\newtheorem{theorem}{Theorem}[section]
\newtheorem{lemma}[theorem]{Lemma}
\newtheorem{corollary}[theorem]{Corollary}
\newtheorem{fact}[theorem]{Fact}
\newtheorem{proposition}[theorem]{Proposition}
\newtheorem{question}[theorem]{Question}
\theoremstyle{definition}
\newtheorem{remark}[theorem]{Remark}
\newtheorem{definition}[theorem]{Definition}
\def\Frac{\operatorname{Frac}}
\def\alg{\operatorname{alg}}
\def\id{\operatorname{id}}
\def\span{\operatorname{span}}
\def\spec{\operatorname{Spec}}
\def\Div{\operatorname{div}}
\begin{document}

\title[Poisson algebras, model theory, differential-algebraic geometry]{Poisson algebras via model theory and differential-algebraic geometry}

\author{Jason Bell}
\address{Jason Bell\\
University of Waterloo\\
Department of Pure Mathematics\\
200 University Avenue West\\
Waterloo, Ontario \  N2L 3G1\\
Canada}
\email{jpbell@uwaterloo.ca}
\author{St\'ephane Launois}
\address{St\'ephane Launois\\
University of Kent\\
School of Mathematics, Statistics, and Actuarial Science \\
Canterbury, Kent \ CT2 7NZ \\
UK}
\email{s.launois@kent.ac.uk}

\author{Omar Le\'on S\'anchez}
\address{Omar Le\'on S\'anchez\\
McMaster University\\
Department of Mathematics and Statistics\\
1280 Main Street West\\
Hamilton, Ontario \  L8S 4L8\\
Canada}
\email{oleonsan@math.mcmaster.ca}

\author{Rahim Moosa}
\address{Rahim Moosa\\
University of Waterloo\\
Department of Pure Mathematics\\
200 University Avenue West\\
Waterloo, Ontario \  N2L 3G1\\
Canada}
\email{rmoosa@uwaterloo.ca}

\thanks{2010 {\em Mathematics Subject Classification}. Primary 17B63; Secondary 03C98, 12H05, 14L99.}

\date{\today}

\begin{abstract}
Brown and Gordon asked whether the Poisson Dixmier-Moeglin equivalence holds for any complex affine Poisson algebra; that is, whether the sets of Poisson rational ideals, Poisson primitive ideals, and Poisson locally closed ideals coincide.
In this article a complete answer is given to this question using techniques from differential-algebraic geometry and model theory.
In particular, it is shown that while the sets of Poisson rational and Poisson primitive ideals do coincide, in every Krull dimension at least four there are complex affine Poisson algebras with Poisson rational ideals that are not Poisson locally closed.
These counterexamples also give rise to counterexamples to the classical (noncommutative) Dixmier-Moeglin equivalence in finite $\operatorname{GK}$ dimension.
A weaker version of the Poisson Dixmier-Moeglin equivalence is proven for all complex affine Poisson algebras, from which it follows that the full equivalence holds in Krull dimension three or less.
Finally, it is shown that everything, except possibly that rationality implies primitivity, can be done over an arbitrary base field of characteristic zero.

\end{abstract}

\maketitle

\tableofcontents

\pagebreak
\section{Introduction}

%Representation theory is widely acknowledged as a central theory in mathematics and beyond.  However, i
\noindent
It is usually difficult to fully classify all the irreducible representations of a given algebra over a field.
As a substitute, one often focuses on the annihilators of the simple (left) modules, the so-called {\it primitive ideals}, which already provide a great deal of information on the representation theory of the algebra.
This idea was successfully developed by Dixmier \cite{Dixmier} and Moeglin \cite{Moeglin} in the case of enveloping algebras of finite-dimensional Lie algebras. In particular, their seminal work shows that primitive ideals can be characterised both topologically and algebraically among the prime ideals, as follows.
Let $A$ be a (possibly noncommutative) noetherian algebra over a field $k$.
If $P \in \mathrm{Spec}(A)$, then the quotient algebra $A/P$ is prime noetherian, and so by Goldie's Theorem (see for instance \cite[Theorem 2.3.6]{McConnellRobson}), we can localize $A/P$ at the set of all regular elements of $A/P$. The resulting algebra, denoted by $\mathrm{Frac}(A/P)$, is simple Artinian.
It follows from the Artin-Wedderburn Theorem that  $\mathrm{Frac}(A/P)$ is isomorphic to a matrix algebra over a division ring $D$. As a consequence, the centre of $\mathrm{Frac}(A/P)$ is isomorphic to the centre of the division ring $D$, and so this is a field extension of the base field $k$. A prime ideal $P \in \mathrm{Spec}(A)$ is \emph{rational} provided the centre of the Goldie quotient ring $\mathrm{Frac}(A/P)$ is algebraic over the base field $k$.
On the other hand, $P$ is said to be \emph{locally closed} if $\{P\}$ is a locally closed point of the prime spectrum $\mathrm{Spec}(A)$ of $A$ endowed with the Zariski topology (which still makes sense in the noncommutative world, see for instance \cite[4.6.14]{McConnellRobson}).
The results of Dixmier and Moeglin show that if $A$ is the enveloping algebra of a finite-dimensional complex Lie algebra, then the notions of primitive, locally closed, and rational coincide.
This result was later extended by Irving and Small to arbitrary base fields of characteristic zero~\cite{IrvingSmall}. 

The spectacular result of Dixmier and Moeglin has primarily led to research in three directions. First, it has been shown that under mild hypotheses, we have the following implications:
$$P \mbox{ locally closed } \Rightarrow \ P \mbox{ primitive } \Rightarrow \ P \mbox{ rational.}$$
Next, examples of algebras where the converse implications are not true were found. More precisely, Irving \cite{Irving} gave an example of a rational ideal which is not primitive, and Lorenz \cite{Lorenz} constructed an example of a primitive ideal which is not locally closed. Finally, the {\it Dixmier-Moeglin equivalence} (that is, the coincidence between the sets of primitive ideals, locally closed ideals and rational ideals) was established for important classes of algebras such as quantised coordinate rings  \cite{HodgesLevasseur1,HodgesLevasseur2,HodgesLevasseurToro,Joseph1,Joseph2,GoodearlLetzter}, twisted coordinate rings \cite{BRS} and Leavitt path algebras \cite{AbramsBellRangaswamy}. 

In the spirit of deformation quantization, the aim of this article is to study an analogue of the Dixmier-Moeglin equivalence for affine (i.e., finitely generated and integral) complex Poisson algebras.
Recall that a complex {\it Poisson algebra} is a commutative $\mathbb C$-algebra $A$ equipped with a Lie bracket $\{-,-\}$ (i.e. $\{ ~,~\}$ is bilinear, skew symmetric and satisfies Jacobi's identity) such that $\{-,x\}$ is a derivation for every $x\in A$, that is, for every $x,y,z\in A$, $\{yz,x\}=y\{z,x\}+z\{y,x\}$.  We point out that the derivations $\{-,x\}$ are trivial on $\mathbb{C}$.  It is natural to consider the Dixmier-Moeglin equivalence in this setting because most of the noncommutative algebras for which the equivalence has been established recently are noncommutative deformations of classical commutative objects. For instance, the quantised coordinate ring $O_q(G)$ of a semisimple complex algebraic group $G$ is a noncommutative deformation of the coordinate ring $O(G)$ of $G$. Moreover, the noncommutative product in $O_q(G)$ gives rise to a Poisson bracket on $O(G)$ via the well known semiclassical limit process (see for instance \cite[Chapter III.5]{BrownGoodearl}). As usual in (algebraic) deformation theory, it is natural to ask how the properties from one world translate into the other.
 
A {\it Poisson ideal} of a Poisson algebra $\big(A,\{-,-\}\big)$ is any ideal $I$ of $A$ such that $\{I,A\} \subseteq I$. A prime ideal which is also a  Poisson ideal 
is called a {\it Poisson prime} ideal. The set of Poisson prime ideals in $A$ forms the {\it
Poisson prime spectrum}, denoted $\mathrm{PSpec} A$, which is given the relative Zariski
topology inherited from $\mathrm{Spec} A$.
In particular, a Poisson prime ideal $P$ is {\em locally closed} if there is a nonzero $f\in A/P$ such that the localisation $(A/P)_f$ has no proper nontrivial Poisson ideals.

For any ideal $J$ of $A$, there is a largest Poisson ideal contained in $J$. This Poisson ideal is called the {\it Poisson core} of $J$. 
Poisson cores of the maximal ideals of $A$ are called {\it Poisson primitive ideals}.  The central role of the Poisson primitive ideals was pinpointed by Brown and Gordon. Indeed, they proved for instance that the defining ideals of the Zariski-closures of the symplectic leaves of a complex affine Poisson variety $V$ are precisely the Poisson-primitive ideals of the coordinate ring of $V$ \cite[Lemma 3.5]{BrownGordon}. The fact that the notion of Poisson primitive ideal is a Poisson analogue of the notion of primitive ideal is supported for instance by the following result due to Dixmier-Conze-Duflo-Rentschler-Mathieu and Borho-Gabriel-Rentschler-Mathieu, and expressed into (Poisson-)ideal-theoretic terms by Goodearl: Let $\mathfrak{g}$ be a solvable finite-dimensional complex Lie algebra. Then there is a homeomorphism between the Poisson primitive ideals of the symmetric algebra $S(\mathfrak{g})$ of $\mathfrak{g}$ (endowed with the Kirillov-Kostant-Souriau bracket) and the primitive ideals of the enveloping 
algebra $U(\mathfrak{g})$ of $\mathfrak{g}$ \cite[Theorem 8.11]{Goodearl2}.

The {\it Poisson center} of $A$ is the subalgebra
$$Z_p(A)= \{z\in A\mid \{z,-\} \equiv 0\}.$$
For any Poisson prime ideal $P$ of $A$, there is an induced Poisson bracket on
$A/P$, which extends uniquely to the quotient field $\mathrm{Frac}(A/P)$. We say that $P$ is {\it Poisson rational} if the
field $Z_p(\mathrm{Frac} (A/P))$ is algebraic over the base field $k$.

By analogy with the Dixmier-Moeglin equivalence for enveloping algebras, we say
that $A$ satisfies the {\it Poisson Dixmier-Moeglin equivalence} provided the following sets of Poisson prime ideals coincide:
\begin{enumerate}
\item The set of Poisson primitive ideals in $A$;
\item The set of Poisson locally closed ideals;
\item The set of Poisson rational ideals of $A$.
\end{enumerate}
If $A$ is an affine  Poisson algebra, then $(2)\subseteq (1)\subseteq (3)$ \cite[1.7(i), 1.10]{Oh}, so the main difficulty is whether $(3)\subseteq (2)$. 

The Poisson Dixmier-Moeglin equivalence has been established for Poisson algebras with suitable torus actions by Goodearl \cite{Goodearl}, so that many Poisson algebras arising as semiclassical limits of quantum algebras satisfy the Poisson Dixmier-Moeglin equivalence \cite{GoodearlLaunois}.
On the other hand, Brown and Gordon proved that the Poisson Dixmier-Moeglin equivalence holds for any affine complex Poisson algebra with only finitely many Poisson primitive ideals \cite[Lemma 3.4]{BrownGordon}.
Given these successes (and the then lack of counterexamples),  Brown and Gordon asked \cite[Question 3.2]{BrownGordon} whether the Poisson Dixmier-Moeglin equivalence holds for all affine complex Poisson algebras.
In this article, we give a complete answer to this question.
We show that $(3)=(1)$ but that $(3)\neq (2)$.
More precisely, we prove that in a finitely generated integral complex Poisson algebra any Poisson rational ideal is Poisson primitive (Theorem \ref{thm: ratprim}), but that for all $d \geq 4$ there exist finitely generated integral complex Poisson algebras of Krull dimension $d$ in which $(0)$ is Poisson rational but not Poisson locally closed (Corollary \ref{counterexample}).

We also prove that the hypothesis $d \geq 4$ is actually necessary to construct counterexamples; our Theorem \ref{lethree} says that the Poisson Dixmier-Moeglin equivalence holds in Krull dimension~$\leq 3$. 
This is deduced from a weak version of the Poisson Dixmier-Moeglin equivalence, where Poisson locally closed ideals are replaced by Poisson prime ideals $P$ such that the set $\mathcal{C}(P):=\{Q \in \mathrm{PSpec} A ~|~ Q\supset P, \mathrm{ht}(Q)=\mathrm{ht}(P)+1 \}$ is finite.
We prove that for any finitely generated integral complex Poisson algebra, a Poisson prime ideal is Poisson primitive if and only if $\mathcal{C}(P)$ is finite.
This is Theorem~\ref{thm: weak} below.

Finally, in Section 8 we show that most of our results extend to arbitrary fields of characteristic zero, and in Section 9 we observe that our results also provide new examples of algebras which do not satisfy the Dixmier-Moeglin equivalence.

What is particularly novel about the approach taken in this paper is that the counterexample comes from differential-algebraic geometry and the model theory of differential fields.
As is explained in Proposition~\ref{lem: pb} below, to a commutative $\mathbb C$-algebra $R$ equipped with a $\mathbb C$-linear derivation $\delta:R\to R$ we can associate a Poisson bracket on $R[t]$ many of whose properties can be read off from $(R,\delta)$.
In particular, $(0)$ will be a rational but not locally closed Poisson ideal of $R[t]$ if and only if the kernel of $\delta$ on $\Frac(R)$ is $\mathbb C$ and the intersection of all the nontrivial prime $\delta$-ideals (i.e., prime ideals preserved by $\delta$) is zero.
As we show in Section~\ref{diffalgsect}, the existence of such an $(R,\delta)$ can be deduced from the model theory of Manin kernels on abelian varieties over function fields, a topic that was at the heart of Hrushovski's model-theoretic proof of the function field Mordell-Lang conjecture~\cite{ML}.
We have written Section~\ref{diffalgsect} to be self-contained, translating as much as possible of the underlying model theory into statements of an algebro-geometric nature so that familiarity with model theory is not required.

Differential algebra is also related to the positive results obtained in this paper.
We prove in Section~\ref{sec: weak} that whenever $(A,\delta_1,\dots.\delta_m)$ is a finitely generated integral complex differential ring (with the derivations not necessarily commuting), if the intersection of the kernels of the derivations extended to the fraction field is $\mathbb C$ then there are only finitely many height one prime ideals preserved by all the derivations (Theorem~\ref{thm: main}).
This theorem, when $m=1$, is a special case of an old unpublished result of Hrushovski generalising a theorem of Jouanolou.
Our weak Dixmier-Moeglin equivalence is proved by applying the above theorem to a Poisson algebra $A$ with derivations given by $\delta_i=\{-,x_i\}$ where $\{x_1,\dots,x_m\}$ is a set of generators of $A$ over $\mathbb C$.

Throughout the remainder of this paper all algebras are assumed to be commutative.
Moreover, by an {\em affine} $k$-algebra we will mean a finitely generated $k$-algebra that is an integral domain.

\medskip
{\em Acknowledgements.}
We would like to thank Zo\'e Chatzidakis, Martin Bays, James Freitag, Dave Marker, Colin Ingalls, Ronnie Nagloo, and Michael Singer for conversations that were useful in the writing of this paper.
The last two authors would also like to thank MSRI for its generous hospitality during the very stimulating ``Model Theory, Arithmetic Geometry and Number Theory" programme of spring~2014, where part of this work was done.

\bigskip
\section{The differential structure on a Poisson algebra}
\label{sect-diffpoi}
\noindent
In this section we discuss the differential structure induced by a Poisson bracket.

Suppose $\big(A,\{-,-\}\big)$ is an affine Poisson $k$-algebra for some field $k$, and $\{x_1,\dots,x_m\}$ is a fixed set of generators.
Let $\delta_1,\dots,\delta_m$ be the operators on $A$ defined by
$$\delta_i(a):=\{a,x_i\}$$
for all $a\in A$.
Then each $\delta_i$ is a $k$-linear derivation on $A$.
Note, however, that these derivations need not commute.
Nevertheless, we consider $(A,\delta_1,\dots,\delta_m)$ as a differential ring, and can speak about differential ideals (i.e., ideals preserved by each of $\delta_1,\dots,\delta_m$), the subring of differential constants (i.e., elements $a\in A$ such that $\delta_i(a)=0$ for $i=1,\dots,m$), and so on.
There is a strong connection between the Poisson and differential structures on $A$.
For example, one checks easily, using that $\{a,-\}$ is also a derivation on $A$, that
\begin{itemize}
\item[(i)]
{\em an ideal of $A$ is a Poisson ideal if and only if it is a differential ideal.}
\end{itemize}
From this we get, more or less immediately, the following differential characterisations of when a prime Poisson ideal $P$ of $A$ is locally closed and primitive:
\begin{itemize}
\item[(ii)]
{\em $P$ is locally closed if and only if the intersection of all the non-zero prime differential ideals in $A/P$ is not trivial.}
\item[(iii)]
{\em $P$ is primitive if and only if there is a maximal ideal in $A/P$ that does not contain any nontrivial differential ideals.}
\end{itemize}
To characterise rationality we should extend to $F=\Frac(A)$, in the canonical way, both the Poisson and differential structures on $A$.
It is then not difficult to see that the Poisson center of $\big(F,\{-,-\}\big)$ is precisely the subfield of differential constants in $(F,\delta_1,\dots,\delta_m)$.
In particular,
\begin{itemize}
\item[(iv)]
{\em a prime Poisson ideal in $A$ is rational if and only if the field of differential constants of $\Frac(A/P)$ is algebraic over $k$.}
\end{itemize}

\begin{remark}
The above characterisations are already rather suggestive to those familiar with the model theory of differentially closed fields; for example,
locally closed corresponds to the generic type of a differential variety being isolated, and rationality corresponds to that generic type being weakly orthogonal to the constants.
Note however that the context here is several derivations that may not commute.
In order to realise the model-theoretic intuition, therefore, something must be done.
One possibility is to work with the model theory of partial differential fields where the derivations need not commute.
Such a theory exists and is tame; for example, it is an instance of the formalism worked out in~\cite{moosa-scanlon}.
On the other hand, in this case one can use a trick of Cassidy and Kolchin (pointed out to us by Michael Singer) to pass to a commuting context after replacing the derivations by certain $F$-linear combinations of them.
Indeed, if $p_{i,j}\in \mathbb C[t_1,\ldots,t_m]$ is such that $\{x_j,x_i\}=p_{i,j}(x_1,\ldots,x_m)$ for all $i,j=1,\ldots,m$, then an easy computation (using the Jacobi identity) yields
$\displaystyle [\delta_i,\delta_j]=\sum_{k=1}^m \frac{\partial p_{i,j}}{\partial t_k}(x_1,\ldots,x_m)\delta_k.$
Thus, the $F$-linear span of the derivations $\{\delta_1,\ldots,\delta_m\}$ has the additional structure of a Lie ring.
It follows by Lemma 2.2 of~\cite{Singer} that this space of derivations has an $F$-basis consisting of \emph{commuting} derivations (see also~\cite[Chapter~9, \S5, Proposition~6]{KolchinDAG}), and one could work instead with those derivations.
But in fact we do not pursue either of these directions.
Instead, for the positive results of this paper we give algebraic proofs of whatever is needed about rings with  possibly non-commuting derivations and avoid any explicit use of model theory whatsoever.
For the negative results we associate to an ordinary differential ring a Poisson algebra of one higher Krull dimension (see Proposition~\ref{lem: pb}), and then use the model theory of ordinary differentially closed fields to build counterexamples in Poisson algebra.
\end{remark}

The following well known prime decomposition theorem for Poisson ideals can be seen as an illustration of how the differential structure on a Poisson algebra can be useful.

\begin{lemma}
\label{lem: decomp} Let $k$ be a field of characteristic zero. 
If $I$ is a Poisson ideal in an affine Poisson $k$-algebra $A$, then the radical of $I$ and all the minimal prime ideals over $I$ are Poisson.
\end{lemma}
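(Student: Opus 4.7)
The plan is to reduce the statement to the corresponding fact about differential ideals and then invoke standard differential-algebraic arguments. By observation (i) of this section, once we fix a generating set $\{x_1,\dots,x_m\}$ for $A$ and the associated $k$-linear derivations $\delta_i = \{-,x_i\}$, an ideal of $A$ is Poisson if and only if it is stable under each $\delta_i$. So it suffices to prove the following two statements in characteristic zero: (a)~the radical of any differential ideal in a Noetherian $k$-algebra with derivations $\delta_1,\dots,\delta_m$ is again a differential ideal; and (b)~every minimal prime over a differential ideal is a differential ideal. Since the intersection of differential ideals is differential, and each $\delta_i$ can be handled independently, both (a) and (b) reduce to the case of a single derivation $\delta$.

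For (a), the plan is the classical computation: given $a \in \sqrt{I}$, pick $n$ with $a^n \in I$ and apply $\delta$ to $a^n$, using $\delta(a^n) = n a^{n-1}\delta(a) \in I$. Since $k$ has characteristic zero we may divide by $n$, and a short induction — alternately applying $\delta$ and multiplying by $\delta(a)$ — produces a power of $\delta(a)$ lying in $I$, so $\delta(a) \in \sqrt{I}$. This is where the characteristic zero hypothesis is essential.

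For (b), having established (a), we may replace $I$ by $\sqrt{I}$ and assume $I = P_1 \cap \cdots \cap P_r$ with the $P_j$ the distinct minimal primes over $I$. Fix $P = P_j$ and $a \in P$. By prime avoidance choose $s \in \bigcap_{l \neq j} P_l$ with $s \notin P$. Then $sa \in P_1 \cap \cdots \cap P_r = I$, which is differential, so $\delta(sa) = \delta(s)a + s\delta(a) \in I \subseteq P$. Since $\delta(s)a \in P$, we conclude $s\delta(a) \in P$, and as $P$ is prime with $s \notin P$, this forces $\delta(a) \in P$. Thus each $P_j$ is differential, hence Poisson.

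There is no real obstacle: both steps are textbook differential algebra, and the only ingredient specific to the Poisson setting is the translation (i) already recorded in this section. The one point that deserves care is the characteristic zero hypothesis in step (a), without which the radical of a differential ideal need not be differential.
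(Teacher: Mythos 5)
Your proposal is correct. The paper uses the same initial reduction to the differential setting (observation~(i)) but then simply cites Dixmier~\cite[Lemma~3.3.3]{Dixmierbook} for the differential statement, whereas you supply the standard argument in full. Your two steps are exactly the classical ones: the induction in~(a) (showing by alternately applying $\delta$ and multiplying by $\delta(a)$ that $a^{n-k}\delta(a)^{2k-1}\in I$ for $k=1,\dots,n$, so that $\delta(a)^{2n-1}\in I$) is the usual characteristic-zero proof, and the argument in~(b) via an element $s\in\bigcap_{l\neq j}P_l\setminus P_j$ is the usual one as well. Two small remarks: what you invoke to produce $s$ is not really ``prime avoidance'' but rather incomparability of minimal primes plus primeness of $P_j$ (if $\bigcap_{l\neq j}P_l\subseteq P_j$ then some $P_l\subseteq P_j$); and the observation that intersections of differential ideals are differential is not actually what reduces the problem to a single derivation --- that reduction just uses that the radical and the minimal primes do not depend on the derivation, while an ideal is $(\delta_1,\dots,\delta_m)$-differential iff it is $\delta_i$-differential for each $i$. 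Neither point affects correctness.
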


\begin{proof}
Because of~(i) it suffices to prove the lemma with ``differential'' in place of ``Poisson''.  This result can be found in Dixmier \cite[Lemma 3.3.3]{Dixmierbook}.
\end{proof}

\bigskip
\section{Rational implies Primitive}
\noindent
In order to prove that Poisson rational implies Poisson primitive in affine complex Poisson algebras, we will make use of the differential-algebraic fact expressed in the following lemma.
This is our primary method for producing new constants in differential rings, and will be used again in Section~\ref{sec:jouanolou}.

\begin{lemma}
\label{lem: diffcenter}
 Let $k$ be a field and $A$ an integral $k$-algebra equipped with $k$-linear derivations $\delta_1,\dots,\delta_m$.
Suppose that there is a finite-dimensional $k$-vector subspace $V$ of $A$ and a set $\mathcal{S}$ of ideals satisfying:
\begin{itemize}
\item[(i)]
$\delta_i(I)\subseteq I$ for all $i=1,\dots,m$ and $I\in\mathcal S$,
\item[(ii)]
$\bigcap\mathcal{S} = (0)$, and
\item[(iii)]
 $V\cap I\neq (0)$ for all $I\in \mathcal{S}$.
\end{itemize}
Then there exists $f\in\Frac(A)\setminus k$ with $\delta_i(f)=0$ for all $i=1,\dots,m$.
\end{lemma}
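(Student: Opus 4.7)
The central tool is the bilinear Wronskian $\beta_i(u,v) := u\delta_i(v) - v\delta_i(u)$. For $k$-linearly independent $u, v \in A$ with $v \neq 0$, the ratio $u/v \in \Frac(A)\setminus k$ is a common $\delta_i$-constant precisely when $\beta_i(u,v) = 0$ for every $i$. Since each $I \in \mathcal{S}$ is $\delta_i$-stable, $u \in I$ forces $\beta_i(u,v) \in I$ for any $v$. Hence the lemma reduces to finding $k$-linearly independent $u, v \in V$ such that every $I \in \mathcal{S}$ contains at least one of them: then $\beta_i(u,v) \in \bigcap_{I \in \mathcal{S}} I = (0)$ by (ii), and $f := u/v$ is the desired constant.

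To construct such a covering pair, enlarge $\mathcal{S}$ to its closure $\mathcal{S}^*$ under those finite intersections $J$ satisfying $V \cap J \neq (0)$, preserving (i)--(iii). Let $d := \min\{\dim_k(V\cap I) : I \in \mathcal{S}^*\} \geq 1$, fix $I_0 \in \mathcal{S}^*$ achieving the minimum, and set $W_0 := V \cap I_0$. The key dichotomy is that for each $I \in \mathcal{S}^*$, either $W_0 \subseteq I$ or $W_0 \cap I = (0)$. Indeed, $V \cap (I \cap I_0) \subseteq W_0$; if nonzero, then $I \cap I_0 \in \mathcal{S}^*$ by closure, so minimality of $d$ forces $V \cap (I \cap I_0) = W_0$, whence $W_0 \subseteq I$. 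This partitions $\mathcal{S}^* = \mathcal{T}^1 \sqcup \mathcal{T}^0$, with $\mathcal{T}^0$ nonempty (else $W_0 \subseteq \bigcap \mathcal{S}^* = (0)$). For any nonzero $u \in W_0$, one has $u \in I$ precisely when $I \in \mathcal{T}^1$, so the task becomes to find $v \in V$, $k$-independent of $u$, with $v \in I$ for all $I \in \mathcal{T}^0$.

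If $V \cap \bigcap \mathcal{T}^0 \neq (0)$, any nonzero $v$ in this subspace automatically works (independence from $u$ follows from $\bigcap \mathcal{T}^0 \cap W_0 \subseteq \bigcap \mathcal{S}^* = (0)$). Otherwise there are two subcases. If $\bigcap \mathcal{T}^0 = (0)$, then $(A, V, \mathcal{T}^0)$ again satisfies the hypotheses and one applies the lemma recursively; this recursion terminates because successive choices of $W_0$ produce pairwise disjoint subspaces of the finite-dimensional $V$. If $J := \bigcap \mathcal{T}^0 \neq (0)$, pass to an integral quotient: invoking Lemma~\ref{lem: decomp}, choose a suitable minimal differential prime $P \supseteq J$ and apply the lemma inductively to $(A/P, V, \overline{\mathcal{T}^0})$ to obtain a nonzero constant $\overline{g}/\overline{h} \in \Frac(A/P) \setminus k$. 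A lift $g/h \in \Frac(A)$ is then the desired common constant via the identity
$$\beta_i(ug, uh) = u^2\,\beta_i(g,h),$$
combined with $\beta_i(g,h) \in P \subseteq I$ for $I \in \mathcal{T}^0$ and $u^2 \in I$ for $I \in \mathcal{T}^1$; so $\beta_i(ug, uh) \in \bigcap \mathcal{S}^* = (0)$, and $g/h \notin k$ because $\overline{g}/\overline{h} \notin k$.

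The main obstacle is this last subcase---namely, selecting the prime $P$ so that both $V \cap P = (0)$ and $P \subseteq I$ for every $I \in \mathcal{T}^0$, and setting up the induction on $A/P$ so the passage does not merely cycle. Once this bookkeeping is handled, the identity $\beta_i(ug,uh) = u^2\beta_i(g,h)$ is the algebraic device that stitches the $\mathcal{T}^0$ and $\mathcal{T}^1$ halves of the partition together into a single common constant.
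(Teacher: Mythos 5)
Your core algebraic observation is the right one: the Wronskian $\beta_i(u,v)=u\delta_i(v)-v\delta_i(u)$ lands in any $\delta_i$-stable ideal containing $u$ or $v$, and $u/v$ is a common constant iff all $\beta_i(u,v)$ vanish. The paper's proof runs on exactly this engine (it computes $\delta(v)v_d-v\delta(v_d)$). But the way you try to close the argument has genuine gaps, and the bookkeeping does not go through.

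First, the opening reduction --- find linearly independent $u,v\in V$ such that \emph{every} $I\in\mathcal S$ contains $u$ or $v$ --- is sufficient but not necessary, and in general such a pair simply does not exist. For instance, if $V$ is three-dimensional with basis $v_1,v_2,v_3$ and $\mathcal S$ contains ideals $I_1,I_2,I_3$ with $V\cap I_j=kv_j$, then no two elements of $V$ can cover all three. Your subcases are meant to handle this, but neither closes. In the subcase $J:=\bigcap\mathcal T^0\neq(0)$, the verification needs $\beta_i(g,h)\in P\subseteq I$ for every $I\in\mathcal T^0$, i.e.\ $P\subseteq\bigcap\mathcal T^0=J$; combined with $P\supseteq J$ this forces $P=J$, and $J$ --- an intersection of arbitrary differential ideals in $\mathcal S$, which are not assumed prime or even radical --- will generally not be prime, so no such $P$ exists. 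You explicitly flag this as ``the main obstacle,'' and I do not see how to repair it within this framework (for instance, handling each minimal prime over $J$ separately does not stitch back into a single Wronskian identity). In the subcase $\bigcap\mathcal T^0=(0)$, the claimed termination is also unjustified: successive $W_0$'s having pairwise trivial intersection does not bound their number, since a finite-dimensional space admits infinitely many pairwise trivially-intersecting subspaces (e.g.\ infinitely many distinct lines in $k^2$); so nothing prevents this branch from recursing forever. A concrete instance where the recursion spins: all $\delta_i\equiv 0$ on $A=k[x,y]$ with $V=\mathrm{span}_k(x,y)$ and $\mathcal S$ infinitely many height-one primes meeting $V$ in distinct lines with trivial intersection --- here any linearly independent pair is a constant, yet your algorithm falls into the $\bigcap\mathcal T^0=(0)$ branch indefinitely and never finds one.

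The paper avoids all of this by inducting on $\dim V$: given a basis $v_1,\dots,v_d$, it picks $\delta$ with some $\delta(v_j/v_d)\neq 0$, sets $u_j=v_d^2\,\delta(v_j/v_d)$, and uses the Wronskian computation $\delta(v)v_d-v\delta(v_d)=\sum c_ju_j\in I$ to show that the ideals in $\mathcal T:=\{I\in\mathcal S: I\cap\mathrm{span}(u_1,\dots,u_{d-1})=(0)\}$ meet a \emph{strictly smaller} subspace $W\subsetneq V$; it then recurses on $W$ (or, in the complementary case, on $\mathrm{span}(u_1,\dots,u_{d-1})$, also of smaller dimension). Because the ambient subspace strictly shrinks at every step, termination is automatic, and no passage to a quotient (with its prime-selection difficulties) is needed.
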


\begin{proof}
We proceed by induction on $d=\dim V$.
The case of $\dim V=1$ is vacuous as then assumptions~(ii) and~(iii) are inconsistent.
Suppose that $d>1$ and fix a basis $\{v_1,\dots,v_d\}$ of $V$.
Let $\delta\in\{\delta_1,\dots,\delta_m\}$ be such that not all of $\delta\big(\frac{v_1}{v_d}\big),\dots,\delta\big(\frac{v_{d-1}}{v_d}\big)$ are zero.
If this were not possible, then each $\frac{v_j}{v_d}$ would witness the truth of the lemma and we would be done.
Letting
$u_j:=v_d^2\delta\big(\frac{v_j}{v_d}\big)$ for $j=1,\dots,d-1$,
we have that not all $u_1,\dots,u_{d-1}$ are zero.
It follows that
$$L:=\{(c_1,\ldots ,c_{d-1})\in k^{d-1}~:~\sum_{j=1}^{d-1} c_j  u_j \ = \ 0\}$$
is a proper subspace of $k^{d-1}$, and hence
$$W:=kv_d+\left\{\sum_{j=1}^{d-1} c_j v_j~:~(c_1,\ldots ,c_{d-1})\in L\right\}$$
is a proper subspace of $V$.

We prove the lemma by applying the induction hypothesis to $W$ with
$$\mathcal T:=\{I\in\mathcal S:I\cap \span_{k}\big(u_1,\dots,u_{d-1}\big)=(0)\}.$$
We only need to verify the assumptions.
Assumption~(i) holds {\em a fortiori} of $\mathcal T$.

Toward assumption~(ii), note that if $\bigcap(\mathcal S\setminus\mathcal T)=(0)$, then we are done by the induction hypothesis applied to $\span_{k}\big(u_1,\dots,u_{d-1}\big)$ with $\mathcal S\setminus\mathcal T$.
Hence we may assume that  $\bigcap(\mathcal S\setminus\mathcal T)\neq (0)$, and so as $A$ is an integral domain $\bigcap\mathcal T=(0)$.

It remains to check assumption~(iii): we claim that for each $I\in\mathcal T$, $W\cap I\neq(0)$.
Indeed, since $I\cap V\neq (0)$, we have in $I$ a nonzero element of the form $\displaystyle v:=\sum_{j=1}^{d} c_jv_j$.
As $I$ is preserved by $\delta$ we get
\begin{eqnarray*}
\delta(v)v_d - v\delta(v_d) &=& v_d^2\delta(\frac{v}{v_d})\\
&=&\sum_{j=1}^{d-1} c_ju_j
\end{eqnarray*}
is also in $I$.
But as $I\cap\span_{k}\big(u_1,\dots,u_{d-1}\big)=(0)$ by choice of $\mathcal T$, we must have $\displaystyle \sum_{j=1}^{d-1} c_ju_j=0$.
Hence $(c_1,\dots,c_{d-1})\in L$ and $v\in W$ by definition.
\end{proof}

We now prove that rational implies primitive.
Note that the converse is well known \cite[1.7(i), 1.10]{Oh}.

\begin{theorem} 
\label{rationalimpliesprimitive}
Let $A$ be a complex affine Poisson algebra and $P$ a Poisson prime ideal of $A$.
If $P$ is Poisson rational then $P$ is Poisson primitive.  
\label{thm: ratprim}
\end{theorem}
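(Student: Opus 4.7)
The plan is to pass to $A/P$ so that we may assume $P=(0)$, and then to produce a maximal ideal $\mathfrak m$ of $A$ whose Poisson core $P(\mathfrak m)$ is trivial; by characterisation~(iii) of Section~\ref{sect-diffpoi}, this is precisely the statement that $(0)$ is Poisson primitive. Suppose for contradiction that $P(\mathfrak m)\neq(0)$ for every maximal $\mathfrak m$. Since $A$ is Jacobson and $P(\mathfrak m)\subseteq \mathfrak m$, the family $\mathcal S=\{P(\mathfrak m):\mathfrak m\in\mathrm{Max}(A)\}$ of differential ideals has $\bigcap\mathcal S=(0)$. The strategy is to apply Lemma~\ref{lem: diffcenter} to $\mathcal S$: if one can exhibit a finite-dimensional $\mathbb C$-subspace $V\subseteq A$ meeting every $P(\mathfrak m)$ nontrivially, the conclusion of Lemma~\ref{lem: diffcenter} produces a nonconstant element of the Poisson centre of $\Frac(A)$, contradicting rationality.

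To construct $V$, I would invoke the classical fact that, in characteristic zero, the largest differential ideal inside a prime ideal is itself prime---the Dixmier result used in the proof of Lemma~\ref{lem: decomp}. Hence each $P(\mathfrak m)$ is a Poisson prime, and being nonzero by assumption, contains some minimal nonzero Poisson prime of $A$. It would therefore suffice to show that $A$ has only finitely many minimal nonzero Poisson primes $Q_1,\ldots,Q_r$: picking $0\neq q_i\in Q_i$, the space $V=\span_{\mathbb C}(q_1,\ldots,q_r)$ then meets every $P(\mathfrak m)$.

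Establishing this finiteness under the rationality hypothesis is the main obstacle. My plan here begins with a Noetherian pigeonhole argument. Supposing infinitely many minimal nonzero Poisson primes $Q_1,Q_2,\ldots$, one first uses Lemma~\ref{lem: decomp} and the finiteness of the set of minimal primes over any ideal to show that $\bigcap_i Q_i=(0)$: otherwise some Poisson minimal prime above this nonzero intersection would sit inside infinitely many $Q_i$, contradicting their minimality among nonzero Poisson primes. The harder step is then to extract a contradiction with rationality from the existence of infinitely many such primes with vanishing intersection. Here I expect the uncountability of $\mathbb C$ to be essential---consistent with the paper's remark that rational-implies-primitive is the one part of the Poisson Dixmier--Moeglin equivalence which may genuinely require an uncountable base field---realised by showing that the corresponding proper Zariski-closed subvarieties $V(Q_i)\cap\mathrm{Max}(A)$ cannot collectively cover the uncountably many $\mathbb C$-points of $\mathrm{Max}(A)$ once an appropriate countable parametrisation of the family $\{Q_i\}$ is in place; this would then contradict the assumption that every maximal ideal carries a nontrivial Poisson core.
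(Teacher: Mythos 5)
Your high-level plan---reduce to $P=(0)$, use that Poisson cores of primes are prime, apply Lemma~\ref{lem: diffcenter} together with the uncountability of $\mathbb C$ to find a maximal ideal with trivial Poisson core---is close in spirit to the paper's. But there is a genuine gap at precisely the step you flag as the main obstacle, and the pigeonhole observation you sketch does not fill it. You want a single finite-dimensional $V$ meeting every member of $\mathcal{S}$, the set of minimal nonzero Poisson primes, and propose to get it by showing $\mathcal{S}$ is finite. That finiteness is never established in the paper, is not needed, and is probably not true in general (the finiteness results of Section~\ref{sec:jouanolou} concern height one primes only, and a minimal nonzero Poisson prime need not have height one). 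Your plan then silently shifts to ``a countable parametrisation of $\{Q_i\}$'' for the covering argument, but offers no route to countability either; Lemma~\ref{lem: diffcenter} with a single $V$ presupposes the very control over $\mathcal{S}$ that you are trying to obtain.

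What is missing is the paper's key device: filter $\mathcal{S}$ by the finite-dimensional pieces of $A$. Fix a finite-dimensional generating subspace $V\ni 1$, so $A=\bigcup_n V^n$, and set $\mathcal{S}_n:=\{Q\in\mathcal{S}:Q\cap V^n\neq(0)\}$. Apply Lemma~\ref{lem: diffcenter} not to $\mathcal{S}$ with some unknown $V$ but \emph{to each slice $\mathcal{S}_n$ with the space $V^n$}, which by construction meets every ideal in $\mathcal{S}_n$: if $\bigcap\mathcal{S}_n$ were zero, the lemma would produce a nonconstant differential constant in $\Frac(A)$, contradicting rationality. So $L_n:=\bigcap\mathcal{S}_n$ is nonzero, it is a radical Poisson ideal, its minimal primes are Poisson by Lemma~\ref{lem: decomp}, and the minimality of the members of $\mathcal{S}$ forces $\mathcal{S}_n$ to lie among those finitely many minimal primes. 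Hence each $\mathcal{S}_n$ is finite and $\mathcal{S}=\bigcup_n\mathcal{S}_n$ is countable. After that your closing idea---countably many proper Zariski-closed sets cannot cover the $\mathbb C$-points of $\operatorname{Max}(A)$, so some maximal ideal avoids every $Q\in\mathcal{S}$ and thus has trivial Poisson core---is a perfectly acceptable finish, morally equivalent to the paper's localization-plus-Nullstellensatz argument. Without the slicing, though, you cannot get started.
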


\begin{proof}
By replacing $A$ by $A/P$ if necessary, we may assume that $P=(0)$. 
Let $\mathcal{S}$ denote the set of nonzero Poisson prime ideals $Q$ such that $Q$ does not properly contain a nonzero Poisson prime ideal of $A$.  We claim that $\mathcal{S}$ is countable.  To see this, let $V$ be a finite-dimensional subspace of $A$ that contains $1$ and contains a set of generators for $A$.  We then let $V^n$ denote the span of all products of elements of $V$ of length at most $n$.  By assumption, we have
$$A \ = \ \bigcup_{n\ge 0} V^n$$ and in particular every nonzero ideal of $A$ intersects $V^n$ nontrivially for $n$ sufficiently large.

We claim first that $$\mathcal{S}_n:=\{ Q\in \mathcal{S}\colon Q\cap V^n\neq (0)\}$$ 
has nontrivial intersection.
Toward a contradiction, suppose $\bigcap\mathcal S_n=(0)$.
Fixing generators $\{x_1,\dots,x_m\}$ of $A$ over $\mathbb C$, let $\delta_i$ be the derivation given by $\{-,x_i\}$ for $i=1,\dots,m$.
Since the ideals in $\mathcal S_n$ are Poisson, they are differential.
The assumptions of Lemma~\ref{lem: diffcenter} are thus satisfied, and we have $f\in\Frac(A)\setminus\mathbb C$ with $\delta_i(f)=0$ for all $i=1,\dots,m$.
This contradicts the Poisson rationality of $(0)$ in $A$, see statement~(iv) of Section~\ref{sect-diffpoi}.
Hence
$\displaystyle L_n\ := \ \bigcap_{Q\in \mathcal{S}_n} Q$ is nonzero.

Next we claim that $\mathcal S_n$ is finite.
Since $A$ is a finitely generated integral domain and $L_n$ is a nonzero radical Poisson ideal, we have by Lemma~\ref{lem: decomp} that in the prime decomposition $L_n = P_1\cap \cdots \cap P_m$ each $P_i$ is a nonzero prime Poisson ideal.
As each ideal in $\mathcal{S}_n$ is prime and contains $L_n$, and hence also some $P_i$, 
it follows by choice of $\mathcal S$ that $\mathcal{S}_n\subseteq \{P_1,\ldots ,P_m\}$.

So $\displaystyle \mathcal{S}=\bigcup_{n\geq 0}\mathcal S_n$ is countable.   We let $Q_1,Q_2,\ldots $ be an enumeration of the elements of $\mathcal{S}$. 
For each $i$, there is some nonzero $f_i\in Q_i$.
We let $T$ denote the countable multiplicatively closed set generated by the $f_i$.  Then $B:=T^{-1}A$ is a countably generated complex algebra.  It follows that $B$ satisfies the Nullstellensatz \cite[II.7.16]{BrownGoodearl} and since $\mathbb{C}$ is algebraically closed, we then have that $B/I$ is $\mathbb{C}$ for every maximal ideal $I$ of $B$.
If $I$ is a maximal ideal of $B$ and $J:=I\cap A$ then $A/J$ embeds in $B/I$, and hence $A/J\cong \mathbb{C}$ and so $J$ is a maximal ideal of $A$.  By construction, $J$ does not contain any ideal in $\mathcal S$, and so $(0)$ is the largest Poisson ideal contained in $J$.
That is, $(0)$ is Poisson primitive, as desired.  
\end{proof}

We note that this proof only requires the uncountability of $\mathbb{C}$; it works over any uncountable base field $k$.  If one follows the proof, we cannot in general ensure that $B/I\cong k$, but we have that it is an algebraic extension of $k$ since $B$ still satisfies the Nullstellensatz.
We then have that $A/J$ embeds in an algebraic extension of $k$ and thus it too is an algebraic extension of $k$ and we obtain the desired result.

\section{A differential-algebraic example}
\label{diffalgsect}
\noindent
Our goal in this section is to prove the following theorem.

\begin{theorem}
\label{diffex}
There exists a complex affine algebra $R$ equipped with a derivation $\delta$ such that
\begin{itemize}
\item[(i)]
the field of constants of $\big(\Frac(R),\delta\big)$ is $\mathbb C$, and
\item[(ii)]
the intersection of all nontrivial prime differential ideals of $R$ is zero.
\end{itemize}
In fact, such an example can be found of any Krull dimension $\geq 3$.
\end{theorem}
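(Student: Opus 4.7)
My approach is to build $R$ as an affine chart in the total space of an abelian fibration whose Manin kernel is orthogonal to the constants. Fix $d\geq 3$ and set $g:=d-1$. Let $A$ be a simple abelian variety of dimension $g$ defined over $K=\mathbb{C}(t)$ with trivial $\mathbb{C}$-trace---for instance, the Jacobian of a sufficiently general genus-$g$ curve over $\mathbb{C}(t)$---and let $\mathcal{A}\to \mathbb{A}^1_\mathbb{C}$ be a smooth model of the family over the good-reduction locus. Choose a dense affine open $U\subset \mathcal{A}$ and put $R:=\mathcal{O}(U)$, a finitely generated integral $\mathbb{C}$-algebra of Krull dimension $d$. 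Manin's construction furnishes a differential algebraic subgroup $A^\sharp\subset A$ cut out by first-order polynomial ODEs over $K$ in fibre coordinates; writing these as rules for $\delta x_1,\ldots,\delta x_g$ together with $\delta t=1$ defines a $\mathbb{C}$-linear derivation $\delta$ on $R$ (after a further harmless localization to clear denominators).

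\textbf{Verifying (i).} The hypothesis that $A$ has trivial $\mathbb{C}$-trace is exactly what is needed for Hrushovski's theorem that $A^\sharp$ is orthogonal to the field of constants in $\mathrm{DCF}_0$. I would unpack this model-theoretic orthogonality into the purely algebraic statement that the field of $\delta$-constants of $\Frac(R)=\mathbb{C}(t)(A)$ is just $\mathbb{C}$, which is precisely~(i).

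\textbf{Verifying (ii).} By a theorem of Manin, every torsion point of $A(\bar K)$ lies in $A^\sharp(\bar K)$. A torsion point $\tau$, defined over some finite extension of $K$, determines after base change a section $\sigma_\tau$ of $\mathcal{A}\to\mathbb{A}^1$; taking its Galois orbit in $\mathcal{A}$ yields a closed integral subvariety $C_\tau\subset U$ of codimension $g$, with defining prime $I_\tau\subset R$. Since $\tau\in A^\sharp$, the section $\sigma_\tau$ satisfies the Manin equations, whence $\delta(I_\tau)\subseteq I_\tau$. As $\tau$ ranges over all torsion, the curves $C_\tau$ cover a Zariski-dense subset of $U$ (their fibrewise images recover the dense torsion of each smooth fibre), and therefore $\bigcap_\tau I_\tau=(0)$, giving~(ii). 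Note that Theorem~\ref{thm: main} will force the set of height-one prime differential ideals of $R$ to be finite, so the nontrivial intersection that achieves~(ii) necessarily comes from higher-codimension subvarieties such as these torsion sections; this is exactly why $g\geq 2$, i.e.\ $d\geq 3$, is needed.

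\textbf{Main obstacle.} The crux---and essentially the only reason model theory enters this section---is the orthogonality of $A^\sharp$ to the constants. For $g=1$ this is Hrushovski's theorem that the Manin kernel of an elliptic curve with non-constant $j$-invariant is strongly minimal and orthogonal to constants, established via the Zilber trichotomy for strongly minimal sets in $\mathrm{DCF}_0$; the higher-dimensional version depends on the analysis of finite-rank definable groups from the function-field Mordell--Lang paper. Manin's classical theorem that torsion lies in $A^\sharp$ and the Zariski density of torsion are by comparison routine, so once orthogonality has been translated into the required statement about differential constants, the verification of~(ii) is essentially bookkeeping.
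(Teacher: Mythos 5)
There is a genuine gap in your construction of the derivation $\delta$, namely in the claim that the Manin kernel $A^\sharp\subset A$ is ``cut out by first-order polynomial ODEs in fibre coordinates'' that can be rewritten as rules $\delta x_i=f_i(x,t)$. This is false when $A$ is non-isotrivial: the Manin map has order two, so $A^\sharp$ is cut out by second-order equations. More to the point, rewriting the defining equations as $\delta x_i = f_i$ would amount to giving $A$ a $D$-variety structure on which the induced Kolchin-closed set $(A,s)^\sharp$ is the subgroup $A^\sharp$, which would force $s$ to be a $D$-group structure on $A$; but a simple abelian variety admits a $D$-group structure if and only if it descends to the constants (see Remark~\ref{nadgroup} and Bertrand--Pillay), which is exactly the case you have excluded by taking trivial $\mathbb C$-trace. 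So there is no derivation on $\mathcal O(U)$, $U\subset\mathcal A$, whose $\delta$-subvarieties are the torsion sections in the way you intend. The paper avoids this by replacing $A$ with a quotient $V$ of the universal vectorial extension $\widehat A$: the $D$-group structure on $\widehat A$ is canonical and order one, it descends to $V$, and the projection $V\to A$ induces a $\delta$-rational isomorphism $(V,\overline s)^\sharp\to A^\sharp$, so that the torsion argument you sketch can be carried out on $V$ rather than on $A$.

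Once this is fixed, your dimension count also needs adjusting. With the correct object one gets $\mathrm{Kdim}\,R=\dim V+\operatorname{trdeg}(k/\mathbb C)$, and $\dim V$ lies strictly between $\dim A$ and $2\dim A$ for non-isotrivial simple $A$ (for a non-isotrivial elliptic curve, $\dim V=2$). So your formula $d=g+1$ does not hold, and varying $g$ with $k=\mathbb C(t)$ fixed does not realize every $d\geq 3$. The paper instead keeps $A$ an elliptic curve, so $\dim V=2$, and varies the transcendence degree of the base differential field $k$ over $\mathbb C$, which yields every Krull dimension $\geq 3$. Your plan for part (ii), spreading out torsion sections and noting their fibrewise Zariski density, is essentially the same as the paper's once transported to $V$, so that part of the argument is sound in spirit.
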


To the reader sufficiently familiar with the model theory of differentially closed fields, this theorem should not be very surprising: the $\delta$-ring $R$ that we will produce will be the coordinate ring of a $D$-variety that is related to the Manin kernel of a simple nonisotrivial abelian variety defined over a function field over $\mathbb C$.
We will attempt, however, to be as self-contained and concrete in our construction as possible.
We will at times be forced to rely on results from model theory for which we will give references from the literature.
We begin with some preliminaries on differential-algebraic geometry.

\medskip
\subsection{Prolongations, $D$-varieties, and finitely generated $\delta$-algebras}
\label{dvarieties}
Let us fix a differential field $(k,\delta)$ of characteristic zero.
Suppose that $V\subseteq\mathbb A^n$ is an irreducible affine algebraic variety over $k$.
Then by the {\em prolongation} of $V$ is meant the algebraic variety $\tau V\subseteq \mathbb A^{2n}$ over $k$ whose defining equations are
\begin{eqnarray*}
P(X_1,\dots,X_n)&=&0\\
P^\delta(X_1,\dots,X_n)+\sum_{i=1}^n\frac{\partial P}{\partial X_i}(X_1,\dots,X_n)\cdot Y_i&=&0
\end{eqnarray*}
for each $P\in I(V)\subset k[X_1,\dots,X_n]$.
Here $P^\delta$ denotes the polynomial obtained by applying $\delta$ to all the coefficients of $P$.
The projection onto the first $n$ coordinates gives us a surjective morphism $\pi:\tau V\to V$.
Note that if $a\in V(K)$ is any point of $V$ in any differential field extension $(K,\delta)$ of $(k,\delta)$, then $\nabla(a):=(a,\delta a)\in\tau V(K)$.

If $\delta$ is trivial on $k$ then $\tau V$ is nothing other than $TV$, the usual tangent bundle of $V$.
In fact, this is the case as long as $V$ is defined over the constant field of $(k,\delta)$ because in the defining equations for $\tau V$ given above we could have restricted ourselves to polynomials $P$ coming from $I(V)\cap F[X_1,\dots,X_n]$ for any field of definition $F$ over $V$.
In general, $\tau V$ will be a torsor for the tangent bundle; for each $a\in V$ the fibre $\tau_a V$ is an affine translate of the tangent space $T_aV$.

Taking prolongations is a functor which acts on morphisms $f:V\to W$ by acting on their graphs.
That is, $\tau f:\tau V\to \tau W$ is the morphism whose graph is the prolongation of the graph of $f$, under the canonical identification of $\tau(V\times W)$ with $\tau V\times\tau W$.

We have restricted our attention here to the affine case merely for concreteness.
The prolongation construction extends to abstract varieties by patching over an affine cover in a natural and canonical way.
Details can be found in $\S1.9$ of~\cite{marker}.

This following formalism was introduced by Buium~\cite{buium-dagfd} as an algebro-geometric approach to Kolchin's differential algebraic varieties.

\begin{definition}
\label{defn-dvar}
A {\em $D$-variety over $k$} is a pair $(V,s)$ where $V$ is an irreducible algebraic variety over $k$ and $s:V\to\tau V$ is a regular section to the prolongation  defined over $k$.
A {\em $D$-subvariety} of $(V,s)$ is then a $D$-variety $(W,t)$ where $W$ is a closed subvariety of $V$ and $t=s|_W$.
\end{definition}

An example of a $D$-variety is any algebraic variety $V$ over the constant field of $(k,\delta)$ and equipped with the zero section to its tangent bundle.
Such $D$-varieties, and those isomorphic to them, are called {\em isotrivial}.
(By a morphism of $D$-varieties $(V,s)$ and $(W,t)$ we mean a morphism $f:V\to W$ such that $t\circ f=\tau f\circ s$).
We will eventually construct $D$-varieties, both over the constants and not, that are far from isotrivial.

Suppose now that $V\subseteq\mathbb A^n$ is an affine variety over $k$ and $k[V]$ is its co-ordinate ring.
Then the possible affine $D$-variety structures on $V$ correspond bijectively to the extensions of $\delta$ to a derivation on $k[V]$.
Indeed, given $s:V\to\tau V$, write $s(X)=\big(X,s_1(X),\dots,s_n(X)\big)$ in variables $X=(X_1,\dots,X_n)$.
There is a unique derivation on the polynomial ring $k[X]$ that extends $\delta$ and takes $X_i\to s_i(X)$.
The fact that $s$ maps $V$ to $\tau V$ will imply that this induces a derivation on $k[V]=k[X]/I(V)$.
Conversely suppose we have an extension of $\delta$ to a derivation on $k[V]$, which we will also denote by $\delta$.
Then we can write $\delta\big(X_i+I(V)\big)=s_i(X)+I(V)$ for some polynomials $s_1,\dots,s_n\in k[X]$.
The fact that $\delta$ is a derivation on $k[V]$ extending that on $k$ will imply that $s=(\id,s_1,\dots,s_n)$ is a regular section to $\pi:\tau V\to V$.
It is not hard to verify that these correspondences are inverses of each other. Moreover, the usual correspondence between subvarieties of $V$ defined over $k$ and prime ideals of $k[V]$, restricts to a correspondence between the $D$-subvarieties of $(V,s)$ defined over $k$ and the prime differential ideals of $k[V]$.

From now on, whenever we have an affine $D$-variety $(V,s)$ over $k$ we will denote by $\delta$ the induced derivation on $k[V]$ described above.
In fact, we will also use $\delta$ for its unique extension to the fraction field $k(V)$.

\subsection{The Kolchin topology and differentially closed fields}
While the algebro-geometric preliminaries discussed in the previous section are essentially sufficient for explaining the construction of the example whose existence Theorem~\ref{diffex} asserts, the proof that this construction is possible, and that it does the job, will use some model theory of differentially closed fields.
We therefore say a few words on this now, referring the reader to Chapter~2 of~\cite{mof} for a much more detailed introduction to the subject.

Given any differential field of characteristic zero, $(k,\delta)$, for each $n>0$, the derivation induces on $\mathbb A^n(k)$ a noetherian topology that is finer than the Zariski topology, called the {\em Kolchin topology}.
Its closed sets are the zero sets of {\em $\delta$-polynomials}, that is, expressions of the form $P(X,\delta X,\delta^2X,\dots,\delta^\ell X)$ where $\delta^iX=(\delta^iX_1,\dots,\delta^iX_n)$ and $P$ is an ordinary polynomial over $k$ in $(\ell+1)n$ variables.

Actually the Kolchin topology makes sense on $V(k)$ for any (not necessarily affine) algebraic variety $V$, by considering the Kolchin topology on an affine cover.
One can then develop $\delta$-algebraic geometry in general, for example the notions of {\em $\delta$-regular} and {\em $\delta$-rational} maps between Kolchin closed sets, in analogy with classical algebraic geometry.

The Kolchin closed sets we will mostly come across will be of the following form.  Suppose that $(V,s)$ is a $D$-variety over $k$. Then set
$$(V,s)^\sharp(k):=\{a\in V(k):s(a)=\nabla(a)\}.$$
Recall that $\nabla:V(k)\to\tau V(k)$ is the map given by $a\mapsto (a,\delta a)$.
So to say that $s(a)=\nabla(a)$ is to say, writing $s=(\id,s_1,\dots,s_n)$ in an affine chart, that $\delta a_i=s_i(a)$ for all $i=1,\dots,n$.
As the $s_i$ are polynomials, $(V,s)^\sharp$ is Kolchin closed; in fact it is defined by order $1$ algebraic differential equations.
While these Kolchin closed sets play a central role, not every Kolchin closed set we will come across will be of this form.

Just as the geometry of Zariski closed sets is only made manifest when the ambient field is algebraically closed, the appropriate universal domain for the Kolchin topology is a {\em differentially closed} field $(K,\delta)$ extending $(k,\delta)$.
This means that any finite system of $\delta$-polynomial equations and inequations over $K$ that has a solution in some differential field extension of $(K,\delta)$, already has a solution in $(K,\delta)$.
In particular, $K$ is algebraically closed, as is its field of constants.
One use of differential-closedness is the following property, which is an instance of the ``geometric axiom'' for differentially closed fields (statement~(ii) of Section~2 of~\cite{pierce-pillay}).

\begin{fact}
\label{densesharp}
Suppose $(V,s)$ is a $D$-variety over $k$.
Let $(K,\delta)$ be a differentially closed field extending $(k,\delta)$.
Then $(V,s)^\sharp(K)$ is Zariski dense in $V(K)$.
In particular, an irreducible subvariety $W\subseteq V$ over $k$ is a $D$-subvariety if and only if $W\cap(V,s)^\sharp(K)$ is Zariski dense in $W(K)$.
\end{fact}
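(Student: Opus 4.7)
The plan is to establish the Zariski density of $(V,s)^\sharp(K)$ in $V(K)$ directly from the geometric axiom of differential-closedness, and then deduce the characterisation of $D$-subvarieties as an immediate corollary.

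For the density statement, fix a nonempty Zariski open $U \subseteq V(K)$; after passing to an affine chart I may assume $V \subseteq \mathbb A^n$ and $U = V \setminus Z(f)$ for some nonzero $f \in K[V]$. Writing $s = (\id, s_1, \ldots, s_n)$, Subsection~\ref{dvarieties} provides an extension of $\delta$ from $k$ to a derivation on $k[V]$ sending $X_i + I(V) \mapsto s_i(X) + I(V)$; because $V$ and $s$ are defined over $k$, this extends compatibly to a derivation on $K[V]$, and then on $K[V][1/f]$, extending $\delta$ on $K$. Passing to the fraction field produces a differential field extension of $(K,\delta)$ in which the image $a$ of the generic point of $V$ satisfies $s(a) = \nabla(a)$ and $f(a) \neq 0$. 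Hence the finite system of order-one $\delta$-polynomial conditions
\[
\delta X_1 - s_1(X) = 0,\ \ldots,\ \delta X_n - s_n(X) = 0,\quad X \in V,\quad f(X) \neq 0
\]
has a solution in some differential field extension of $K$, and therefore, by differential-closedness, in $K$ itself; this solution is the desired point of $(V,s)^\sharp(K) \cap U$.

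For the ``in particular'' clause, the forward direction follows by applying the density statement to the $D$-subvariety $(W, s|_W)$. For the converse, suppose $W \cap (V,s)^\sharp(K)$ is Zariski dense in $W(K)$. For each $a$ in this intersection one has $s(a) = \nabla(a) = (a, \delta a)$, and the standard verification (differentiate the defining equations of $W$) shows $\nabla(a) \in \tau W(K)$. Thus $s|_W$ is a regular map from $W$ to $\tau V$ whose restriction to a Zariski dense subset lands inside the closed subvariety $\tau W \subseteq \tau V$; by continuity $s(W) \subseteq \tau W$, so $(W, s|_W)$ is a $D$-subvariety of $(V,s)$. The only nontrivial input is the geometric axiom of differential-closedness—once the consistency of the above system is witnessed by the generic point construction, the rest is formal; the main subtlety is ensuring that the extension of $\delta$ to $K[V]$ is well-defined, which is exactly where one uses that $V$ and $s$ are defined over $k$ so that the prolongation does not depend on whether it is computed using $\delta|_k$ or $\delta|_K$.
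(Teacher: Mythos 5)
The paper does not prove this Fact; it simply cites it as an instance of the geometric axiom from Pierce--Pillay. Your argument is a correct unwinding of exactly that axiom and of the correspondence set up in Subsection~\ref{dvarieties}, and the ``in particular'' part is handled correctly: $\nabla(a)\in\tau W(K)$ for all $a\in W(K)$ by the Leibniz rule applied to the defining equations of $W$, and then density plus closedness of $\tau W$ in $\tau V$ gives $s(W)\subseteq\tau W$.

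One point you gloss over: in the density argument you form $\Frac\bigl(K[V][1/f]\bigr)$, which requires $K[V]=K\otimes_k k[V]$ to be a domain, i.e.\ requires $V$ to be geometrically irreducible, not merely irreducible over $k$ as in Definition~\ref{defn-dvar}. If $V$ is not geometrically irreducible one should pass to a minimal prime of $K[V]$ not containing $f$; such a prime is automatically a $\delta$-ideal (a special case of Lemma~\ref{lem: decomp}), so the derivation descends and the rest of the argument goes through. In the paper's applications the $D$-varieties are all geometrically irreducible, so this is only a wrinkle in the full generality of the statement, but since Definition~\ref{defn-dvar} does not assume geometric irreducibility it is worth flagging.
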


\subsection{A $D$-variety construction over function fields}
\label{aadgroup}
We aim to prove Theorem~\ref{diffex} by constructing a $D$-variety over $\mathbb C$ whose co-ordinate ring will have the desired differential-algebraic properties.
But we begin with a well known construction of a $D$-structure on the universal vectorial extension of an abelian variety.
This is part of the theory of the Manin kernel and was used by both Buium and Hrushovski in their proofs of the function field Mordell-Lang conjecture.
There are several expositions of this material available, our presentation is informed by Marker~\cite{marker} and Bertrand-Pillay~\cite{bertrandpillay}.

Fix a differential field $(k,\delta)$ whose field of constants is $\mathbb C$ but $k\neq\mathbb C$.
(The latter is required because we will eventually need an abelian variety over $k$ that is not isomorphic to any defined over $\mathbb C$.)
In practice $k$ is taken to be a function field over $\mathbb C$.
For example, one can consider $k=\mathbb C(t)$ and $\delta=\frac{\text d}{\text{dt}}$.

Let $A$ be an abelian variety over $k$, and let $\widehat A$ be the {\em universal vectorial extension of $A$}.
So $\hat A$ is a connected commutative algebraic group over $k$ equipped with a surjective morphism of algebraic groups $p:\widehat A\to A$ whose kernel is isomorphic to an algebraic vector group, and moreover, we have the universal property that $p$ factors uniquely through every such extension of $A$ by a vector group.
The existence of this universal object goes back to Rosenlicht~\cite{rosenlicht}, but see also the more modern and general algebro-geometric treatment in~\cite{mazurmessing}.
The dimension of $\widehat A$ is twice that of $A$.

The prolongation $\tau\widehat A$ inherits the structure of a connected commutative algebraic group in such a way that $\pi:\tau\widehat A\to \widehat A$ is a morphism of algebraic groups.
This is part of the functoriality of prolongations, see $\S2$ of~\cite{marker} for details on this induced group structure.
The kernel of $\pi$ is the vector group $\tau_0\widehat A$ which is isomorphic to the Lie algebra $T_0\widehat A$.
In fact, since $\tau\widehat A$ is a commutative algebraic group, one can show that $\tau\widehat A$ is isomorphic to the direct product $\widehat A\times\tau_0\widehat A$.

We can now put a $D$-variety structure on $\widehat A$.
Indeed it will be a {\em $D$-group} structure, that is, the regular section $s:\widehat A\to\tau\widehat A$ will be also a group homomorphism.
We obtain $s$ by the universal property that $\widehat A$ enjoys:
the composition $p\circ \pi:\tau\widehat A\to A$ is again an extension of $A$ by a vector group and so there is a unique morphism of algebraic groups $s:\widehat A\to\tau\widehat A$ over $k$ such that $p=p\circ\pi\circ s$.
It follows that $s$ is a section to $\pi$ and so $(\widehat A,s)$ is a $D$-group over $k$.

But $(\widehat A,s)$ is not yet the $D$-variety we need to prove Theorem~\ref{diffex}.
Rather we will need a certain canonical quotient of it.

\begin{lemma}
$(\widehat A,s)$ has a unique maximal $D$-subgroup $(G,s)$ over $k$ that is contained in $\ker(p)$.
\end{lemma}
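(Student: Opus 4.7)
The plan is to exhibit $G$ as the unique element of maximal dimension in the family $\mathcal{H}$ of all $D$-subgroups of $(\widehat A, s)$ defined over $k$ and contained in $\ker(p)$. The family is nonempty since the trivial subgroup $\{e\}$ belongs to it (as $s$ is a group homomorphism, $s(e)$ is the identity of $\tau\widehat A$), and every element of $\mathcal{H}$ has dimension bounded by $\dim\ker(p)$, so an element of maximal dimension exists.

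A useful preliminary observation is that $\ker(p)$ is a vector group, so in characteristic zero every closed algebraic subgroup of $\ker(p)$ is a linear subspace, in particular connected and irreducible. Hence every element of $\mathcal{H}$ is automatically irreducible, which is what is needed for the $D$-variety formalism of Definition~\ref{defn-dvar}.

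The key step will be to show that $\mathcal{H}$ is closed under sums. Given $H_1, H_2 \in \mathcal{H}$, the product $H_1 \times H_2$ is a $D$-subgroup of $(\widehat A \times \widehat A, (s,s))$ under the canonical identification $\tau(\widehat A \times \widehat A) = \tau\widehat A \times \tau\widehat A$. Because $(\widehat A, s)$ is a $D$-group, the addition morphism $\mu\colon \widehat A \times \widehat A \to \widehat A$ is a morphism of $D$-varieties. The image $\mu(H_1 \times H_2) = H_1 + H_2$ lies inside $\ker(p)$, where it coincides with the linear span of $H_1 \cup H_2$ in the vector space $\ker(p)$, hence is closed and irreducible. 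Since a closed irreducible image of a morphism of $D$-varieties inherits a compatible $D$-subvariety structure from the target (by a direct check using $\tau f \circ s_V = s_W \circ f$), we conclude $H_1 + H_2 \in \mathcal{H}$.

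To finish, I would pick any $G \in \mathcal{H}$ of maximal dimension. For an arbitrary $H \in \mathcal{H}$, the sum $G + H$ lies in $\mathcal{H}$ and contains $G$; by maximality of $\dim G$ together with irreducibility of $G$ and $G+H$, the inclusion is an equality $G = G + H$, so $H \subseteq G$. This yields both existence and uniqueness. The main technical point I anticipate is guaranteeing that $H_1 + H_2$ is actually closed: in a general algebraic group the set-theoretic image of multiplication is only constructible, but the containment in the vector group $\ker(p)$ forces it to be a linear subspace, which is exactly what promotes it from a merely constructible set to an honest $D$-subvariety.
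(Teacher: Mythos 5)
Your proof is correct, but it takes a genuinely different and more elementary route than the paper's. The paper establishes the lemma by passing to a differentially closed field $(K,\delta)\supseteq(k,\delta)$ and using Fact~\ref{densesharp}: it characterises the connected $D$-subgroups $H'$ of a $D$-group $(H,s)$ over $k$ as exactly those connected algebraic subgroups over $k$ in which $H'\cap(H,s)^\sharp(K)$ is Zariski dense, and then simply takes $G$ to be the Zariski closure of $\ker(p)\cap(\widehat A,s)^\sharp(K)$. Your argument avoids differentially closed fields entirely: you show the family $\mathcal H$ of $D$-subgroups inside $\ker(p)$ is closed under sums, using that the addition map $\mu\colon\widehat A\times\widehat A\to\widehat A$ is a morphism of $D$-varieties (because $s$ is a group homomorphism and $\tau$ is functorial), and that a closed irreducible image under a $D$-morphism is a $D$-subvariety. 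The one point worth tightening is your remark that it is the vector-group structure of $\ker(p)$ that rescues closedness and irreducibility of $H_1+H_2$: in fact, since $\widehat A$ is commutative, $\mu|_{H_1\times H_2}$ is a morphism of algebraic groups, so its image is automatically a closed connected subgroup (a constructible subgroup is closed, and in characteristic zero a connected image is irreducible); the vector-group structure makes this concrete but is not logically needed. Each approach has its merits: the paper's one-line construction is uniform and makes the role of $(\widehat A,s)^\sharp$ transparent for the subsequent Manin-kernel analysis, whereas yours is self-contained at the level of $D$-group algebra and would be more accessible to a reader not yet comfortable with differentially closed fields.
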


\begin{proof}
This is from the model theory of differentially closed fields.
Given any $D$-group $(H,s)$ over $k$, work in a differentially closed field $K$ extending $k$.
From Fact~\ref{densesharp} one can deduce that a connected algebraic subgroup $H'\leq H$ over $k$ is a $D$-subgroup if and only if $H'\cap (H,s)^\sharp(K)$ is Zariski dense in $H'$.
So, in our case, letting $G$ be the Zariski closure of $\ker(p)\cap(\widehat A,s)^\sharp(K)$ establishes the lemma.
\end{proof}

Let $V$ be the connected algebraic group $\widehat A/G$.
Then $V$ inherits the structure of a $D$-group which we denote by $\overline s:V\to\tau V$.
In fact, $\tau V$ is canonically isomorphic to $\tau\widehat A/\tau G$ and $\overline s(a+G)=s(a)+\tau G$.
This $D$-group $(V,\overline s)$ over $k$ is the one we are interested in.
Note that $p:\widehat A\to A$ factors through an algebraic group morphism $V\to A$, and so in particular $\dim A\leq\dim V\leq 2\dim A$.

\begin{remark}
\label{nadgroup}
It is known that $G=\ker(p)$, and so $V=A$, if and only if $A$ admits a $D$-group structure if and only if $A$ is isomorphic to an abelian variety over $\mathbb C$, see $\S3$ of~\cite{bertrandpillay}.
So in the case when $A$ is an elliptic curve that is not defined over $\mathbb C$, it follows that $\dim V=2$.
\end{remark}

The following well known fact reflects important properties of the Manin kernel that can be found, for example, in~\cite{marker}.
We give some details for the reader's convenience, at least illustrating what is involved, though at times simply quoting results appearing in the literature.

\begin{fact}
\label{dvarexample}
Let $(V,\overline s)$ be the $D$-variety constructed above.
Then
\begin{itemize}
\item[(i)]
$(V,\overline s)^\sharp(k^{\alg})$ is Zariski-dense in $V(k^{\alg})$.
\item[(ii)]
Suppose in addition that $A$ has no proper infinite algebraic subgroups (so is a {\em simple} abelian variety) and is not isomorphic to any abelian variety defined over $\mathbb C$.
Then the field of constants of 
 $\big(k(V),\delta\big)$ is $\mathbb C$.
\end{itemize}
\end{fact}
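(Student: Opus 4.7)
The plan is to leverage two deep structural facts about the Manin kernel $(V,\overline{s})^\sharp$ coming from the model theory of differentially closed fields (Hrushovski--Sokolovic; see \cite{marker}): (a) it is strongly minimal, and (b) for simple non-isotrivial $A$, it is orthogonal to the constants.

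For (i), I would first observe that $\operatorname{Tors}(V)\subseteq (V,\overline{s})^\sharp(k^{\alg})$. Indeed, since $\overline{s}$ is a group-homomorphism section (inherited from $s$, which is obtained from the universal property of $\widehat{A}$), the assignment $v\mapsto \overline{s}(v)-\nabla(v)$ is a group homomorphism $V\to\tau_0 V$ into the torsion-free vector group $\tau_0 V$; its kernel is exactly $(V,\overline{s})^\sharp$ and therefore contains all torsion of $V$. The torsion of $V$ is moreover infinite, because the kernel of $\pi\colon V\to A$ is a divisible torsion-free vector group, forcing $\operatorname{Tors}(V)\to\operatorname{Tors}(A)$ to be a bijection. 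Let $H\subseteq V$ denote the Zariski closure of $(V,\overline{s})^\sharp(k^{\alg})$. Then $H\cap (V,\overline{s})^\sharp$ is a definable subset of the strongly minimal set $(V,\overline{s})^\sharp$, and it contains the infinite set $\operatorname{Tors}(V)$; by strong minimality this intersection is cofinite in $(V,\overline{s})^\sharp$. Combined with Fact~\ref{densesharp}, which tells us that $(V,\overline{s})^\sharp$ is Zariski-dense in $V$ in any differentially closed field, this cofiniteness implies that $H$ contains a Zariski-dense subset of $V$, so $H=V$.

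For (ii), I would proceed by contradiction, supposing that $\delta f=0$ for some $f\in k(V)\setminus\mathbb{C}$. Since the constants of $k$ are $\mathbb{C}$ by hypothesis, we must have $f\notin k$, so that $f\colon V\dashrightarrow\mathbb{P}^1$ is a dominant rational map. For each $v\in (V,\overline{s})^\sharp$, the value $f(v)$ is a $\delta$-constant (since $f$ is a $\delta$-constant and $v$ is a $\sharp$-point, so $\delta$ and $f$ commute on evaluation at $v$), and hence lies in the constant field. Thus $f$ restricts to a well-defined map $(V,\overline{s})^\sharp\to\mathbb{C}$; by the Zariski-density of $(V,\overline{s})^\sharp$ in $V$, this restriction is dominant onto the constants and in particular non-constant. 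This is precisely the statement that $(V,\overline{s})^\sharp$ is \emph{non-orthogonal} to the constants, contradicting the Hrushovski--Sokolovic theorem that the Manin kernel of a simple non-isotrivial abelian variety is orthogonal to the constants.

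The main obstacle is that both parts rest on deep model-theoretic structural results on Manin kernels—strong minimality for (i) and orthogonality to the constants for (ii)—which I would invoke as black boxes from~\cite{marker}. The role of the two hypotheses on $A$ is precisely to guarantee these inputs: simplicity of $A$ yields strong minimality of $(V,\overline{s})^\sharp$, and non-isotriviality yields orthogonality to the constants.
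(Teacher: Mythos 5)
Your argument for part (ii) is essentially the first of the two routes the paper sketches: once you have a nonconstant $\delta$-rational map from the $\sharp$-points to the constants, you invoke orthogonality of the Manin kernel to the constants (Hrushovski--Sokolovic / Zilber dichotomy) to derive a contradiction. The paper mentions this route and then deliberately opts for a more explicit alternative (via semiminimality, producing an algebraic group $H$ over $C$ and then an isomorphism of $A$ with an abelian variety over $C$), on the grounds that the Zilber dichotomy is deeper than necessary. One small imprecision: $(V,\overline s)^\sharp$ is not literally the Manin kernel $A^\sharp$ (the Kolchin closure of the torsion in $A$); they are only $\delta$-rationally isomorphic (this is Proposition~3.9 of Bertrand--Pillay, which the paper cites explicitly), and the orthogonality result you invoke is phrased for $A^\sharp$. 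You should pass through this isomorphism before quoting the black box. With that caveat, your (ii) is sound.

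Your argument for part (i) has a genuine gap. Part (i) of the Fact is stated for an arbitrary abelian variety $A$ over $k$; the simplicity of $A$ is only assumed in part (ii). Your proof, however, appeals to strong minimality of $(V,\overline s)^\sharp$, which is precisely what simplicity of $A$ buys you --- you even say so explicitly in your closing paragraph. For a non-simple $A$ the definable set $(V,\overline s)^\sharp$ need not be strongly minimal, and the step ``$H\cap(V,\overline s)^\sharp$ is infinite, hence cofinite'' fails. The paper avoids this by showing outright that $\operatorname{Tors}(V)(k^{\alg})$ is Zariski-dense in $V$, with no model theory at all: the torsion of $A$ is dense (classical), it lifts to torsion of $\widehat A$ because $\ker(p)$ is divisible, the Zariski closure of the torsion of $\widehat A$ is then a subgroup projecting onto $A$ and hence all of $\widehat A$ by the universal property (no proper algebraic subgroup of $\widehat A$ surjects onto $A$), and finally density passes to the quotient $V=\widehat A/G$. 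Your observation that $\operatorname{Tors}(V)\to\operatorname{Tors}(A)$ is a bijection is correct and a nice alternative way to see the torsion of $V$ is large, but by itself it only yields infiniteness, not Zariski-density; to get density in general you still need something like the universal-property argument, not strong minimality.
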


\begin{remark}
As $(V,\overline s)$ is not affine, we should explain what differential structure we are putting on $k(V)$ in part~(ii).
Choose any affine open subset $U\subset V$, then $\tau U$ is affine open in $\tau V$, and $\overline s$ restricts to a $D$-variety structure on $U$.
We thus obtain, as explained in $\S$\ref{dvarieties}, an extension of $\delta$ to $k[U]$, and hence to $k(U)=k(V)$.
This construction does not depend on the choice of affine open $U$ (since $V$ is irreducible).
\end{remark}

\begin{proof}[Sketch of proof of Fact~\ref{dvarexample}]
Part~(i).
The group structure on $\tau V$ is such that $\nabla:V(k^{\alg})\to \tau V(k^{\alg})$ is a group homomorphism.
Hence the difference $\overline s-\nabla:V(k^{\alg})\to\tau V(k^{\alg})$ is a group homomorphism.
Its image lies in $\tau_0 V$ which is isomorphic to the vector group $T_0 V$.
Hence all the torsion points of $V(k^{\alg})$ must be in the kernel of $\overline s-\nabla$, which is precisely $(V,s)^\sharp(k^{\alg})$.
So it suffices to show that the torsion of $V(k^{\alg})$ is Zariski dense in $V(k^{\alg})$.
Now the torsion in $A(k^{\alg})$ is Zariski dense as $A(k^{\alg})$ is an abelian variety over~$k$.
Moreover, since $\ker(p)$ is divisible (it is a vector group), every torsion point of $A(k^{\alg})$ lifts to a torsion point of $\widehat A(k^{\alg})$.
One of the properties of the universal vectorial extension is that no proper algebraic subgroup of $\widehat A$ can project onto $A$ (this is~$4.4$ of~\cite{marker}) .
So the torsion of $\widehat A(k^{\alg})$ must be Zariski dense in $\widehat A(k^{\alg})$.
But $V=\widehat A/G$, and so the torsion of $V(k^{\alg})$ is also Zariski dense in $V(k^{\alg})$, as desired.

Part~(ii).
This part uses quite a bit more model theory than we have introduced so far, and as it is a known result, we content ourselves here with attempting only to give to the non model theorist some idea why the existence of a new differential constant in $\big(k(V),\delta)$ is inconsistent with $A$ not being defined over $\mathbb C$.

Work over a sufficiently large differentially closed field $K$ extending $k(V)$ and with field of constants $C$.
Then $C\cap k=\mathbb C$, so it suffices to show that a new differential constant in $\big(k(V),\delta)$ implies that $A$ is defined over $C$.

We will use model theoretic properties of the {\em Manin kernel} of~$A$;
let $A^\sharp\leq A(K)$ denote the Kolchin closure of the torsion subgroup of $A$.
It is a Zariski dense Kolchin closed subgroup of $A(K)$.
Note that, despite the notation, the Manin kernel is not itself the ``sharp'' points of a $D$-variety.
However, Proposition~3.9 of~\cite{bertrandpillay} tells us that $V\to A$ restricts to a $\delta$-rational isomorphism $(V,\overline s)^\sharp(K)\to A^\sharp$.

Suppose toward a contradiction that there is $f\in k(V)\setminus k$ with $\delta(f)=0$.
So $f\in C$, which means that as a rational function on $V$, $f$ is $C$-valued on Zariski generic points of $V$ over $k$.
It follows from Fact~\ref{densesharp} that {\em Kolchin generic} points of $(V,\overline s)^\sharp(K)$, that is  points not contained in any proper Kolchin closed subset over $k$, are Zariski generic in $V$.
Hence $f|_{(V,s)^\sharp(K)}$ is a $C$-valued $\delta$-rational function on $(V,\overline s)^\sharp(K)$.
Composing with the isomorphism $(V,\overline s)^\sharp(K)\to A^\sharp$, we obtain a nonconstant $C$-valued $\delta$-rational function on the Manin kernel, say $g:A^\sharp\to C$.
This gives us, at least, some nontrivial relationship between $A$ and $C$.

At this point one could invoke the fact that as $A$ is a simple abelian variety not defined over $C$, $A^\sharp$ is {\em locally modular strongly minimal} and hence {\em orthogonal} to $C$, which rules out the existence of any such $g:A^\sharp\to C$.
An explanation of these claims and their proofs can be found in~\cite[$\S$5]{marker}.
But this route uses the rather deep ``Zilber dichotomy" for differentially closed fields, which is not really required.
Instead, one can use $g$ to more or less explicitly build an isomorphism between $A$ and an abelian variety over $C$.
The existence of such an isomorphism follows from the study of finite rank definable groups in differentially closed fields, carried out by Hrushovski and Sokolovic in the unpublished manuscript~\cite{hrushovskisokolovic} and exposed in various places.
In brief: the simplicity of $A$ implies {\em semiminimality} of $A^\sharp$ (see~5.2 and~5.3 of~\cite{marker}) and then $g:A^\sharp\to C$ gives rise to a surjective $\delta$-rational group homomorphism with finite kernel $\phi:A^\sharp\to H(C)$, for some algebraic group $H$ over $C$.
(See, for example, Proposition~3.7 of~\cite{eagle} for a detailed construction of $\phi$.)
A final argument, which is explained in detail in~5.12 of~\cite{marker}, produces from $\phi$ the desired isomorphism between $A$ and an abelian variety defined over $C$.
\end{proof}

\medskip
\subsection{The Proof of Theorem~\ref{diffex}}
We now exhibit a complex affine differential algebra with the required properties.

Fix a positive transcendence degree function field $k$ over $\mathbb C$ equipped with a derivation $\delta$ so that the constant field of $(k,\delta)$ is $\mathbb C$.
For example $k$ is the rational function field $\mathbb C(t)$ and $\delta=\frac{\text d}{\text{dt}}$.
Applying the construction of the previous section to a simple abelian variety over $k$ that is not isomorphic to one defined over $\mathbb C$ we obtain a $D$-variety $(V,s)$ over $k$ satisfying the two conclusions of Fact~\ref{dvarexample}; namely, that $(V,s)^\sharp(k^{\alg})$ is Zariski dense in $V$ and the constant field of the derivation induced on the rational function field of $V$ is $\mathbb C$.
Replacing $V$ with an affine open subset, we may moreover assume that $V$ is an affine $D$-variety.

Write $k[V]=k[b]$ for some $b=(b_1,\dots,b_n)$.

\begin{lemma}
There exists a finite tuple  $a$ from $k$ such that $k=\mathbb C(a)$ and $\delta$ restricts to a derivation on $\mathbb C[a,b]$.
\end{lemma}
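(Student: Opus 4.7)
The plan is to realize $k$ as $\mathbb{C}(c)$ for a finite tuple $c=(c_1,\ldots,c_r)$ (possible because $k$ is a finitely generated field extension of $\mathbb{C}$), and then enlarge $c$ by adjoining the inverse of a single common denominator so as to absorb all the rational functions that occur when we apply $\delta$ to the given generators.

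More precisely, I would proceed as follows. First, fix generators $c_1,\ldots,c_r$ of $k$ as a field over $\mathbb{C}$. Each $\delta(c_i)$ lies in $k$, and each $\delta(b_j)$ lies in $k[V]=k[b]$; write each $\delta(b_j)$ as a polynomial in $b$ with finitely many coefficients in $k$. Collecting $\delta(c_1),\ldots,\delta(c_r)$ together with all coefficients of $\delta(b_1),\ldots,\delta(b_n)$ yields a finite subset $F\subset k=\mathbb{C}(c)$. Choose a single $Q\in\mathbb{C}[X_1,\ldots,X_r]$ with $Q(c)\neq 0$ which is a common denominator for all elements of $F$, so that $F\subset\mathbb{C}[c,Q(c)^{-1}]$. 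Now set
\[
a \ := \ \bigl(c_1,\ldots,c_r,\,Q(c)^{-1}\bigr),
\]
a finite tuple from $k$. Since $c$ is already contained in $a$, we have $k=\mathbb{C}(c)=\mathbb{C}(a)$.

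It remains to verify that $\delta$ preserves $\mathbb{C}[a,b]\subset k[V]$. Because $\delta$ is a derivation, it suffices to check this on the generators $c_1,\ldots,c_r,\,Q(c)^{-1},\,b_1,\ldots,b_n$. By construction, $\delta(c_i)\in F\subset\mathbb{C}[c,Q(c)^{-1}]=\mathbb{C}[a]\subseteq\mathbb{C}[a,b]$, and likewise the coefficients of $\delta(b_j)$ lie in $\mathbb{C}[a]$, whence $\delta(b_j)\in\mathbb{C}[a,b]$. For the remaining generator, the Leibniz rule gives
\[
\delta\bigl(Q(c)^{-1}\bigr)\ = \ -\,Q(c)^{-2}\,\delta\bigl(Q(c)\bigr) \ = \ -\,Q(c)^{-2}\sum_{i=1}^r \tfrac{\partial Q}{\partial X_i}(c)\,\delta(c_i),
\]
which is a polynomial expression over $\mathbb{C}$ in the entries of $a$ (using that each $\delta(c_i)\in\mathbb{C}[a]$), hence lies in $\mathbb{C}[a]$.

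There is no real obstacle here; the only conceptual point is recognising that although $\delta$ need not preserve the subring $\mathbb{C}[c]\subset k$, inverting a single polynomial in $c$ is enough to swallow all the denominators that arise from applying $\delta$ to the finitely many generators $c_1,\dots,c_r,b_1,\dots,b_n$ of $\mathbb{C}[c,b]$ over $\mathbb{C}$.
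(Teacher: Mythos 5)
Your proposal is correct and follows essentially the same route as the paper: fix field generators of $k$ over $\mathbb{C}$, take a single polynomial $Q$ whose inverse absorbs all denominators arising from $\delta$ applied to the generators of $k$ and to $b$, adjoin $Q(c)^{-1}$ to the tuple, and verify closure under $\delta$ on generators via the Leibniz rule. You have merely spelled out the ``straightforward calculation'' that the paper leaves to the reader.
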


\begin{proof}
Let $a_1,\dots,a_\ell\in k$ be such that $k=\mathbb C(a_1,\dots,a_\ell)$.
For each $i$, let $\delta a_i=\frac{P_i(a_1,\dots,a_\ell)}{Q_i(a_1,\dots,a_\ell)}$ where $P_i$ and $Q_i$ are polynomials over $\mathbb C$.
In a similar vein, each $\delta b_j$ is a polynomial in $b$ over $\mathbb C(a_1,\dots,a_\ell)$, so let $R_j(a_1,\dots,a_\ell)$ be the product of the denominators of these coefficients.
Then let $Q(a_1,\dots,a_\ell)$ be the product of all the $Q_i$'s and the $R_j$'s.
Set $a=\big(a_1,\dots,a_\ell, \frac{1}{Q(a_1,\dots,a_\ell)}\big)$.
A straightforward calculation using the Leibniz rule shows that this $a$ works.
\end{proof}

Let $R:=\mathbb C[a,b]$.
This will witness the truth of Theorem~\ref{diffex}.
Part~(i) of that theorem is immediate from the construction: $\Frac(R)=k(V)$, and so the constant field of $\big(\Frac(R),\delta\big)$ is $\mathbb C$.

Toward part~(ii), let $X$ be the $\mathbb C$-locus of $(a,b)$ so that $R=\mathbb C[X]$.
The projection $(a,b)\mapsto a$ induces a dominant morphism $X\to Y$, where $Y$ is the $\mathbb C$-locus of $a$, such that the generic fibre $X_a=V$.
The derivation on $R$ induces a $D$-variety structure on $X$, say $s_X:X\to TX$.
(Note that as $X$ is defined over the constants the prolongation is just the tangent bundle.)
Since $\delta$ on $k[V]$ extends $\delta$ on $R$, $s_X$ restricts to $s$ on $V$.

Let $v\in(V,s)^\sharp(k^{\alg})$ and consider the $\mathbb C$-locus $Z\subset X$ of $(a,v)$.
The fact that $s(v)=\nabla(v)$ implies that $s_X(a,v)=\nabla(a,v)\in TZ$.
This is a Zariski closed condition, and so $Z$ is a $D$-subvariety of $X$.
Via the correspondence of $\S$\ref{dvarieties}, the ideal of $Z$ is therefore a prime $\delta$-ideal of $R$.
As $v$ is a tuple from $k^{\alg}=\mathbb C(a)^{\alg}$, the generic fibre $Z_a$ of $Z\to Y$ is zero-dimensional.
In particular, $Z\neq X$, and so the ideal of $Z$ is nontrivial.

But the set of such points $v$ is Zariski dense in $V=X_a$, and so the union of the associated $D$-subvarieties $Z$ is Zariski dense in $X$.
Hence the intersection of their ideals must be zero.
We have proven that the intersection of all nontrivial prime differential ideals of $R$ is zero.
This gives part~(ii).

Finally there is the question of the Krull dimension of $R$, that is, $\dim X$.
As pointed out in Remark~\ref{nadgroup}, if we choose our abelian variety in the construction to be an elliptic curve (defined over $k$ and not defined over $\mathbb C$) then $\dim V=2$, and so $\dim X=\dim Y+2$.
But $\dim Y$, which is the transcendence degree of $k$, can be any positive dimension: for any $n$ there exist transcendence degree $n$ function fields $k$ over $\mathbb C$ equipped with a derivation such that $k^\delta=\mathbb C$.
So we get examples of any Krull dimension $\geq 3$.\qed

\medskip
\section{A counterexample in Poisson algebras}
\label{sect-poissoncounterexample}
\noindent
In this section, we use Theorem \ref{diffex} to show that for each $d\ge 4$, there is a Poisson algebra of Krull dimension $d$ that does not satisfy the Poisson Dixmier-Moeglin equivalence.  To do this, we need the following lemma, which gives us a way of getting a Poisson bracket from a pair of commuting derivations.

\begin{lemma}
\label{difftopoisson}
Suppose $S$ is a ring equipped with two commuting derivations $\delta_1,\delta_2$, and $k$ is a subfield contained in the kernel of both.
Then
$$\{r,s\}:=\delta_1(r)\delta_2(s)-\delta_2(r)\delta_1(s)$$
defines a Poisson bracket over $k$ on $S$.
\end{lemma}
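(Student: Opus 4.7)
The proof will consist of verifying the four axioms of a Poisson bracket: $k$-bilinearity, skew-symmetry, the Leibniz/derivation rule, and the Jacobi identity. I expect the first three to be essentially immediate, so the bulk of the work will go into the Jacobi identity, which is where commutativity of $\delta_1$ and $\delta_2$ is essential.

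First, I would dispatch the easy axioms. Bilinearity over $k$ follows at once from the fact that $\delta_1$ and $\delta_2$ are each $k$-linear (using $k\subseteq\ker\delta_1\cap\ker\delta_2$, scalars in $k$ pull out cleanly). Skew-symmetry $\{r,s\}=-\{s,r\}$ is visible on the face of the defining formula. For the Leibniz rule, expand
\[
\{rr',s\}=\delta_1(rr')\delta_2(s)-\delta_2(rr')\delta_1(s)
\]
and use the Leibniz rule for each $\delta_i$ to collect terms into $r\{r',s\}+r'\{r,s\}$; this is a one-line computation.

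The main obstacle is the Jacobi identity
\[
\{\{r,s\},t\}+\{\{s,t\},r\}+\{\{t,r\},s\}=0.
\]
My plan is to expand $\{\{r,s\},t\}$ directly. Writing $\{r,s\}=\delta_1(r)\delta_2(s)-\delta_2(r)\delta_1(s)$ and applying the Leibniz rule to $\delta_1(\{r,s\})$ and $\delta_2(\{r,s\})$, one obtains eight terms, each of the form $\pm\, D_1(r)\,D_2(s)\,D_3(t)$ where $D_1,D_2,D_3$ are monomials in $\delta_1,\delta_2$ of total length $3$. The cyclic sum yields twenty-four such terms. I would group them by the unordered partition of the three derivations among the three arguments (for instance, the terms where a composite $\delta_i\delta_j$ lands on one of $r,s,t$ versus the terms where each argument receives exactly one first-order derivation). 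The crucial point is that terms involving second-order operators on a single argument come in pairs of the form $\delta_1\delta_2(-)$ and $\delta_2\delta_1(-)$ with opposite signs, which cancel precisely because $[\delta_1,\delta_2]=0$. The remaining first-order terms, of the shape $\delta_i(r)\delta_j(s)\delta_\ell(t)$ with $\{i,j,\ell\}=\{1,1,2\}$ or $\{1,2,2\}$, cancel by the cyclic symmetry of the sum together with skew-symmetry in each pair of indices.

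I would organize this bookkeeping by defining, for any operator $D$ that is a product of $\delta_i$'s, the cyclic sum $C(D_1,D_2,D_3):=D_1(r)D_2(s)D_3(t)+D_1(s)D_2(t)D_3(r)+D_1(t)D_2(r)D_3(s)$, and observe that after expansion the left-hand side of Jacobi is a sum of such cyclic sums each of which vanishes identically once $\delta_1\delta_2=\delta_2\delta_1$ is invoked. This packaging keeps the calculation short and makes the role of commutativity transparent.
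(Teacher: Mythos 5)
Your approach is the same as the paper's: verify skew-symmetry and the Leibniz/derivation property directly, then expand the Jacobi identity and observe that the cyclic sum collapses to zero once $[\delta_1,\delta_2]=0$ is invoked. The paper phrases the derivation property by noting $\{r,-\}$ and $\{-,r\}$ are $S$-linear combinations of $\delta_1,\delta_2$, which is the same observation as your one-line Leibniz computation.

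One small slip in your bookkeeping that you would catch on carrying it out: the operator monomials in $\{\{r,s\},t\}$ have total length $4$, not $3$ (one first-order factor hits $t$, and the remaining total degree $3$ splits as $2+1$ over $r,s$), so every one of the $24$ cyclic-sum terms carries a second-order operator on exactly one argument and there are no purely first-order terms $\delta_i(r)\delta_j(s)\delta_\ell(t)$ to worry about. The cancellation then splits cleanly into two cases: the $\delta_i^2$ terms cancel in pairs by cyclic symmetry alone, while the $\delta_1\delta_2$ versus $\delta_2\delta_1$ terms are exactly where commutativity is needed, as you correctly identified as the crux. None of this changes the validity of the strategy.
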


\begin{proof}
Clearly, $\{r,r\}=0$. The maps $\{r,-\}$ and $\{-,r\}:S\to S$ are $k$-linear derivations since they are $S$-linear combinations of the $k$-linear derivations $\delta_1$ and $\delta_2$. 
It only remains to check the Jacobi identity.
A direct (but tedious) computation shows that 
\begin{eqnarray*}
\{r,\{s,t\}\}
&=&\delta_1(r)\delta_2\delta_1(s)\delta_2(t)+ \delta_1(r)\delta_1(s)\delta_2^2(t)\\
&-& \delta_1(r)\delta_2^2(s)\delta_1(t) -\delta_1(r)\delta_2(s)\delta_2\delta_1(t)\\
&-& \delta_2(r)\delta_1^2(s)\delta_2(t)-\delta_2(r)\delta_1(s)\delta_1\delta_2(t) \\
&+& \delta_2(r)\delta_1\delta_2(s)\delta_1(t)+\delta_2(r)\delta_2(s)\delta_1^2(t).
\end{eqnarray*}
Using this and the commutativity of the derivations $\delta_1$ and $\delta_2$, one can easily check that
$\{r,\{s,t\}\}+\{t,\{r,s\}\}+\{s,\{t,r\}\}=0$, as desired.
\end{proof}

\begin{proposition} \label{lem: pb} Let $k$ be a field of characteristic zero, and $R$ an integral $k$-algebra endowed with a nontrivial $k$-linear derivation $\delta$.  Then there is a Poisson bracket $\{\, \cdot \, , \, \cdot \,\}$ on $R[t]$ with the following properties.  
\begin{enumerate}
\item The Poisson center of $\Frac\big(R[t]\big)$ is equal to the field of constants of $(R,\delta)$;
\item if $P$ is a prime differential ideal of $R$ then $PR[t]$ is a Poisson prime ideal of $R[t]$;
%\item if $Q$ is a Poisson prime ideal of $R[t]$ then $Q\cap R$ is a prime differential ideal of $R$.
\end{enumerate}

\end{proposition}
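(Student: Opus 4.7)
The plan is to construct the Poisson bracket via Lemma~\ref{difftopoisson} applied to two commuting derivations on $R[t]$. I would first extend $\delta$ to a derivation on $R[t]$ by declaring $\delta(t) = 0$, and then let $\partial_t$ denote the usual formal derivative in $t$ (trivial on $R$). These two derivations commute, since their commutator is a derivation on $R[t]$ vanishing on the generating set $R \cup \{t\}$, hence is identically zero. Both are $k$-linear. Lemma~\ref{difftopoisson} then produces the Poisson bracket
$$\{f,g\} := \delta(f)\partial_t(g) - \partial_t(f)\delta(g)$$
on $R[t]$, which extends uniquely to $\Frac(R[t]) = \Frac(R)(t)$ since $R[t]$ is a domain.

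For part~(1), write $F := \Frac(R)$ and suppose $f \in F(t)$ is Poisson central. Testing against $g = t$ gives $\{f,t\} = \delta(f)$, forcing $\delta(f) = 0$. Since $\delta$ is nontrivial on $R$, I may choose $r \in R$ with $\delta(r) \neq 0$; then $\{f,r\} = -\partial_t(f)\delta(r) = 0$ yields $\partial_t(f) = 0$, and hence $f \in F$. Combining these, $f$ lies in the field of constants $F^\delta$. The reverse inclusion is immediate, since any $f \in F^\delta$ satisfies both $\delta(f) = 0$ and $\partial_t(f) = 0$, and is therefore Poisson central.

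For part~(2), given a prime differential ideal $P \subseteq R$, the quotient $R[t]/PR[t] \cong (R/P)[t]$ is a domain, so $PR[t]$ is prime. To verify it is Poisson, I would show $PR[t]$ is stable under both $\delta$ and $\partial_t$: for $f = \sum p_i t^i$ with each $p_i \in P$, one has $\delta(f) = \sum \delta(p_i) t^i \in PR[t]$ (using $\delta$-stability of $P$) and $\partial_t(f) = \sum i p_i t^{i-1} \in PR[t]$. Hence both terms in the formula for $\{f,g\}$ land in $PR[t]$ for every $g \in R[t]$. The argument is largely a direct verification; the one substantive step is the use of nontriviality of $\delta$ in part~(1), which is precisely what allows Poisson centrality to force $\partial_t(f) = 0$.
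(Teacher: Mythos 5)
Your proposal is correct and follows essentially the same route as the paper: extend $\delta$ to $R[t]$ by setting $\delta(t)=0$, pair it with $\partial_t$, check commutativity, and invoke Lemma~\ref{difftopoisson} to obtain exactly the bracket the paper uses, with parts~(1) and~(2) then following by the same direct computations (your part~(2) verification, showing $PR[t]$ is stable under both derivations separately, is a slightly more streamlined packaging of the paper's Leibniz calculation, but the content is the same).
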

\begin{proof}
Let $S=R[t]$ and consider the two derivations on $S$ given by $\delta_1(p):=p^\delta$ and $\delta_2:=\frac{d}{dt}$.
Here by $p^\delta$ we mean the polynomial obtained by applying $\delta$ to the coefficients.
Note that $\delta_1$ is the unique extension of $\delta$ that  sends $t$ to zero, while $\delta_2$ is the unique extension of the trivial derivation on $R$ that sends $t$ to $1$.
It is easily seen that $k$ is contained in the kernel of both.
These derivations commute on $S$ since $\delta_2$ is trivial on $R$ and on monomials of degree $n>0$ we have
\begin{eqnarray*}
\delta_1(\delta_2(rt^n))
&=&
\delta_1(nrt^{n-1})\\
&=&
n\delta_1(r)t^{n-1}\\
&=&
\delta_2(\delta_1(r)t^n)\\
&=&
\delta_2(\delta_1(rt^n)).
\end{eqnarray*}
Lemma~\ref{difftopoisson} then gives us that 
$$\{p(t),q(t)\}:=p^\delta(t)q'(t)-p'(t)q^\delta(t)$$
is a Poisson bracket on $S$.

Let $q(t)\in {\rm Frac}(S)$ be in the Poisson centre.
Then for $a\in R$ we must have,
$$ 0 =\left\{q(t),a\right\}=-\delta(a)q'(t). $$
Since $\delta$ is not identically zero on $R$, we see that
$\displaystyle q'(t)=0$.
This forces $q(t)=\alpha\in {\rm Frac}(R)$.
But then $0=\{\alpha,t\}=\delta(\alpha),$
and so $\alpha$ is in the constant field of $\Frac(R)$.
Conversely, if $f\in {\rm Frac}(R)$ with $\delta(f)=0$ then $\{f,q(t)\}=\delta(f)q'(t)=0$ for all $q(t)\in {\rm Frac}(R[t])$, and hence $f$ is in the Poisson center.  

If $P$ is a prime ideal of $R$ then $Q:=PS$ is a prime ideal of $S$.
If moreover $\delta(P)\subseteq P$ then $\{P,q(t)\}\subseteq q'(t)\delta(P)\subseteq Q$ for any $q(t)\in S$.
Hence
$$\{Q,q(t)\} \subseteq \{P,q(t)\}S+P\{S,q(t)\}\subseteq Q.$$
It follows that $\{Q,S\}\subseteq Q$, and so $Q$ is a Poisson prime ideal of $S$.
%Finally, if $Q\subseteq S$ is a Poisson prime ideal then $\delta(Q\cap R)=\{Q\cap R,t\}\subseteq Q\cap R$, and so $P:=Q\cap R$ is a prime $\delta$-ideal of $R$.
\end{proof}

\begin{corollary}  Let $d\ge 4$ be a natural number.  There exists a complex affine Poisson algebra of Krull dimension $d$ such that $(0)$ is Poisson rational but not Poisson locally closed.
In particular, the Poisson Dixmier-Moeglin equivalence fails.
\label{counterexample}
\end{corollary}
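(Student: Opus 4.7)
The plan is to assemble the corollary directly from Theorem~\ref{diffex} and Proposition~\ref{lem: pb}. Given $d \geq 4$, I would first apply Theorem~\ref{diffex} to obtain a complex affine differential algebra $(R,\delta)$ of Krull dimension $d-1 \geq 3$ such that the constants of $\Frac(R)$ are $\mathbb C$ and the intersection of all nontrivial prime differential ideals of $R$ is $(0)$. I would then set $A := R[t]$, equipped with the Poisson bracket supplied by Proposition~\ref{lem: pb}. Since $\dim R[t] = \dim R + 1 = d$, the algebra $A$ is a complex affine Poisson algebra of the correct Krull dimension.

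For the Poisson rationality of $(0)$ in $A$, I would simply invoke part~(1) of Proposition~\ref{lem: pb}: the Poisson center of $\Frac(A)$ equals the field of constants of $(\Frac(R),\delta)$, which is $\mathbb C$ by clause~(i) of Theorem~\ref{diffex}.

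For the failure of Poisson local closedness, I would use the characterisation~(ii) from Section~\ref{sect-diffpoi}: it suffices to show that the intersection of all nonzero Poisson prime ideals of $A$ is $(0)$. By part~(2) of Proposition~\ref{lem: pb}, each nontrivial prime differential ideal $P$ of $R$ yields a nonzero Poisson prime $PR[t]$ of $A$. Since an element $\sum r_i t^i \in R[t]$ lies in $PR[t]$ if and only if every coefficient $r_i$ lies in $P$, the intersection $\bigcap_P PR[t]$, taken over all nontrivial prime differential ideals $P$ of $R$, collapses to the ideal generated by $\bigcap_P P = (0)$ via clause~(ii) of Theorem~\ref{diffex}. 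Thus already the intersection of this subfamily of Poisson primes is zero, and \emph{a fortiori} $(0)$ is not Poisson locally closed.

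Honestly, there is no genuine obstacle in this corollary: all the difficulty has been front-loaded into Theorem~\ref{diffex} (which requires the Manin kernel construction of Section~\ref{diffalgsect}) and the computation in Proposition~\ref{lem: pb}. The corollary itself is a three-line bookkeeping argument, and the final clause that the Poisson Dixmier-Moeglin equivalence fails is immediate from the definition, since a Poisson rational ideal that is not Poisson locally closed witnesses $(3) \not\subseteq (2)$.
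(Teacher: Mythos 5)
Your proposal is correct and follows exactly the paper's own proof: apply Theorem~\ref{diffex} to get $(R,\delta)$ of Krull dimension $d-1$, endow $R[t]$ with the Poisson bracket from Proposition~\ref{lem: pb}, and read off rationality from part~(1) and the failure of local closedness from part~(2) plus the triviality of the intersection of the nontrivial prime differential ideals. Your additional remark about coefficients (that $\bigcap_P PR[t]$ is controlled by $\bigcap_P P$) is a welcome elaboration of a step the paper leaves implicit.
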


\begin{proof}
By Theorem \ref{diffex}, there exists a complex affine algebra $R$ of Krull dimension $d-1$ equipped with a derivation $\delta$ such that that field of constants of $\big({\rm Frac}(R),\delta\big)$ is $\mathbb{C}$ and such that the intersection of the nontrivial prime differential ideals of $R$ is zero.
By Proposition~\ref{lem: pb} we see that $R[t]$ can be endowed with a Poisson bracket such that $(0)$ is a Poisson rational ideal and such that the nontrivial prime differential ideals $P$ of $R$ generate nontrivial Poisson prime ideals $PR[t]$ in $R[t]$.
These Poisson prime ideals of $R[t]$ must then also have trivial intersection.
We have thus shown that $(0)$ is not Poisson locally closed in $R[t]$.  
\end{proof}

\medskip
\section{A finiteness theorem on height one differential prime ideals}
\label{sec:jouanolou}
\noindent
In this section we will prove the following differential-algebraic theorem, which will be used in the next section to establish a weak Poisson Dixmier-Moeglin equivalence.
%\marginpar{\tiny I put this in a new section. -R}

\begin{theorem}
Let $A$ be an affine $\mathbb C$-algebra equipped with $\mathbb{C}$-linear derivations $\delta_1,\ldots ,\delta_m$.
If there are infinitely many height one prime differential ideals then there exists $f\in {\rm Frac}(A)\setminus\mathbb C$ with $\delta_i(f)=0$ for all $i=1,\ldots,m$.
\label{thm: main}
\end{theorem}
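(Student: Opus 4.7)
The strategy is to prove the contrapositive via a Jouanolou-style argument: assuming $\Frac(A)^{\delta_1, \dots, \delta_m} = \mathbb{C}$, I will show that only finitely many height-one prime differential ideals exist. Fix a finite-dimensional $\mathbb{C}$-subspace $V \subseteq A$ containing $1$ and a set of generators of $A$, with $\delta_j(V) \subseteq V^N$ for some $N$ and all $j$; the Leibniz rule then gives $\delta_j(V^n) \subseteq V^{n+N-1}$. Let $\mathcal{S}_n$ denote the set of height-one prime differential ideals $P$ with $P \cap V^n \neq (0)$, so $\mathcal{S}_n \subseteq \mathcal{S}_{n+1}$ and their union is the full collection of height-one prime differential ideals of $A$.

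If some $\mathcal{S}_n$ were infinite, the intersection $\bigcap \mathcal{S}_n$ would vanish: in the affine domain $A$, an infinite collection of distinct height-one prime ideals has zero intersection, since otherwise infinitely many codimension-one irreducible subvarieties would all sit inside a proper closed subset of the Noetherian space $\Spec A$, contradicting that the latter has only finitely many irreducible components. Applying Lemma~\ref{lem: diffcenter} with $V := V^n$ and $\mathcal{S} := \mathcal{S}_n$ would then immediately deliver the desired constant. So, under the contrapositive hypothesis, every $\mathcal{S}_n$ must be finite, and we must find the constant by another route.

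For this remaining case I would invoke a logarithmic-derivative pigeonhole in the spirit of Jouanolou. After passing to the normalization of $A$ (to which the $\delta_j$ extend uniquely) and localizing at a suitable element so that each remaining height-one prime differential ideal is principal, $P = (f_P)$, the invariance condition $\delta_j(P) \subseteq P$ yields $\delta_j(f_P) = g_{P,j}\,f_P$ with $g_{P,j} \in A$. A degree count based on $\delta_j(V^n) \subseteq V^{n+N-1}$ forces each $g_{P,j}$ into a fixed finite-dimensional $\mathbb{C}$-subspace $W \subseteq A$ independent of $P$. Given infinitely many surviving $P$, the pigeonhole principle produces distinct $P \neq P'$ with $g_{P,j} = g_{P',j}$ for all $j$; a direct Leibniz calculation then yields $\delta_j(f_P/f_{P'}) = 0$ for all $j$ and $f_P/f_{P'} \in \Frac(A) \setminus \mathbb{C}$, contradicting our hypothesis.

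The main obstacle is the Case 2 reduction to a principal-ideal setting together with the uniform finite-dimensional bound on the logarithmic derivatives across height-one primes of possibly unbounded $V$-filtration degree. The class group of the normalization of $A$ may prevent a clean passage to a UFD by mere localization, so one must either work with divisorial data and local uniformizers (tracking the ambiguity in the log derivative modulo derivations of units) or restrict to a well-chosen open subset where factoriality holds. The degree bound is cleanest in a polynomial-ring-like graded setting, and the general case requires careful filtered-algebra bookkeeping, most naturally in the associated graded ring of the $V$-filtration.
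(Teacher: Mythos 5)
Your overall strategy---reduce to the principal case, control logarithmic derivatives $\delta_j(f_P)/f_P$ in a fixed finite-dimensional space $W$, and then force a new constant---is indeed the right skeleton and matches the structure of the paper's argument (Propositions~\ref{prop: W},~\ref{prop: space}, and Lemma~\ref{lem: 1}). Your Case~1 observation (if some $\mathcal{S}_n$ is infinite then Lemma~\ref{lem: diffcenter} applies directly) is correct, but, as you note, the hard case is when every $\mathcal{S}_n$ is finite while $\bigcup_n \mathcal{S}_n$ is infinite, and there your Case~2 has two serious gaps.

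First, the bound $g_{P,j}\in W$ does not follow from a filtration/degree count, even after the bookkeeping you allude to. For a general (even integrally closed) affine domain, $f\in V^n\setminus V^{n-1}$ and $\delta_j(f)\in V^{n+N-1}$ give no control on where the quotient $\delta_j(f)/f$ lives: the filtration has no division compatibility, and the top ``degrees'' can cancel. The paper obtains this bound in Proposition~\ref{prop: W} by compactifying $\operatorname{Spec} A$ to a normal projective variety $Y$ and invoking Morrison's result on valued differential fields (Fact~\ref{lem: bound}), which bounds $v_i(\delta(f)/f)$ \emph{uniformly} at the finitely many boundary divisors $Y_i$ of $Y\setminus X$; the space $W$ is then the Riemann--Roch space $L\bigl(N\sum[Y_i]\bigr)$. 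Your ``degree count'' works in a graded polynomial ring (where the sole boundary divisor is the hyperplane at infinity), but the general case genuinely needs the valuation-theoretic input, not filtered-ring bookkeeping.

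Second, and more fundamentally, the pigeonhole step fails: $W$ is a finite-dimensional $\mathbb{C}$-vector space, hence an \emph{uncountable set}, so infinitely many primes $P$ do not yield two with the same tuple $(g_{P,1},\dots,g_{P,m})\in W^m$. Nor can you fix this with a linear-dependence argument, since a relation $g_{P_k,j}=\sum_i c_i\, g_{P_i,j}$ with $c_i\in\mathbb{C}$ would produce a ``constant'' $f_{P_k}/\prod_i f_{P_i}^{c_i}$ that makes no sense algebraically when the $c_i$ are not integers. The paper's route around this is quite different: it uses B\'ezout-type estimates (Proposition~\ref{lem: bigbound}, resting on Heintz's degree bound and Kronecker's theorem) together with the uncountability of $\mathbb{C}$ to show that the set of $r\in V$ with $\delta_j(r)/r\in W$ has \emph{uncountable} image in $\mathbb{P}(V)$, whence the principal ideals $rA$ have trivial intersection (via a divisor-support count), and then it feeds this set of ideals into the abstract Lemma~\ref{lem: diffcenter}. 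Finally, the reduction to a UFD after localization, which you flag as an obstacle, is handled in the paper by first descending the whole situation to a finitely generated subfield $k\subseteq\mathbb{C}$ (using the hypothesis that $\operatorname{Frac}(A)^{\delta}=\mathbb{C}$ to show this descent preserves infinitude of height-one differential primes) and then applying a Mordell--Weil--N\'eron--Severi finiteness argument to the class group (Lemma~\ref{lem: 1}); without the descent to finitely generated base, the class group need not be finitely generated and the localization-to-UFD step would indeed fail.
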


When $m=1$ this theorem is a special case of unpublished work of Hrushovski~\cite[Proposition~2.3]{hrushovski-jouanolou}.
It is possible that Hrushovski's method (which goes via a generalisation of a theorem of Jouanolou) extends to this setting of several (possibly noncommuting) derivations.
But we will give an algebraic argument that is on the face of it significantly different.
We first show that if the principal ideal $fA$ is already a differential ideal then $\delta(f)/f$ is highly constrained (Proposition~\ref{prop: W}).
We then use this, together with B\'ezout-type estimates (Proposition~\ref{lem: bigbound}), to deal with the case when the given height one prime differential ideals are principal (Proposition~\ref{prop: space}).
Finally, using Mordell-Weil-N\'eron-Severi, we are able to reduce to that case.

We will use the following fact from valued differential fields.

\begin{fact}{\cite[Corollary~5.3]{morrison}\footnote{We thank Matthias Aschenbrenner for pointing us to this reference.}}
\label{lem: bound}
%\marginpar{\tiny This replaces Jason's argument. -R}
Suppose $K/k$ is a function field of characteristic zero and transcendence degree $d$, and $v$ is a rank one discrete valuation on $K$ that is trivial on $k$ and whose residue field is of transcendence degree $d-1$ over $k$.
Then for any $k$-linear derivation $\delta$ on $K$ there is a positive integer $N$ such that $v\big(\frac{\delta(f)}{f}\big)>-N$ for all nonzero $f\in K$.
\end{fact}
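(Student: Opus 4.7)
The plan is to set up coordinates adapted to the valuation, handle the pure transcendental case by direct computation, and then transfer to $K$ via the integral closure of a DVR in a finite extension. Since the residue field $\kappa$ of $v$ has transcendence degree $d-1$ over $k$, choose $t_1,\dots,t_{d-1}\in\mathcal{O}_v$ whose residues form a transcendence basis of $\kappa/k$, and let $\pi$ be a uniformizer at $v$. Setting $L:=k(t_1,\dots,t_{d-1},\pi)$, the subfield $L\subseteq K$ has transcendence degree $d$ over $k$; since $K/k$ is finitely generated, $K/L$ is finite. Moreover $v|_L$ coincides with the $\pi$-adic valuation of $L$: because the residues $\bar t_1,\dots,\bar t_{d-1}$ are algebraically independent in $\kappa$, every nonzero element of $k(t_1,\dots,t_{d-1})$ is a unit at $v$.

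For the case of $L$ itself, write $f=P/Q$ with $P,Q\in k[t_1,\dots,t_{d-1},\pi]$, and expand $P=\sum_j Q_j(t)\pi^j$ with $Q_j\in k[t_1,\dots,t_{d-1}]$. Direct inspection gives $v(\partial_{t_i}(P))\ge v(P)$ and $v(\partial_\pi(P))\ge v(P)-1$: taking $\partial_{t_i}$ preserves the lowest $\pi$-power, while $\partial_\pi$ drops it by at most one (characteristic zero ensures the leading term is not killed). Writing $\delta=\sum_{i=1}^{d-1}\delta(t_i)\partial_{t_i}+\delta(\pi)\partial_\pi$ and choosing $M$ with $v(\delta(t_i)),\,v(\delta(\pi))\ge -M$, we obtain $v(\delta(P)/P)\ge -M-1$ for every nonzero polynomial $P$. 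Since $\delta(f)/f=\delta(P)/P-\delta(Q)/Q$, the same bound holds for all $f\in L^{\times}$.

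To pass from $L$ to $K$, note that each partial $\partial_{t_i},\partial_\pi$ extends uniquely to a $k$-linear derivation $D_i$ on $K$ (characteristic zero plus algebraicity of $K/L$), so any $k$-linear derivation $\delta$ of $K$ has the form $\delta=\sum c_iD_i$ with $c_i\in K$; it thus suffices to bound each $v(D_i(g)/g)$ uniformly on $K^{\times}$. Let $S$ be the integral closure in $K$ of the DVR $\mathcal{O}_{L,(\pi)}$, which is a finitely generated $\mathcal{O}_{L,(\pi)}$-module by standard commutative algebra. Fixing module generators $e_1,\dots,e_n$ of $S$ and expanding $g=\sum a_je_j$ with $a_j\in\mathcal{O}_{L,(\pi)}$, the Leibniz rule combined with the polynomial bound just established (applied to the $a_j$) and the fixed valuations $v(D_i(e_j))$ produces an integer $N_0$ with $v(D_i(g))\ge -N_0$ for every $g\in S$. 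Since $\mathcal{O}_v$ is the localization of $S$ at the maximal ideal corresponding to $v$, every unit of $\mathcal{O}_v$ is a ratio of valuation-zero elements of $S$, whence $v(D_i(u)/u)\ge -N_0$ for all such units; writing a general $g\in K^{\times}$ as $\pi^mu$ and using $v(D_i(\pi)/\pi)\ge -N_0-1$ then yields a uniform bound on $v(D_i(g)/g)$. Absorbing the valuations of the coefficients $c_i$ delivers the claimed $N$.

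The main technical obstacle is the book-keeping in the extension step: one must track the ramification index $e$ between $v|_L$ and $v$ when transferring the polynomial bound to integral elements of $S$, and verify that $S$ is indeed a finitely generated $\mathcal{O}_{L,(\pi)}$-module. Both are routine in characteristic zero (the latter being standard finiteness of integral closure of a DVR in a finite separable extension), but require care to state cleanly.
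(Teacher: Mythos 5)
Your argument is correct, but note that the paper does not prove this statement at all: it is stated as a \emph{Fact} and quoted from Morrison's \emph{Continuous derivations} (Corollary~5.3 there), where it appears as a continuity statement for derivations relative to divisorial valuations. So your proof is a genuinely independent, self-contained replacement. The structure is sound: the residues $\bar t_1,\dots,\bar t_{d-1}$ being algebraically independent forces $v(Q(t))=0$ for every nonzero $Q\in k[t_1,\dots,t_{d-1}]$, which simultaneously shows that $t_1,\dots,t_{d-1},\pi$ are algebraically independent over $k$ (any relation $\sum_j Q_j(t)\pi^j=0$ would give the finite value $j_0$ to $v(0)$) and that $v|_L$ is the $\pi$-adic valuation --- you use the second consequence but should state the first explicitly, since ``$L$ has transcendence degree $d$'' is what makes $K/L$ finite. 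The bounds $v(\partial_{t_i}P)\ge v(P)$ and $v(\partial_\pi P)\ge v(P)-1$, the logarithmic-derivative reduction to polynomials, the basis claim $\delta=\sum c_iD_i$ (valid because $\dim_K\operatorname{Der}_k(K)=d$ in characteristic zero and the $D_i$ are visibly independent), and the passage through the integral closure $S$ with $\mathcal{O}_v=S_{\mathfrak m}$ are all correct; the key point that for $s\in S$ with $v(s)=0$ one has $v(D_i(s)/s)=v(D_i(s))\ge -N_0$ is exactly what makes the unit case work. Finally, the ramification worry you raise at the end is vacuous in your own setup: you chose $\pi$ to be a uniformizer of $v$ on $K$ and placed it inside $L$, so $v(L^\times)=v(K^\times)=\mathbb Z$ and $e=1$ by construction. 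What your approach buys is elementarity (nothing beyond finiteness of integral closure of a DVR in a finite separable extension); what the citation buys the authors is brevity and a bound stated uniformly in Morrison's more general framework of continuous derivations.
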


\begin{proposition} Let $k$ be a field of characteristic zero and let $A$ be a finitely generated integrally closed $k$-algebra equipped with a $k$-linear derivation $\delta$.  Then there is a finite-dimensional $k$-vector subspace $W$ of $A$ with the property that whenever $f\in A\setminus \{0\}$ has the property that $\delta(f)/f\in A$ we must have $\delta(f)/f\in W$.
\label{prop: W}
\end{proposition}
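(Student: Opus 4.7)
The plan is to compactify $X := \spec A$ and use that, by Fact~\ref{lem: bound}, the pole orders of logarithmic derivatives at the boundary divisors are uniformly bounded. Finite-dimensionality of $W$ then becomes finite-dimensionality of global sections of a divisorial sheaf on a projective variety.

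First I would embed $X$ into a normal projective $k$-variety $\bar X$ as an open subvariety: choose a closed immersion $X\hookrightarrow \mathbb A^n$, form the projective closure $\bar X_0\subseteq \mathbb P^n$, and let $\bar X\to \bar X_0$ be the normalisation (which is finite, hence $\bar X$ is projective and normal). Because $A$ is integrally closed, $X$ sits inside the normal locus of $\bar X_0$, so $X$ is open in $\bar X$. Let $D_1,\dots,D_r$ be the irreducible codimension one components of $\bar X\setminus X$. Each $D_i$ determines a rank one discrete valuation $v_i$ on $K:=\Frac(A)$ which is trivial on $k$ and whose residue field, being the function field of $D_i$, has transcendence degree $\dim\bar X - 1$ over $k$.

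Next, apply Fact~\ref{lem: bound} to each $v_i$ and to the unique extension of $\delta$ to $K$: there is a positive integer $N_i$ with $v_i\bigl(\delta(h)/h\bigr)>-N_i$ for every nonzero $h\in K$. Set $D:=\sum_{i=1}^r (N_i-1)D_i$ and
\[
W \ := \ H^0\bigl(\bar X,\mathcal O_{\bar X}(D)\bigr),
\]
viewed inside $K$. Then $W$ is a finite-dimensional $k$-vector space (global sections of a divisorial sheaf on a proper normal variety). Moreover $W\subseteq A$: an element $g\in W$ has $v_P(g)\geq 0$ for every codimension one point $P$ of $\bar X$ outside $\{D_1,\dots,D_r\}$, and such $P$ include all height one primes of $A$; since $A$ is integrally closed, $g\in\bigcap_P A_P = A$.

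Finally, suppose $f\in A\setminus\{0\}$ satisfies $g:=\delta(f)/f\in A$. Then $g$ is regular on $X$, so as a rational function on $\bar X$ it has poles only along the $D_i$. By the choice of $N_i$, the pole order of $g$ along $D_i$ is at most $N_i-1$, so $g\in W$, as required.

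The only real step of substance is the compactification and the observation that the $v_i$ satisfy the hypotheses of Fact~\ref{lem: bound}; once these are in place, the rest is bookkeeping. The mild subtlety is that $\bar X$ need not be smooth, so one must allow Weil (divisorial) sheaves rather than line bundles, but finite-dimensionality of $H^0$ and the identification $W\subseteq A$ go through unchanged because $\bar X$ is normal and proper.
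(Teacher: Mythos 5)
Your proof is correct and follows essentially the same route as the paper's: compactify $\spec A$ to a normal projective variety, use Fact~\ref{lem: bound} to uniformly bound the pole orders of logarithmic derivatives along the boundary divisors, and conclude that all admissible $\delta(f)/f$ land in the finite-dimensional space of global sections of a divisorial sheaf. The only (cosmetic) difference is that the paper defines $W$ inside $k(Y)$ and then replaces it by $W\cap A$, whereas you observe directly that $W\subseteq A$ via the algebraic Hartogs property $A=\bigcap_{\mathrm{ht}\,\mathfrak p=1}A_{\mathfrak p}$ for the normal ring $A$; this is a slight tightening but does not change the argument.
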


\begin{proof} We have that $A$ is the ring of regular functions on some irreducible affine normal variety $X$.  Moreover, $X$ embeds as a dense open subset of a projective normal variety $Y$.  Then $Y\setminus X$ is a finite union of closed irreducible subsets whose dimension is strictly less than that of $X$.  We let $Y_1,\ldots ,Y_\ell$ denote the closed irreducible subsets in $Y\setminus X$ that are of codimension one in $Y$.
Let $f\in A\setminus \{0\}$ be such that $g:=\delta(f)/f\in A$.  Then $g$ is regular on $X$ and so its poles are concentrated on $Y_1,\dots, Y_\ell$.
But by Fact \ref{lem: bound}, if we let $\nu_i$ be the valuation on $k(X)$ induced by $Y_i$, we have that there is some natural number $N$ that is independent of $f$ such that
$\nu_i(g)\ge -N$ for all $i=1,\ldots,\ell$.
It follows that 
$$g\in W:=\{ s\in k(Y)\setminus \{0\}~ :~ {\rm div}(s)\ge -D\}\cup \{0\},$$ where $D$ is the effective divisor $N[Y_1]+\cdots +N[Y_\ell]$.
Since $Y$ is a projective variety that is normal in codimension one, $W$ is a finite-dimensional $k$-vector subspace of ${\rm Frac}(A)$, see \cite[Corollary A.3.2.7]{SilvermanHindry}.  By assumption $g=\delta(f)/f\in A$ and so we may replace $W$ by $W\cap A$ if necessary to obtain a finite-dimensional subspace of $A$.   
\end{proof}

\medskip
\subsection{B\'ezout-type estimates}
The next step in our proof of Theorem~\ref{thm: main} is Proposition~\ref{lem: bigbound}
below, which has very little to do with differential algebra at all -- it is about linear operators on an affine complex algebra.
Its proof will use estimates that we derive in Lemma~\ref{lem: generalintersection} on the number of solutions to certain systems of polynomial equations over the complex numbers, given that the system has only finitely many solutions.

We will use the following B\'ezout inequality from intersection theory.
It is well known, in fact, that in the statement below one can replace $N^{d+1}$ by $N^d$, but we are unaware of a proper reference and the weaker bound that we give is sufficient for our purposes.

\begin{fact}\label{intersection}
Suppose $X\subseteq \mathbb C^d$ is the zero set of a system of polynomial equations of degree at most $N$.
Then the number of zero-dimensional irreducible components of $X$ is at most $N^{d+1}$.
\end{fact}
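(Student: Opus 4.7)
My approach would be to reduce the arbitrary system $f_1,\ldots,f_s$ to a system of exactly $d$ polynomials via generic $\mathbb{C}$-linear combinations, pass to projective space, and apply the classical B\'ezout theorem. Let $p_1,\ldots,p_r$ denote the zero-dimensional irreducible components of $X=V(f_1,\ldots,f_s)$. First I would take $d$ generic $\mathbb{C}$-linear combinations $g_i=\sum_j c_{ij}f_j$ for $1\le i\le d$. The key claim is that, for a Zariski-generic choice of the matrix $C=(c_{ij})$, each $p_k$ remains an isolated zero of $V(g_1,\ldots,g_d)$. This is because at each $p_k$ the ideal $(f_1,\ldots,f_s)$ is $\mathfrak{m}_{p_k}$-primary in the local ring $\mathcal{O}_{\mathbb{A}^d,p_k}$, so the set of matrices $C$ such that the $g_i$ still generate an $\mathfrak{m}_{p_k}$-primary ideal is a nonempty Zariski-open subset of the parameter space; intersecting these open conditions over the finite collection $p_1,\ldots,p_r$ still gives a nonempty open set, from which any $C$ works.

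Next, I would homogenize $g_1,\ldots,g_d$ with respect to an auxiliary variable $X_0$ to obtain homogeneous polynomials $G_1,\ldots,G_d\in\mathbb{C}[X_0,\ldots,X_d]$ of degree at most $N$, and consider their common zero locus $\widetilde X\subseteq\mathbb{P}^d$. If $\widetilde X$ is zero-dimensional, classical projective B\'ezout yields $|\widetilde X|\le N^d$, and since each $p_k$ lies in the affine chart $\{X_0\ne 0\}\subset \mathbb{P}^d$, this immediately bounds $r\le N^d\le N^{d+1}$. When $\widetilde X$ has positive-dimensional components lying at infinity, I would perturb: for a generic tuple $(H_1,\ldots,H_d)$ of homogeneous forms with $\deg H_i=\deg G_i$ and a generic scalar $\epsilon$, the perturbed system $(G_i+\epsilon H_i)$ is a proper complete intersection in $\mathbb{P}^d$, to which B\'ezout applies. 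By persistence of isolated zeros under small deformations (flatness of the resulting $1$-parameter family of zero-schemes), each $p_k$ deforms to a distinct isolated zero of the perturbed system. Hence the perturbed intersection has at least $r$ points, bounded by $N^d$.

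The main obstacle I anticipate is making the genericity arguments fully rigorous — in particular, establishing that a generic choice of $d$ linear combinations genuinely preserves all isolated points of $V(f_1,\ldots,f_s)$ simultaneously, and that the perturbation at infinity does not lose track of these points in the passage from the affine to the projective setting. The slack between the optimal bound $N^d$ and the stated bound $N^{d+1}$ is convenient: it can absorb, for instance, replacing the ``$d$ generic combinations'' by $d+1$ combinations, or a cruder multiplicity-counting step, which would simplify the genericity analysis at the cost of a multiplicative factor of $N$.
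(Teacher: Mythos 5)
Your approach is sound in outline, but it takes a genuinely different route from the paper's, and the two are worth contrasting. The paper avoids the genericity and perturbation issues you flag entirely by invoking two pieces of machinery: Kronecker's theorem, which gives $d+1$ (not $d$) $\mathbb{C}$-linear combinations $g_1,\dots,g_{d+1}$ of the $f_i$ with $V(g_1,\dots,g_{d+1})$ \emph{exactly} equal to $V(f_1,\dots,f_s)$ as a set, and Heintz's affine degree inequality $\deg(Y\cap Z)\le\deg(Y)\deg(Z)$, which applies to arbitrary, possibly improper, intersections. Since the degree of a closed set is by definition the sum of degrees of all its irreducible components, and isolated points each contribute $1$, the bound $\deg X\le N^{d+1}$ is read off immediately without ever passing to $\mathbb{P}^d$ or worrying about components at infinity. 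Your route instead uses $d$ generic combinations — which only preserve the isolated points, not the variety — followed by classical projective B\'ezout; because B\'ezout for $d$ hypersurfaces in $\mathbb{P}^d$ requires the intersection to be finite, you are forced into the perturbation-and-flatness argument at infinity. That argument can be made rigorous (the local ring at each $(p_k,0)$ in the total space over the $\epsilon$-line is a one-dimensional complete intersection, hence Cohen--Macaulay, hence flat over the line, so the isolated zeros persist for small $\epsilon$), and it would in fact yield the sharper bound $N^d$, as the paper itself acknowledges is attainable. But it is considerably more delicate than the paper's argument, and you correctly identified the two places where care is needed: (i) the simultaneous genericity statement over all $p_k$ — true because the good locus for each $p_k$ is open (by upper semicontinuity of fibre dimension in the family over the matrix space $C$) and nonempty (by prime avoidance in the regular local ring, which is an integral domain), and a finite intersection of dense opens is dense; and (ii) the passage to infinity, where the total space over the $\epsilon$-line is not flat globally (components over $\epsilon=0$ exist at infinity) but is flat near each $(p_k,0)$, which is all that is needed. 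Your suggested fallback of using $d+1$ combinations is precisely what the paper does, and in that case the degree-theoretic bookkeeping makes the projectivization and perturbation unnecessary.
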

\begin{proof}
We define the degree, ${\rm deg}(Y)$, of an irreducible Zariski closed subset $Y$ of $\mathbb{C}^d$ of dimension $r$ to be the supremum of the number of points in $Y\cap H_1\cap \cdots \cap H_r$, where $H_1,\ldots ,H_r$ are $r$ affine hyperplanes with the property that 
$Y\cap H_1\cap \cdots \cap H_r$ is finite.
In general, the degree of a Zariski closed subset $Y$ of $\mathbb{C}^d$ is defined to be the sum of the degrees of the irreducible components of $Y$.
In particular, if $f(x_1,\ldots ,x_d)\in \mathbb{C}[x_1,\ldots ,x_d]$ has total degree $D$ then the hypersurface $V(f)$ has degree $D$, and a point has degree one.
If $Y$ and $Z$ are Zariski closed subsets of $\mathbb{C}^d$ then ${\rm deg}(Y\cap Z)\le {\rm deg}(Y)\cdot {\rm deg}(Z)$, see Heintz~\cite[Theorem 1]{Hei}.

Now let $f_1,\ldots,f_{s}\in \mathbb{C}[x_1,\ldots,x_d]$ be of degree at most $N$ such that $X=V(f_1\ldots,f_{s})$.
By Kronecker's Theorem\footnote{We could not find a very good reference for Kronecker's Theorem in this form, but it can be seen as a special case of Ritt's~\cite[Chapter~VII, $\S$17]{ritt}.
Michael Singer pointed this out to us in a private communication in which he has also supplied a direct proof.}
there are $g_1,\ldots,g_{d+1}\in \mathbb{C}[x_1,\ldots,x_d]$ which are $\mathbb{C}$-linear combinations of the $f_i$'s such that $V(g_1,\ldots,g_{d+1})=V(f_1,\ldots,f_s)$.
In particular, $X=V(g_1)\cap \cdots \cap V(g_{d+1})$ has degree at most $N^{d+1}$ and so the number of zero dimensional components of $X$ is at most $N^{d+1}$. 
\end{proof}

The following is an easy exercise on the Zariski topology of $\mathbb{C}^d$.

\begin{lemma}
Suppose $Y$ and $Z$ are Zariski closed sets in $\mathbb{C}^d$
and suppose that $Y\setminus Z$ is finite.  Then $|Y\setminus Z|$ is bounded by the number of zero-dimensional irreducible components of $Y$.
\label{lem: YZ}
\end{lemma}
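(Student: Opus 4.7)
The plan is to decompose $Y$ into its irreducible components and argue that only the zero-dimensional components can contribute to $Y\setminus Z$. Write $Y = Y_1 \cup \cdots \cup Y_r$ as the irredundant decomposition into irreducible components. Each point of $Y\setminus Z$ lies in some $Y_i$, so it suffices to classify, for each $i$, what $Y_i \setminus Z$ can look like.

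The key observation is this dichotomy: either $Y_i \subseteq Z$, in which case $Y_i$ contributes nothing to $Y\setminus Z$, or $Y_i \not\subseteq Z$. In the latter case, $Y_i \cap Z$ is a proper Zariski closed subset of the irreducible variety $Y_i$, hence has dimension strictly smaller than $\dim Y_i$. If $\dim Y_i \geq 1$, this forces $Y_i \setminus (Y_i \cap Z)$ to be a nonempty Zariski open subset of a positive-dimensional irreducible variety, and in particular infinite. But $Y_i \setminus Z \subseteq Y \setminus Z$ is finite by hypothesis, a contradiction. Hence $Y_i$ must be zero-dimensional, i.e.\ a single point.

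It follows that every point of $Y\setminus Z$ is an isolated (zero-dimensional) irreducible component of $Y$, and distinct points correspond to distinct such components. Therefore $|Y\setminus Z|$ is at most the number of zero-dimensional irreducible components of $Y$, as required. There is no real obstacle here; the whole argument is a direct application of irreducibility and the fact that a proper closed subset of an irreducible variety has strictly smaller dimension.
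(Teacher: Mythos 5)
Your proof is correct and follows essentially the same route as the paper: decompose $Y$ into irreducible components, observe that any positive-dimensional component not contained in $Z$ would contribute infinitely many points to $Y\setminus Z$, and conclude that only zero-dimensional components (i.e.\ isolated points) of $Y$ can lie in $Y\setminus Z$.
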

\begin{proof}
We write $Y=Y_1\cup \cdots \cup Y_m$, with $Y_1,\ldots ,Y_m$ irreducible and $Y_i\not\subseteq Y_j$ for $i\neq j$.  Then 
$$Y\setminus Z = (Y_1\setminus Z)\cup \cdots \cup (Y_m\setminus Z).$$ 
Since $Y_i\cap Z$ is a Zariski closed subset of $Y_i$, it is either equal to $Y_i$ or it has strictly smaller dimension than $Y_i$.  In particular, if $Y_i$ is positive dimensional, then $Y_i\setminus Z$ must be empty, since otherwise $Y\setminus Z$ would be infinite.  Thus $|Y\setminus Z|\le |\{ i\colon Y_i~{\rm is~a~point}\}|$ and so the result follows.
\end{proof}

This is the main counting lemma.

\begin{lemma}
\label{lem: generalintersection}
Let $n$, $d$, and $N$ be natural numbers and suppose that $X\subseteq \mathbb{C}^{n+d}$ is the zero set of a system of polynomials of the form
$$\sum_{i=1}^n P_i(y_1,\ldots ,y_d)x_i + Q(y_1,\ldots ,y_d),$$ where $P_1,\ldots ,P_n,Q\in\mathbb{C}[y_1,\ldots ,y_d]$ are polynomials of degree at most $N$.  If $X$ is finite then $|X|\le \big((n+1)N\big)^{d+1}$.
\end{lemma}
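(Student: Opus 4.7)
My plan is to project $X$ onto its last $d$ coordinates and then argue via rank conditions on an auxiliary matrix. Let $\pi\colon\mathbb{C}^{n+d}\to\mathbb{C}^d$ denote the projection onto $y=(y_1,\dots,y_d)$, and set $Y:=\pi(X)$. Write $M(y)$ for the $s\times n$ coefficient matrix whose $(j,i)$-entry is the polynomial $P_i(y)$ appearing in the $j$-th defining equation, and let $v(y)$ be the $s\times 1$ column of the corresponding $Q$'s, so the defining system reads $M(y)x+v(y)=0$. First I would observe that the fibre of $\pi$ over any point is either empty or an affine subspace of $\mathbb{C}^n$; since $X$ is finite, every non-empty fibre must be a single point, so $|X|=|Y|$ and moreover $\mathrm{rank}\,M(y)=n$ for every $y\in Y$.

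Next, I would recognise $Y$ as a locally closed set defined by polynomials of degree at most $(n+1)N$. Let
\[Y_1:=\{y\in\mathbb{C}^d:\text{every }(n+1)\times(n+1)\text{ minor of }[M(y)\mid v(y)]\text{ vanishes}\},\]
and let $Z$ be the Zariski closed set where all $n\times n$ minors of $M(y)$ vanish. Then $Y_1$ is the locus of $y$ with $\mathrm{rank}[M\mid v](y)\leq n$, while $Z$ is the locus with $\mathrm{rank}\,M(y)<n$. A standard linear-algebra check shows that $Y=Y_1\setminus Z$: indeed, $y\in Y$ iff the affine-linear system has a (unique) solution, which is iff $\mathrm{rank}\,M(y)=\mathrm{rank}[M(y)\mid v(y)]=n$. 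Since each entry of $[M\mid v]$ has degree at most $N$ in $y$, each $(n+1)\times(n+1)$ minor has degree at most $(n+1)N$, and so $Y_1$ is cut out in $\mathbb{C}^d$ by polynomials of degree at most $(n+1)N$.

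Finally, since $Y$ is finite by hypothesis, Lemma~\ref{lem: YZ} applied to the Zariski closed sets $Y_1$ and $Z$ will give that $|Y|$ is bounded by the number of zero-dimensional irreducible components of $Y_1$, and then Fact~\ref{intersection} will bound that number by $((n+1)N)^{d+1}$. Combined with $|X|=|Y|$, this yields the desired inequality. The main subtle point is the identification $Y=Y_1\setminus Z$: the naive inclusion $Y\subseteq Y_1$ is insufficient because $Y_1$ may have positive-dimensional components arising from $y$'s where $M$ drops in rank and the system becomes degenerate, so it is essential to excise these with $Z$ before invoking Lemma~\ref{lem: YZ}.
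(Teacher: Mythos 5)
Your proof is correct and follows essentially the same path as the paper: project to the $y$-coordinates, note the nonempty fibres are singletons, realise the image as the difference of two determinantal loci (where the augmented matrix has rank at most $n$ versus where the coefficient matrix has rank less than $n$), and then apply Lemma~\ref{lem: YZ} followed by Fact~\ref{intersection}. The only cosmetic difference is that the paper first reduces explicitly to the case where the number of equations exceeds $n$ before forming the minors, whereas your write-up absorbs those degenerate cases implicitly.
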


\begin{remark}
We will be using this lemma in a context where $N=1$ and $d$ is fixed.
So the point is that the bound grows only polynomially in $n$.
\end{remark}

\begin{proof}[Proof of Lemma~\ref{lem: generalintersection}]
Let the defining equations of $X$ be
$$\sum_{i=1}^n P_{i,j}(y_1,\ldots ,y_d)x_i + Q_j(y_1,\ldots ,y_d)$$ for $j=1,\ldots ,m$ and let $\pi: \mathbb{C}^{n+d}\to \mathbb{C}^d$ be the map
$$(x_1,\ldots ,x_n,y_1,\ldots ,y_d)\mapsto (y_1,\ldots ,y_d).$$
So $X_0:=\pi(X)$ is the set of points $\alpha=(\alpha_1,\ldots ,\alpha_d)\in \mathbb{C}^d$ for which the system of equations
\begin{equation}
\label{system}
\sum_{i=1}^n P_{i,j}(\alpha)x_i + Q_j(\alpha) = 0
\end{equation}
for $j=1,\ldots ,m$, has a solution.
Now suppose $X$ is finite.
Then for $\alpha\in X_0$, $\pi^{-1}(\alpha)\cap X$ must be a single point as it is a finite set defined by affine linear equations.
So $|X|=|X_0|$, and it suffices to count the size of $X_0$.
Moreover, $X_0$ is precisely the set of $\alpha$ such that~(\ref{system}) has a unique solution.

Note that if $n>m$ then for every $\alpha$, either~(\ref{system}) has no solution or it has infinitely many solutions.
So, assuming that $X_0$ is nonempty, we may assume that $n\leq m$.
But if $n=m$ then off a proper Zariski closed set of $\alpha$ in $\mathbb C^d$ the system~(\ref{system}) has a unique solution -- contradicting that $X_0$ is finite.
So $n<m$.

Let $A(y_1,\ldots ,y_d)$ denote the $m\times (n+1)$ matrix whose $j$-th row is the row
$$[P_{1,j}(y_1,\ldots ,y_d),~\ldots ~,P_{n,j}(y_1,\ldots ,y_d), ~-Q_j(y_1,\ldots ,y_d)]$$ and let 
$B(y_1,\ldots ,y_d)$ denote the $m\times n$ matrix obtained by deleting the $(n+1)$-st column of $A$.
We have that $\alpha\in X_0$ if and only if the last column of $A(\alpha)$ is in the span of the column space of $B(\alpha)$; equivalently, $A(\alpha)$ and $B(\alpha)$ must have the same rank and this rank is necessarily $n$.

Let $Y$ be the set of all $\alpha$ such that every $(n+1)\times(n+1)$ minor of $A(\alpha)$ vanishes.
So $\alpha\in Y$ says that the rank of $A(\alpha)$ is $\leq n$.
Let $Z$ be the set of all $\alpha$ such that every $n\times n$ minor of $B(\alpha)$ vanishes.
So $\alpha\in Z$ means the rank of $B(\alpha)< n$.
Hence $X_0=Y\setminus Z$.
Since each $(n+1)\times (n+1)$ minor of $A(y_1,\ldots ,y_d)$ has degree at most $(n+1)N$, we see from Fact~\ref{intersection} that the number of zero-dimensional irreducible components of $Y$ is at most $((n+1)N)^{d+1}$.
By Lemma~\ref{lem: YZ}, $|X_0|\leq((n+1)N)^{d+1}$.
\end{proof}

We now give the main conclusion of this subsection.  In order to obtain the desired estimates, we will work with products of vector spaces and so we give some notation.  Given a field $k$ and an associative $k$-algebra $A$ if $V$ and $W$ are $k$-vector subspaces of $A$, we define $VW$ to be the span of all products $vw$ with $v\in V$ and $w\in W$.  Since $A$ is associative, it is easily checked that $(VW)U=V(WU)$ for subspaces $V$, $W$, and $U$ of $A$.   We may thus write $VWU$ unambiguously and so in the case that $V$ is a vector space and $n\ge 1$, we then take $V^n$ to be $V\cdot V\cdots V$, where there are $n$ copies of $V$ inside the product.    

\begin{proposition}
Suppose $A$ is an affine $\mathbb C$-algebra, $L_1,\ldots, L_m$ are $\mathbb C$-linear operators on $A$, and $V$ and $W$ are finite-dimensional $\mathbb C$-linear subspaces of $A$.
Let $X$ be the set of $f\in V$ for which $\frac{L_j(f)}{f}\in W$ for all $j=1,\ldots ,m$.
Then the image of $X$ in the projectivisation $\mathbb{P}(V)$ is either uncountable or has size at most $(\dim V)^{2+m\dim W}$.
\label{lem: bigbound}
\end{proposition}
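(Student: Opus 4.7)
The plan is to encode the defining condition of $X$ as an algebraic variety $X'\subseteq\mathbb{C}^{\dim V+m\dim W}$, deduce via Chevalley's theorem that the image of $X$ in $\mathbb{P}(V)$ is constructible --- hence either finite or uncountable over $\mathbb{C}$ --- and handle the finite case by a direct application of Lemma~\ref{lem: generalintersection}.

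First I would fix bases $\{v_1,\dots,v_n\}$ of $V$ and $\{w_1,\dots,w_d\}$ of $W$, where $n=\dim V$ and $d=\dim W$, together with a finite-dimensional $\mathbb{C}$-subspace $U\subseteq A$ containing every $L_j(v_i)$ and every product $w_kv_i$. Writing $f=\sum_i x_iv_i$ and encoding the putative $L_j(f)/f$ as $\sum_k y_{j,k}w_k$, the condition $L_j(f)=\bigl(\sum_k y_{j,k}w_k\bigr)f$ expands, in a chosen basis of $U$, into a system of scalar equations each of the form $\sum_{i=1}^n P_i(y_{j,\cdot})x_i=0$ with $P_i$ of degree at most $1$ in the $y$-variables. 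Let $X'\subseteq\mathbb{C}^{n+md}$ denote the resulting variety. Because $A$ is a domain, multiplication by a nonzero $f$ is injective on $W$, so for each $f\in X\setminus\{0\}$ the tuple $y$ is uniquely determined; combined with the invariance of $X'$ under scaling of the $x$-coordinates, this produces a well-defined surjection $X'\cap\{x\neq 0\}\to\pi(X)\subseteq\mathbb{P}(V)$, $(x,y)\mapsto[x]$, where $\pi(X)$ denotes the image of $X$ in $\mathbb{P}(V)$.

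By Chevalley's theorem $\pi(X)$ is constructible in $\mathbb{P}(V)(\mathbb{C})$, and a constructible subset of complex projective space is either finite or positive-dimensional --- the latter always having uncountably many $\mathbb{C}$-points --- which gives the first alternative of the proposition. To handle the finite case, I would pick representatives $f_1,\dots,f_K\in V$ and a linear form $\ell$ on $V$ with $\ell(f_i)\neq 0$ for each $i$ (possible since the $f_i$ are finitely many). After rescaling so that $\ell(f_i)=1$ and a linear change of basis making $\ell=x_n$, I dehomogenize by substituting $x_n=1$ in the defining equations of $X'$. The resulting system in the $n-1+md$ variables $(x_1,\dots,x_{n-1},y)$ still has each equation of the shape $\sum_{i=1}^{n-1}P_i(y)x_i+Q(y)=0$ with $P_i,Q$ of degree $\leq 1$ in $y$, and by construction its solution set is in bijection with $\pi(X)$. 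Lemma~\ref{lem: generalintersection}, applied with $n_{\mathrm{lem}}=n-1$, $d_{\mathrm{lem}}=md$, and $N=1$, then yields $|\pi(X)|\leq n^{md+1}\leq n^{md+2}=(\dim V)^{2+m\dim W}$, as required.

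The main obstacle is verifying that the bilinear equations defining $X'$, and their dehomogenization under a generic linear form, genuinely fit the precise shape demanded by Lemma~\ref{lem: generalintersection}, and that the integral-domain hypothesis on $A$ correctly forces uniqueness of $y$ given $f$ so that the count of dehomogenized solutions really equals $|\pi(X)|$. Once these structural checks are in place, the proposition follows from a routine application of the counting lemma.
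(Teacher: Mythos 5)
Your proof is correct, and the heart of it is the same as the paper's: encode the membership condition as a system of equations that are bilinear in the $V$-coordinates $x$ and (after tacking on a constant) affine-linear in the $W$-coordinates $y$, and then feed the dehomogenized system into Lemma~\ref{lem: generalintersection} with $N=1$. The one genuine difference lies in how the two arguments pass from $\mathbb{P}(V)$ to an affine chart. The paper slices $\mathcal{T}$ by every coordinate hyperplane $x_q=1$, bounds each slice $\mathcal{T}_q$ by $n^{m\dim W + 1}$ via Lemma~\ref{lem: generalintersection} (with the built-in uncountable-or-bounded dichotomy coming from Fact~\ref{intersection} and Lemma~\ref{lem: YZ}), and unions over $q$ to get the extra factor of $n$. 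You instead invoke Chevalley's theorem to see that the image in $\mathbb{P}(V)$ is constructible, hence finite or uncountable; in the finite case you choose a \emph{single} linear form $\ell$ missing the finitely many points and apply the counting lemma once, giving $n^{m\dim W + 1}$, which is a slightly sharper bound than the paper's $n^{m\dim W + 2}$ and certainly suffices. Your route is cleaner but pays for it with one more piece of machinery (constructibility of images); the paper's route is marginally more elementary but overcounts by a factor of $n$. Your structural checks (uniqueness of $y$ from the domain hypothesis, and the stability of the bilinear shape under a linear change of $x$-coordinates and the substitution $x_n=1$) are exactly the points that need verifying, and they do all go through.
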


\begin{remark}
One should think here of $W$ as fixed and $V$ as growing.
So the Proposition gives a bound that grows only polynomially in $\dim V$.
\end{remark}

\begin{proof}[Proof of Proposition~\ref{lem: bigbound}]
Let $\{r_1,\ldots, r_n\}$ be a basis for $V$ and let $\{s_1,\ldots ,s_d\}$ be a basis for $W$.
We are interested in the set $\mathcal{T}$ of $(x_{1},\ldots ,x_{n})\in \mathbb{C}^n$ for which there exists $(y_{1,j},\ldots ,y_{d,j})\in \mathbb{C}^d$, for $j=1,\ldots ,m$, such that
$\displaystyle
L_j\big(\sum_{i=1}^n x_{i} r_i \big) = \big(\sum_{i=1}^n x_{i} r_i \big)(y_{1,j}s_1+\cdots +y_{d,j}s_d)$.
Since the $L_j$ are linear, this becomes 
\begin{equation}
\sum_{i=1}^n x_i L_j(r_i) = \big(\sum_{i=1}^n x_{i} r_i \big)(y_{1,j}s_1+\cdots +y_{d,j}s_d).
\label{eq: system}
\end{equation}
We point out that if $(x_1,\ldots ,x_n)\in \mathcal{T}$ then so is $(\lambda x_1,\ldots ,\lambda x_n)$ for $\lambda$ in $\mathbb{C}$.
As we are only interested in solutions in $\mathbb{P}(\mathbb{C}^n)$, we will let $\mathcal{T}_q$ denote the set of elements $(x_1,\ldots ,x_n)$ in $\mathcal{T}$ with $x_q=1$ and we will bound the size of each $\mathcal{T}_q$.

Note that if $(x_1,\ldots ,x_n)\in \mathcal{T}_q$ for some $q$, then as  $A$ is an integral domain, $\big(\sum_{i=1}^n x_{i} r_i \big)\neq 0$, and $s_1,\ldots ,s_d$ is a basis for $W$, we see that for $j=1,\ldots ,m$ there is necessarily a unique solution $(y_{1,j},\ldots ,y_{d,j})\in \mathbb{C}^d$ such that Equation (\ref{eq: system}) holds.
So the cardinality of $\mathcal{T}_q$ is the same as the set of solutions to Equations (\ref{eq: system}) in $\mathbb C^{n+md}$ with $x_q=1$.
It is this latter set that we count.

Let $w_1,w_2,\ldots ,w_{\ell}$ be a basis for $\span_{\mathbb C}\big(VW\cup\bigcup_{j=1}^mL_j(V)\big)$.
We thus have expressions 
$L_j(r_i) = \sum_{i,j,p} \alpha_{i,j,p} w_p$ and 
$r_is_k = \sum_{i,p,k} \beta_{i,k,p} w_p$ for $j=1,\ldots ,m$, $i=1,\ldots ,n$, $k=1,\ldots ,d$.
Combining these expressions with Equation (\ref{eq: system}), we see that for $j\in \{1,\ldots ,m\}$, we have
$$\sum_{i=1}^n x_i\left(\sum_{p} \alpha_{i,j,p} w_p \right) = \sum_{i,k} x_i y_{k,j} \left( \sum_p \beta_{i,k,p} w_p\right).$$
In particular, if we extract the coefficient of $w_p$, we see that for $j\in \{1,\ldots ,m\}$ and $p\in \{1,\ldots ,\ell\}$, we have
$\displaystyle \sum_{i=1}^n  \alpha_{i,j,p} x_i = \sum_{i,k} x_i y_{k,j} \beta_{i,k,p}$.
Imposing the condition that $x_q=1$ we obtain the system of equations
\begin{equation*}
\label{eq: newsystem2}
\alpha_{q,j,p}+ \sum_{i\neq q} \alpha_{i,j,p} x_i- \sum_{k} y_{k,j}\beta_{q,k,p}-  \sum_{i\neq q }\sum_{k}  x_i y_{k,j} \beta_{i,k,p} = 0.
\end{equation*}
for $j=1,\ldots ,m$ and $p=1,\ldots ,\ell$.
This system of equations can be described as affine linear equations in $\{x_1,\ldots ,x_n\}\setminus \{x_q\}$ whose coefficients are polynomials in $y_{k,j}$, $1\le k\le d$, $1\le j\le m$ of total degree at most one, and hence by Lemma~\ref{lem: generalintersection} the number of solutions is either infinite---in which case it is uncountable as it has a component of dimension bigger than or equal to one and we are working over $\mathbb C$---or is at most $n^{md+1}$.
Thus the size of the union of $\mathcal{T}_q$ as $q$ ranges from $1$ to $n$ is either uncountable or has size at most $n^{md+2}$, as desired.
\end{proof}

\medskip
\subsection{The case of principal ideals}
\label{ufd}
Here we deal with the case of Theorem~\ref{thm: main} when there are infinitely many principal prime differential ideals.

\begin{proposition}
Let $A$ be an integrally closed affine $\mathbb{C}$-algebra with $\mathbb C$-linear derivations $\delta_1,\ldots ,\delta_m$.
Suppose that there exists an infinite set of elements $r_1,r_2,\ldots $ of $A$ such that $\delta_j(r_i)/r_i\in A$ for $j=1,\ldots ,m$ and $i\ge 1$ and such that their images in $(A\setminus \{0\})/\mathbb{C}^*$ generate a free abelian semigroup.
Then the field of constants of $\big(\Frac(A),\delta_1,\dots,\delta_m\big)$ is strictly bigger than $\mathbb C$.
\label{prop: space}
\end{proposition}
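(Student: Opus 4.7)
The plan is to argue by contradiction, assuming that the common constant field of $(\Frac(A),\delta_1,\ldots,\delta_m)$ equals $\mathbb C$. Applying Proposition~\ref{prop: W} to each $\delta_j$ individually and summing the resulting spaces yields a finite-dimensional $\mathbb C$-subspace $U\subseteq A$ such that whenever $g\in A\setminus\{0\}$ satisfies $\delta_j(g)/g\in A$ for all $j$, in fact $\delta_j(g)/g\in U$ for every $j$. For integers $N,d\geq 1$, let $V_{N,d}$ denote the $\mathbb C$-span of the monomials $r_1^{a_1}\cdots r_N^{a_N}$ with $\sum_i a_i\leq d$. The free-abelian-semigroup hypothesis forces the $M:=\binom{N+d}{N}$ such monomials to be pairwise projectively distinct in $V_{N,d}$, and each satisfies $\delta_j(f)/f=\sum_i a_i\,\delta_j(r_i)/r_i\in A$, hence by the choice of $U$ lies in the set $X:=\{f\in V_{N,d}\setminus\{0\}:\delta_j(f)/f\in U\text{ for all }j\}$. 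Thus the image of $X$ in $\mathbb P(V_{N,d})$ has cardinality at least $M$.

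Now apply Proposition~\ref{lem: bigbound} with $L_j:=\delta_j$ and $W:=U$: either this image is uncountable, or it has cardinality at most $(\dim V_{N,d})^{2+m\dim U}$. To rule out the first alternative under the standing assumption, consider the closed subvariety $Z:=\{(f,u_1,\ldots,u_m)\in V_{N,d}\oplus U^m:\delta_j(f)=fu_j\text{ for all }j\}$, which is $\mathbb C^*$-invariant in the $f$-coordinate. Since $A$ is a domain, the $u_j$ are uniquely determined whenever $f\neq 0$, so the second projection $Z\to U^m$ descends to a regular map $\bar\psi\colon (X\setminus\{0\})/\mathbb C^*\to U^m$ sending $[f]$ to $(\delta_j(f)/f)_j$. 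If the image of $X$ in $\mathbb P(V_{N,d})$ were uncountable then, $\mathbb C$ being an uncountable algebraically closed field, that image would contain a positive-dimensional closed irreducible subvariety $\bar Y\subseteq\mathbb P(V_{N,d})$. Two distinct points $[f_1],[f_2]\in\bar Y$ with $\bar\psi([f_1])=\bar\psi([f_2])$ would give a non-trivial common $\delta$-constant $f_1/f_2\in\mathbb C$, so $\bar\psi|_{\bar Y}$ would be injective; but any morphism from a positive-dimensional projective variety to an affine variety is constant on each irreducible component, a contradiction.

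It therefore remains to contradict the bounded alternative. Let $K:=\dim A$. Since the subalgebra $\mathbb C[r_1,\ldots,r_N]\subseteq A$ is a commutative affine $\mathbb C$-algebra of Krull dimension at most $K$, the standard polynomial bound on its Hilbert function with respect to the generating subspace $\mathbb C\cdot 1+\sum_i \mathbb C r_i$ gives $\dim V_{N,d}=O_N(d^K)$ as $d\to\infty$, so $(\dim V_{N,d})^{2+m\dim U}=O_N(d^{K(2+m\dim U)})$, whereas $M=\binom{N+d}{N}=\Theta_N(d^N)$. Fixing any $N>K(2+m\dim U)$ and then taking $d$ sufficiently large therefore produces $M>(\dim V_{N,d})^{2+m\dim U}$, contradicting the bound. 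The main obstacle is the combinatorial heart of Step B: one must exploit that, because the $r_i$ need not be linearly independent in $A$, the growth of $\dim V_{N,d}$ in $d$ is controlled by the fixed Krull dimension of $A$ rather than by $N$, so that by taking enough generators one can force the $\Theta(d^N)$ monomial count to overshoot the Proposition~\ref{lem: bigbound} bound. Step A is by comparison a standard rigidity argument---regular maps from complete positive-dimensional varieties to affine space are component-wise constant---whose only delicate point is the domain-theoretic uniqueness of the $u_j$'s that realises the logarithmic-derivative map as a regular morphism.
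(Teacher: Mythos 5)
Your Step~B (the counting argument showing that the number of projectively distinct monomials eventually overwhelms the polynomial bound of Proposition~\ref{lem: bigbound}) is essentially the same as the paper's: the paper fixes $q:=(1+\mathrm{Kdim}\,A)(2+m\dim W)$, takes $V=U^{N(q+1)}$ with $U\supseteq\{r_1,\dots,r_{q+1}\}$, and compares $(N+1)^{q+1}$ against $(N(q+1))^q$ using the same Hilbert/GK growth bound. That part of your argument is fine.

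The problem is Step~A. You argue by contradiction, so you must \emph{rule out} the uncountable alternative of Proposition~\ref{lem: bigbound}, and your mechanism is: an uncountable constructible subset of $\mathbb{P}(V_{N,d})$ contains a positive-dimensional \emph{closed} irreducible subvariety $\bar Y\subseteq\mathbb{P}(V_{N,d})$, on which the logarithmic-derivative map $\bar\psi$ would be both injective and (being a morphism from a complete variety to the affine $U^m$) constant. Both halves of this fail. First, an uncountable constructible subset of projective space need not contain any positive-dimensional closed subvariety of the ambient projective space --- $\mathbb{A}^1\subset\mathbb{P}^1$ is the basic counterexample. The image of $X$ in $\mathbb{P}(V_{N,d})$ is the projection of the closed conical incidence variety $\mathbb{P}(Z)\subset\mathbb{P}(V_{N,d})\times U^m$ along a map with \emph{affine} fibres $U^m$, which is not a closed map, so there is no reason for the image to be closed. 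Second, even if you had a positive-dimensional irreducible locally closed piece $\bar Y^\circ$ inside the image, the rigidity principle you invoke applies only to the complete variety $\bar Y$, whereas $\bar\psi$ is defined only on $\bar Y^\circ$; an injective regular map from a quasi-projective curve to affine space is of course unproblematic. So the ``no new constants $\Rightarrow$ image is small'' direction is exactly where the real content lies, and your rigidity argument does not supply it. (You flag Step~A as ``standard'' and Step~B as the obstacle; it is the other way around.)

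The paper instead runs the implication in the contrapositive form and makes genuine use of the uncountability. Having established that the image of $\mathcal X:=\{r\in V: r\neq 0,\ \delta_j(r)/r\in A\ \forall j\}$ in $\mathbb{P}(V)$ is uncountable, it considers the family of principal differential ideals $\mathcal S=\{rA:r\in\mathcal X\}$ and feeds it to Lemma~\ref{lem: diffcenter}, whose conclusion is precisely a nonconstant element of $\Frac(A)$ killed by all the $\delta_j$. The only nontrivial hypothesis to check is $\bigcap\mathcal S=(0)$, and this is where uncountability is used: if some nonzero $a$ lay in every $rA$, then every $r\in\mathcal X$ would have $\Div(r)$ supported on the fixed finite set consisting of the codimension-one components at infinity together with the support of the positive part of $\Div(a)$; there are only countably many divisors on a fixed finite support, and two elements with equal divisor differ by a scalar, contradicting uncountability of $\mathcal X/\mathbb{C}^*$. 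If you want to keep your proof-by-contradiction framing, you should replace your Step~A by this divisor argument plus Lemma~\ref{lem: diffcenter} (or something equivalent); the rigidity shortcut does not close.
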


\begin{proof}
Denote the multiplicative semigroup of $A\setminus \{0\}$ generated by $r_1,r_2,\ldots $ by~$\mathcal T$.
As the operator $x\mapsto\delta_j(x)/x$ transforms multiplication into addition, we have that $\delta_j(r)/r\in A$ for all $r\in \mathcal{T}$ and $j\in \{1,\ldots ,m\}$.
By Proposition \ref{prop: W} there is thus a finite-dimensional subspace $W$ of $A$ such that $\delta_j(r)/r\in W$ for all $r\in \mathcal{T}$ and $j\in \{1,\ldots ,m\}$.

Let $q:=(1+{\rm Kdim}(A))(2+m{\rm dim}(W))$, where ${\rm Kdim}(A)$ is the Krull dimension of $A$.  
We pick a finite-dimensional vector subspace $U$ of $A$ that contains $r_1,\ldots ,r_{q+1}$.
We claim that for $N$ sufficiently large the image of
$$\mathcal{X}_N:= \{r\in U^{N(q+1)}:\delta_j(r)/r\in W, j=1,\dots,m\}$$
in $\mathbb{P}(U^{N(q+1)})$ is uncountable.
Indeed, if it were not, then by Proposition~\ref{lem: bigbound} its size would be bounded by
$\displaystyle \left({\rm dim}(U^{N(q+1)})\right)^{2+m\dim W}$.
Basic results in Gelfand-Kirillov dimension (see \cite[Theorem 4.5 (a)]{krauselenagan}) give that
${\rm dim}(U^{N(q+1) } )< (N (q+1))^{1+\rm Kdim(A)} $ for all $N$ sufficiently large.
Hence by choice of $q$ we get that the size of the image of $\mathcal{X}_N$ in $\mathbb{P}(U^{N(q+1)})$ is eventually at most
$\displaystyle (N(q+1))^q$.
On the other hand, for each $0\le i_1,\ldots ,i_{q+1}\le N$ we have
$r_1^{i_1}\cdots r_{q+1}^{i_{q+1}}\in U^{N(q+1)}\cap \mathcal{T}$, and by assumption these give rise to distinct elements of $\mathcal{X}_N$ whose images in $\mathbb{P}(U^{N(q+1)})$ are also distinct.
So the size of the image of $\mathcal{X}_N$ in $\mathbb{P}(U^{N(q+1)})$ is at least $(N+1)^{q+1}$.
Comparing the degrees of these polynomials in $N$, gives a contradiction for large $N$.

Thus, fixing $N$ sufficiently large, setting $V:=U^{N(q+1)}$, and
$$\mathcal{X}:=\{r\in V:r\neq 0,\delta_j(r)/r\in A, j=1,\dots,m\},$$
we have shown that the image of $\mathcal{X}$ in $\mathbb{P}(V)$ is uncountable.
%\marginpar{\tiny Reworded things a bit. -R}
Let $\mathcal{S}$ denote the set of all ideals of the form $rA$ where $r\in \mathcal{X}$.
We claim that Lemma \ref{lem: diffcenter} applies to $\mathcal S$, giving us the sought for differential constant $f\in\Frac(A)\setminus\mathbb C$ which would complete the proof of the proposition.
Indeed, condition~(1), that each $I\in\mathcal S$ is differential, holds because $I=rA$ with $\delta_j(r)/r\in A$ for all $j=1,\dots,m$.
Condition~(3), that each ideal in $\mathcal S$ has nontrivial intersection with the finite-dimensional space $V$, holds by construction: each $I\in \mathcal S$ is generated by an element of $V$.
It remains only to prove condition~(2); that 
$\bigcap\mathcal{S}  =(0)$.

To see this, note that $A$ is the ring of regular functions on some irreducible affine normal variety $X$, and $X$ embeds as a dense open subset of a projective normal variety $Y$.  Let $Z_1,\ldots ,Z_s$ denote the irreducible components of $Y\setminus X$ of codimension one.
For every $f\in A$, the negative part of the $\Div(f)$ is supported on $\{Z_1,\ldots ,Z_s\}$.
Suppose, toward a contradiction, that there is a nonzero $a\in rA$ for all $r\in\mathcal X$.
If  $\{V_1,\ldots ,V_t\}$ is the support of the positive part of $\Div(a)$, then the positive part of $\Div(r)$ is supported on $\{V_1,\dots,V_t\}$ also, for all $r\in \mathcal X$.
So for all $r\in\mathcal X$, $\Div(r)$ is supported on $\{Z_1,\ldots ,Z_s,V_1,\ldots ,V_t\}$.
But there are only countably many divisors supported on this finite set.
If two nonzero elements of $\mathbb{C}(Y)$ have the same associated divisor then their ratio is regular on $Y$ and hence necessarily in $\mathbb{C}^*$.
It follows that the image of $\mathcal{X}$ in $\mathbb{P}(V)$ is necessarily countable, a contradiction.  
\end{proof}

\subsection{The Proof of Theorem \ref{thm: main}}
\label{redufd}
In this section we prove Theorem \ref{thm: main}.  To do this, we need a lemma that shows we can reduce to the principal case.  This lemma appears to be something that should be in the literature, but we have not encountered this result before.
\begin{lemma} Let $k$ be a finitely generated extension of $\mathbb{Q}$, let $A$ be a finitely generated commutative $k$-algebra that is a domain.  Then there is a nonzero $s\in A$ such that $A_s=A[1/s]$ is a unique factorization domain.
\label{lem: 1}
\end{lemma}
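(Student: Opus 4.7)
The plan is to reduce to the normal case, show that the divisor class group is finitely generated, and then localize away enough to kill it.

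\emph{Step 1: Reduce to $A$ normal.} Let $A'$ be the integral closure of $A$ in $\Frac(A)$. Since $A$ is an affine domain over a field, $A'$ is a finitely generated $A$-module, say $A'=A[b_1,\ldots,b_n]$ with each $b_j\in\Frac(A)$. Writing $b_j=c_j/d_j$ with $c_j,d_j\in A$ and taking $s_0:=d_1\cdots d_n$, one has $A[1/s_0]=A'[1/s_0]$; in particular this localization is normal. Replacing $A$ by $A'[1/s_0]$, we may assume $A$ is a normal noetherian domain, and it suffices to localize this $A$ further to reach a UFD. Since a normal noetherian domain is a UFD if and only if every height-one prime is principal, equivalently if and only if its divisor class group vanishes, our goal becomes to find $s$ such that $\mathrm{Cl}(A[1/s])=0$.

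\emph{Step 2: Finite generation of $\mathrm{Cl}(A)$.} Let $X=\spec A$ and choose a projective normal compactification $\overline{X}\supseteq X$. Let $D_1,\ldots,D_r$ be the codimension-one irreducible components of $\overline{X}\setminus X$. The localization sequence in divisor theory gives a surjection $\mathrm{Cl}(\overline{X})\twoheadrightarrow \mathrm{Cl}(X)$ whose kernel is generated by $[D_1],\ldots,[D_r]$. Using Hironaka resolution of singularities (we are in characteristic zero), choose a proper birational morphism $\pi\colon\widetilde{X}\to\overline{X}$ with $\widetilde{X}$ smooth projective over $k$. Since $X$ is normal, its singular locus has codimension $\geq 2$, so $\mathrm{Cl}(X)=\mathrm{Cl}(X^{\mathrm{sm}})$, and $X^{\mathrm{sm}}$ sits as an open subset of $\widetilde{X}$; hence $\mathrm{Cl}(X)$ is a quotient of $\mathrm{Pic}(\widetilde{X})$ by the classes of finitely many divisors. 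It therefore suffices to show $\mathrm{Pic}(\widetilde{X})$ is finitely generated. After possibly passing to a finite extension of $k$ to guarantee a rational point (which keeps $k$ finitely generated over $\mathbb{Q}$ and does not affect the conclusion via a standard descent), the exact sequence
\[
0\to \mathrm{Pic}^0_{\widetilde{X}/k}(k)\to \mathrm{Pic}(\widetilde{X})\to \mathrm{NS}(\widetilde{X})
\]
combined with the Mordell--Weil theorem for abelian varieties over finitely generated fields of characteristic zero (applied to the Picard variety $\mathrm{Pic}^0_{\widetilde{X}/k}$) and the Néron--Severi theorem (finite generation of $\mathrm{NS}$) shows that $\mathrm{Pic}(\widetilde{X})$ is finitely generated. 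Therefore so is $\mathrm{Cl}(A)$.

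\emph{Step 3: Localize to trivialize the class group.} Choose height-one primes $\mathfrak{p}_1,\ldots,\mathfrak{p}_m$ of $A$ whose classes generate $\mathrm{Cl}(A)$, pick nonzero $s_i\in\mathfrak{p}_i$, and set $s:=s_0\cdot s_1\cdots s_m$. For any nonzero $s\in A$, the class group of $A[1/s]$ is the quotient of $\mathrm{Cl}(A)$ by the subgroup generated by the classes of those height-one primes containing $s$; since $s\in\mathfrak{p}_i$ for each $i$, this quotient is trivial. As $A[1/s]$ is still a normal noetherian domain, triviality of the class group gives that $A[1/s]$ is a UFD, as required.

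The main obstacle is the finite generation of the class group in Step 2; all the arithmetic input (the hypothesis that $k$ is finitely generated over $\mathbb{Q}$) is consumed there, via Mordell--Weil for abelian varieties over finitely generated fields. The remaining steps are formal manipulations with normalization and the divisor class group localization sequence.
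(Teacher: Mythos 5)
Your proof is correct and follows essentially the same strategy as the paper: reduce to the normal case, embed $\spec A$ in a projective compactification, invoke Mordell--Weil--N\'eron--Severi over the finitely generated base field to conclude $\operatorname{Cl}$ is finitely generated, and localize at elements of height-one primes generating the class group. The only real difference is that the paper cites a packaged statement (Lang, Cor.~6.6.2) giving finite generation of the class group of a normal projective variety directly, whereas you reprove it by passing through a Hironaka resolution and the exact sequence $0\to\operatorname{Pic}^0\to\operatorname{Pic}\to\operatorname{NS}$; your brief ``standard descent'' remark when extending $k$ to acquire a rational point would need a sentence of justification (e.g.\ via the norm map $\operatorname{Cl}(X_{k'})\to\operatorname{Cl}(X)$) but is not a substantive gap.
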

\begin{proof} 
We recall that a noetherian integral domain $A$ is a UFD if and only if $X = {\rm Spec}(A)$ is normal
and ${\rm Cl}(X) = 0$ \cite[II.6.2]{Har}.  By replacing $A$ by $A[1/f]$ for some nonzero $f\in A$ we may assume that $A$ is integrally closed.  Let $X={\rm Spec}(A)$.  Note that $X$ is quasi-projective and hence is an open subset of an irreducible projective scheme $Y$.  We may pass to the normalization of $Y$ if necessary (this does not affect $X$) and assume that $X$ is an open subset of a normal projective scheme $Y$.  Note that $Y$ is noetherian, integral, and separated and so ${\rm Cl}(Y)$ surjects on ${\rm Cl}(X)$ \cite[Proposition II.6.5]{Har}.  From a version of the Mordell-Weil-N\'eron-Severi theorem (see \cite[Corollary 6.6.2]{Lang} for details), we see that ${\rm Cl}(Y)$ is a finitely generated abelian group, and so ${\rm Cl}(X)$ must be too.

It follows that there exist height one prime ideals $P_1,\ldots, P_r$ of $A$ such that if $P$ is a height one prime ideal of $A$ then
there are integers $a_1,\ldots ,a_r$ such that $[V(P)]=\sum_{i=1}^r a_i [V(P_i)]$ in ${\rm Cl}(X)$, where for a height one prime $Q$, $[V(Q)]$ denotes the image of the irreducible subscheme of $X$ that corresponds to $Q$ in ${\rm Cl}(X)$.   Let $s$ be a nonzero element of $P_1\cap P_2\cap \cdots \cap P_r$.  Then the equation
$[V(P)]=\sum_{i=1}^r a_i [V(P_i)]$ gives that $P \prod_{\{i : a_i<0\}} P_i^{a_i} = f \prod_{\{i : a_i>0\}} P_i^{a_i}$ for some nonzero rational function $f$.  Then, passing to the localization $A_s$ we see that
$$P_s = \left(P \prod_{\{i : a_i<0\}} P_i^{-a_i}\right) \otimes_A A_s=\left(f \prod_{\{i : a_i>0\}} P_i^{a_i}\right)\otimes_A A_s= (fA)_s,$$ where we regard $fA$ as a fractional ideal.  Since $P_s\subseteq A_s$, we see that $f\in A_s$ and so $P_s=fA_s$ is principal for each height one prime ideal $P$ of $A$.  It follows that all height one primes of $A_s$ are principal and hence $A_s$ is a unique factorization domain.
\end{proof}

We are finally ready to prove Theorem \ref{thm: main}.

 \begin{proof}[Proof of Theorem \ref{thm: main}]
We have an affine $\mathbb C$-algebra $A$ with $\mathbb C$-derivations $\delta_1,\dots,\delta_m$, and with infinitely many height one prime differential ideals.
Suppose, toward a contradiction, that the field of contants of $\big(\Frac(A),\delta_1,\dots,\delta_m\big)$ is $\mathbb C$.

Note that the derivations extend uniquely by the quotient rule to any localisation $A[\frac{1}{f}]$, and that since any such $f$ can only be contained in finitely many height one primes, this localisation also has infinitely many height one prime differential ideals.
Therefore, localising appropriately, we may assume that $A$ is integrally closed.

Next we write $A$ in the form $A_0\otimes_k\mathbb C$ for some finitely generated subfield $k$ of $\mathbb C$, and an affine $k$-subalgebra $A_0$ of $A$ such that the $\delta_j$ restrict to $k$-linear derivations on $A_0$.
This can be accomplished as follows:
write $A$ as a quotient of a polynomial ring $A=\mathbb{C}[t_1,\ldots ,t_d]/I$ where $I=\langle f_1,\dots, f_r\rangle$.
So the $x_i:=t_i+I$ generate $A$ as a $\mathbb C$-algebra.
For each $i,j$ we have $\delta_j(x_i)=q_{i,j}(x_1,\ldots ,x_d)$ for some polynomials $q_{i,j}\in\mathbb{C}[t_1,\ldots ,t_d]$.
Let $k$ denote the field generated by the coefficients of $f_1,\ldots ,f_r$ and by the coefficients of the $q_{i,j}$, and set $A_0:=k[x_1,\dots,x_d]$.

We may assume that ${\rm Frac}(A_0)\cap \mathbb{C}=k$.
Indeed, let $K:={\rm Frac}(A_0)\cap \mathbb{C}$.
Since $K$ is a subfield of the finitely generated field ${\rm Frac}(A_0)$, we have by~\cite[Theorem 11]{Vamos} that $K$ is finitely generated.
We can now replace $k$ by $K$, and so $A_0$ by $K[x_1,\dots,x_d]$.

Next we argue that $A_0$ has infinitely many height one prime differential ideals.
This will use our assumption that the field of constants of $\Frac(A)$ is just $\mathbb C$.

We claim that if $P$ is a nonzero prime differential ideal of $A$ then $P\cap A_0$ is also nonzero.
To see this, we pick $0\neq y=\sum_{i=1}^e a_i\otimes \lambda_i \in P$ with $a_1,\ldots ,a_e\in A_0$ nonzero, $\lambda_1,\dots,\lambda_e\in\mathbb C$ nonzero, and $e$ minimal.
If $e=1$ then we have $y\cdot \lambda_1^{-1}\in A_0\cap P$ and there is nothing to prove.
Assume $e>1$.
We have
$\delta_j(y)=\sum_{i=1}^e \delta_j(a_i)\otimes \lambda_i \in P$ for $j=1,\ldots ,m$.   This gives
$$\sum_{i=1}^e (a_i \delta_j(a_e)-a_e\delta_j(a_i))\otimes \lambda_i = \delta_j(a_e)y-a_e\delta_j(y)\in P.$$ 
Since the $i=e$ term above is zero, the minimality of $e$ implies that $\delta_j(a_e)y-a_e\delta_j(y)$ must be zero.
%\marginpar{\tiny The proof here has been simplified, is it still correct? -R}
So we have that
$\delta_j(y a_e^{-1})=0$ for all $j=1,\ldots m$.
By assumption, $y=\gamma a_e$ for some $\gamma\in\mathbb C$.
So $a_e\in P\cap A_0$, as desired.

Suppose $P$ is a height one prime differential ideal in $A$.
Then $P\cap A_0$ is a prime differential ideal in $A_0$.
Since it is nonzero, it has height at least one.
To see that $P\cap A_0$ has height one, suppose that there is some nonzero prime ideal $Q$ of $A_0$ with $Q\subsetneq P\cap A_0$.  Then $QA\cap A_0=Q$ since $A$ is a free $A_0$-module.  If we now look at the set $\mathcal{I}$ of ideals $I$ of $A$ with $QA\subseteq I\subseteq P$ such that $I\cap A_0=Q$, then $\mathcal{I}$ is non-empty since $QA$ is in $\mathcal{I}$.  It follows that $\mathcal{I}$ has a maximal element, $J$.  Then $J$ is a nonzero prime ideal of $A$ that is strictly contained in $P$, contradicting the fact that $P$ has height one.
Hence $P\cap A_0$ has height one.

Moreover, if $P$ is a height one prime differential ideal in $A$ then $P$ is a minimal prime containing $(P\cap A_0)A$, so only finitely many other prime differential ideals in $A$ can have the same intersection with $A_0$ as $P$.
So the infinitely many height one prime differential ideals in $A$ give rise to infinitely many height one prime differential ideals in $A_0$.

By Lemma \ref{lem: 1} there is some nonzero $s\in A_0$ such that $B:=A_0[\frac{1}{s}]$ is a UFD.
As before, the infinitely many height one prime differential ideals of $A_0$ give rise to infinitely many height one prime differential ideals of the localisation $B$.
But as $B$ is a UFD, these ideals are principal.
We obtain an infinite set of pairwise coprime irreducible elements $r_1,r_2,\ldots $ of $B$ such that $\delta_j(r_i)/r_i\in B$ for $j=1,\ldots ,m$ and $i\ge 1$.
We now note that $B\subseteq A[1/s]$.  Furthermore, the images of the $r_i$ necessarily generate a free abelian semigroup in $(A[1/s]\setminus \{0\})/\mathbb{C}^*$, since if some non-trivial product of the $r_i$ were in $\mathbb{C}^*$ then it would be in $B\cap \mathbb{C}^*\subseteq {\rm Frac}(A_0)\cap \mathbb{C}^*=k^*$, which is impossible since the $r_i$ are pairwise coprime elements of the UFD $B$. 
Proposition~\ref{prop: space} now applies to $A[\frac{1}{s}]$ (which is integrally closed as $A$ is), and gives an $f\in {\rm Frac}(A[1/s])\setminus \mathbb{C}$ such that $\delta_j(f)=0$ for $j=1,\ldots ,m$.
But as ${\rm Frac}(A[1/s])={\rm Frac}(A)$,
this contradicts our assumption on $A$.
\end{proof}

\medskip
\section{A weak Poisson Dixmier-Moeglin equivalence}
\label{sec: weak}
\noindent
We now show that while the Poisson Dixmier-Moeglin equivalence need not hold in general, a weaker variant does hold.

\begin{theorem}
Let $A$ be a complex affine Poisson algebra.
For a Poisson prime ideal $P$ of $A$, the following are equivalent:
\begin{enumerate}
\item $P$ is rational;
\item $P$ is primitive;
\item the set of Poisson prime ideals $Q\supseteq P$ with ${\rm ht}(Q)={\rm ht}(P)+1$ is finite.
\end{enumerate}
\label{thm: weak}
\end{theorem}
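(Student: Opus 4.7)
The plan is to first reduce to the case $P=(0)$ by replacing $A$ with $A/P$, noting that affine $\mathbb C$-algebras are catenary so that height-one Poisson primes of $A/P$ correspond to Poisson primes $Q\supseteq P$ in $A$ with $\mathrm{ht}(Q)=\mathrm{ht}(P)+1$. After this reduction, I must show that in a complex affine Poisson domain, being Poisson rational, being Poisson primitive, and the set of height-one Poisson primes being finite, are mutually equivalent. The equivalence $(1)\Leftrightarrow(2)$ is already in hand: Theorem~\ref{thm: ratprim} gives $(1)\Rightarrow(2)$, while $(2)\Rightarrow(1)$ is the standard \cite[1.7(i), 1.10]{Oh}.

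For $(1)\Rightarrow(3)$, I fix a set of $\mathbb C$-algebra generators $\{x_1,\ldots,x_m\}$ of $A$ and form the $\mathbb C$-linear derivations $\delta_i:=\{-,x_i\}$. By statement~(i) of Section~\ref{sect-diffpoi}, the prime Poisson ideals of $A$ are precisely the prime differential ideals of $(A,\delta_1,\ldots,\delta_m)$, and by statement~(iv) combined with the algebraic closedness of $\mathbb C$, Poisson rationality of $(0)$ means that every element of $\Frac(A)$ annihilated by all the $\delta_i$ already lies in $\mathbb C$. The contrapositive of Theorem~\ref{thm: main} then delivers the desired finiteness of height-one prime differential ideals.

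For $(3)\Rightarrow(1)$, I argue by contrapositive. Suppose $Z_p(\Frac(A))$ properly contains $\mathbb C$ and pick $f\in Z_p(\Frac(A))\setminus\mathbb C$. Writing $f=g/h$ with $g,h\in A$, the Poisson bracket extends canonically to the localisation $B:=A[1/h]$, in which $f$ is a non-constant Poisson constant. For each $\lambda\in\mathbb C$, the principal ideal $(f-\lambda)B=(g-\lambda h)B$ is preserved by $\{a,-\}$ for every $a\in B$, hence is a Poisson ideal. Since $f$ is non-constant, Chevalley's theorem ensures that for all but finitely many $\lambda$ the ideal $(f-\lambda)B$ is a proper nonzero ideal, which by Krull's principal ideal theorem has a height-one minimal prime, and by Lemma~\ref{lem: decomp} that minimal prime is Poisson. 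Distinct $\lambda$ produce disjoint sets of such minimal primes, because a prime containing both $f-\lambda$ and $f-\lambda'$ would contain the unit $\lambda-\lambda'$. Pulling back along $A\hookrightarrow B$ therefore yields infinitely many height-one Poisson primes of $A$, contradicting~(3).

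The main obstacle is the direction $(1)\Rightarrow(3)$, which is entirely shouldered by Theorem~\ref{thm: main}; once that is available the remaining implications are essentially formal given the localisation observation and the prime decomposition result of Lemma~\ref{lem: decomp}.
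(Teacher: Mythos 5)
Your proposal is correct and follows essentially the same route as the paper: reduce to $P=(0)$, handle $(1)\Leftrightarrow(2)$ by citation, obtain $(1)\Rightarrow(3)$ from Theorem~\ref{thm: main} via the derivations $\delta_i=\{-,x_i\}$ and statements~(i),~(iv) of Section~\ref{sect-diffpoi}, and obtain $\neg(1)\Rightarrow\neg(3)$ by fibring a non-constant Poisson central $f$ over $\mathbb{C}$ after localising and applying Krull's principal ideal theorem with Lemma~\ref{lem: decomp}. The only differences are that you spell out a couple of points the paper leaves implicit (the catenary property justifying the reduction to $P=(0)$, and Chevalley's theorem for why $(f-\lambda)B$ is proper for all but finitely many $\lambda$), which is harmless and arguably clearer.
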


\begin{proof}
We have already have shown the equivalence of (1) and (2).  It remains to prove the equivalence of (1) and (3).  By replacing $A$ by $A/P$ if necessary, we may assume that $P=(0)$.  Note that if (1) does not hold then we have a non-constant $f\in {\rm Frac}(A)$ in the Poisson centre.  We show that (3) cannot hold; the level sets of $f$ over $\mathbb C$ will give rise to infinitely many height one Poisson primes.
We write $f=a/b$ with $a,b\in A$ with $b\neq 0$.  Let $B$ be the localisation $A_b$.  Then it is sufficient to show that there are infinitely many prime ideals in $B$ of height one that are Poisson prime.
For each $\lambda\in \mathbb{C}$, we have a Poisson ideal $I_{\lambda}:=(a/b-\lambda )B$.
Since $f$ is non-constant, for all but finitely many $\lambda\in \mathbb C$, $I_\lambda$ is a proper principal ideal.
By Krull's principal ideal theorem, we have a finite set of height one prime ideals above $I_{\lambda}$, each of which is a Poisson prime ideal by Lemma \ref{lem: decomp}.
We note that if a prime ideal $P$ contains $I_{\alpha}$ and $I_{\beta}$ for two distinct complex numbers $\alpha$ and $\beta$ then $P$ contains $\alpha-\beta$, which is a contradiction.  It follows that $B$ has an infinite set of height one 
Poisson prime ideals and so (3) does not hold.  

Conversely, suppose that (1) holds.  Let $x_1,\ldots ,x_m$ be generators for $A$ as a $\mathbb{C}$-algebra, and consider the derivations $\delta_i(y)=\{y,x_i\}$.
The rationality of $(0)$ means that the constant field of $\big(\Frac(A),\delta_1,\dots,\delta_m\big)$ is $\mathbb C$, see statement~(iv) of Section~\ref{sect-diffpoi}.
It follows by Theorem~\ref{thm: main} that there are only finitely many height one prime differential ideals of $A$.
Hence there are only finitely many height one prime Poisson ideals of $A$, as desired.
\end{proof}

As a corollary we will show that the Poisson-Dixmier Moeglin equivalence holds in dimension~$\leq 3$.
But first a lemma which says that the ``Poisson points and curves'' are never Zariski dense.

\begin{lemma}
Let $A$ be a complex affine Poisson algebra of Krull dimension $d$ on which the Poisson bracket is not trivial.
Then the intersection of the set of Poisson prime ideals of height $\ge d-1$ is not trivial.
\label{lem: int}
\end{lemma}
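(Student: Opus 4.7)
The approach will be to show that every Poisson prime $P$ of height at least $d-1$ must contain the ideal $J \subseteq A$ generated by all brackets $\{x_i, x_j\}$ of a fixed set $x_1, \dots, x_m$ of $\mathbb C$-algebra generators of $A$. Since the Poisson bracket on $A$ is nontrivial, some $\{x_i, x_j\}$ is nonzero, so $J \neq (0)$, and the intersection of all such Poisson primes will then contain $J$ and hence be nontrivial. Thus the whole matter reduces to checking the inclusion $J \subseteq P$ for each individual $P$.

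To verify this inclusion, I would pass to the quotient $B := A/P$ with its induced Poisson structure, and show that the bracket on $B$ is identically zero, since this immediately forces the images of the $\{x_i,x_j\}$ in $B$ to vanish. Now $B$ is a complex affine domain of Krull dimension at most one, and I would split into two cases. If $\dim B = 0$, then $B$ is a field finitely generated over $\mathbb C$, hence (as $\mathbb C$ is algebraically closed) equal to $\mathbb C$, on which the only bracket is trivial. If $\dim B = 1$, then $K := \Frac(B)$ has transcendence degree one over $\mathbb C$, so the space $\operatorname{Der}_{\mathbb C}(K)$ is a one-dimensional $K$-vector space. After extending the Poisson structure from $B$ to $K$ via the quotient rule, I would then argue that any skew biderivation on $K$ corresponds to an element of the space $\bigwedge_K^2 \operatorname{Der}_{\mathbb C}(K)$, which vanishes because $\operatorname{Der}_{\mathbb C}(K)$ is one-dimensional. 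Hence the bracket on $K$ vanishes identically, and so does its restriction to $B$.

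I do not anticipate any serious obstacle: the whole argument is essentially the dimension count that a one-dimensional module has vanishing second exterior power. The only minor bookkeeping step is the well-definedness of the extension of the induced bracket on $B$ to a skew biderivation on $\Frac(B)$, which is routine via the quotient rule.
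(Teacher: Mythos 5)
Your proof is correct, and it takes a genuinely different route from the paper's.

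The paper argues directly: assuming some $\{a,b\}\notin P$, it uses the non-dominance of $\spec(A/P)\to\mathbb A^2_{\mathbb C}$ (since $\dim A/P\le 1$) to produce $f(a,b)\in P$, then repeatedly applies $\{a,-\}$ to differentiate out the $b$-dependence and get $h(a)\in P$ for some nonzero $h\in\mathbb C[x]$, uses algebraic closedness to conclude $a-\lambda\in P$, and finally applies $\{-,b\}$ to reach the contradiction $\{a,b\}\in P$. Your version instead shows the induced bracket on $B=A/P$ vanishes identically by a dimension count: since $K=\Frac(B)$ has transcendence degree $\le 1$ over $\mathbb C$, the module $\Omega_{K/\mathbb C}$ (equivalently $\operatorname{Der}_{\mathbb C}(K)$) is at most one-dimensional over $K$, so the induced alternating $K$-bilinear form on $\Omega_{K/\mathbb C}$ lies in $\operatorname{Hom}_K(\bigwedge^2_K\Omega_{K/\mathbb C},K)=0$. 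This is conceptually cleaner and is in fact slightly more general: it works verbatim over any base field of characteristic zero, because in the zero-dimensional case $B$ is a finite extension of $k$ and hence has no nonzero $k$-derivations, while in the one-dimensional case the exterior-power vanishing needs no algebraic closedness (the paper's proof uses $\mathbb C$ algebraically closed to split $h$). The trade-off is that the paper's argument is entirely elementary (it avoids Kähler differentials and the identification of a skew biderivation with an alternating form on $\Omega_{K/\mathbb C}$), whereas yours leans on that standard but less self-contained machinery. Both are valid; the core dimension obstruction ($\dim A/P\le 1$) is the same in each.
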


\begin{proof}
We claim that every Poisson prime ideal of height at least $d-1$ must contain $\{a,b\}$ for all $a, b\in A$.
Let $P$ be a Poisson prime of height $\ge d-1$ and suppose, towards a contradiction, that $\{a,b\}\not \in P$.
Now, since $A/P$ has Krull dimension at most one, the morphism $\spec(A/P)\to\mathbb A_\mathbb C^2$ that is dual to the ring homomorphism given by the composition $\mathbb{C}[a,b]\hookrightarrow A\to A/P$ is not dominant.
That is, there is $0\neq f\in \mathbb{C}[x,y]$ such that $f(a,b)\in P$.  We now claim that there is some nonzero polynomial $h(x)$ such that $h(a)\in P$.  To see this, observe that if $f(x,y)$ is a polynomial in $x$ then there is nothing to prove; otherwise, it has non-constant partial derivative with respect to $y$.  Applying the derivation $\{a,-\}$ gives $\frac{\partial{f}}{\partial y}(a,b)\{a,b\}\in P$.
Since $\{a,b\}\not\in P$, we get that $\frac{\partial{f}}{\partial y}(a,b)\in P$.
Iterating if necessary, we then see that there is a nonzero polynomial $h(x)\in \mathbb{C}[x]$ such that $h(a)\in P$, as claimed.
Now $h(x)$ cannot be constant since it is nonzero and $h(a)\in P$ and $P$ is proper.  Therefore $h(x)$ splits into linear factors.  Since $P$ is a prime ideal, we see that there is some $\lambda\in\mathbb{C}$ such that $a-\lambda\in P$.  But now we apply the operator $\{-,b\}$ to get that $\{a,b\}\in P$, which is a contradiction.  Thus every Poisson prime of height $\ge d-1$ contains $\{a,b\}$ for all $a,b\in A$. Since the Poisson bracket is not trivial, the result follows. 
\end{proof}

\begin{theorem} Let $A$ be a complex affine Poisson algebra of Krull dimension $\le 3$.
Then the Poisson Dixmier-Moeglin equivalence holds for $A$.
\label{lethree}
\end{theorem}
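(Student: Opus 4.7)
The plan is to establish the one outstanding inclusion $(3)\subseteq(2)$, that every Poisson rational ideal is Poisson locally closed; combined with the standard $(2)\subseteq(1)\subseteq(3)$ from~\cite{Oh} and Theorem~\ref{thm: ratprim}, this will yield the full equivalence. Given a Poisson rational $P$, I would pass to $A/P$ and reduce to the case $P=(0)$: by the definition of Poisson locally closed, the task is then to show that the intersection of all nonzero Poisson prime ideals of $A$ is itself nonzero.

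If the Poisson bracket on $A$ is identically zero, then $Z_p(\Frac A)=\Frac A$ and Poisson rationality forces $\Frac A$ to be algebraic over $\mathbb C$, whence $A=\mathbb C$ and the conclusion is trivial. So I may assume the bracket is non-trivial, and set $d:=\operatorname{Kdim}A\leq 3$.

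The main step is to split the nonzero Poisson spectrum into two pieces controlled, respectively, by Theorem~\ref{thm: weak} and Lemma~\ref{lem: int}. Theorem~\ref{thm: weak} applied to $(0)$ gives that the height-one Poisson primes of $A$ form a finite set $\{P_1,\dots,P_s\}$, and $\bigcap_i P_i$ is nonzero, being a finite intersection of nonzero primes in a domain (or equal to $A$ if $s=0$). If $d\leq 1$ this already exhausts the nonzero Poisson spectrum and we are done. For $d\in\{2,3\}$, the hypothesis $d\leq 3$ yields $d-1\leq 2$, so every Poisson prime of height $\geq 2$ has height $\geq d-1$; Lemma~\ref{lem: int} then produces a nonzero ideal $J:=\bigcap_{\operatorname{ht}(Q)\geq d-1}Q$, and plainly $J\subseteq\bigcap_{\operatorname{ht}(Q)\geq 2}Q$. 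Putting the pieces together, the intersection of all nonzero Poisson primes of $A$ contains the product $\bigl(\bigcap_i P_i\bigr)\cdot J$, which is a product of two nonzero ideals in the domain $A$, hence nonzero.

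No step is a serious obstacle; the heavy lifting is all in Theorem~\ref{thm: weak} and Lemma~\ref{lem: int}. The conceptual point worth highlighting is that the hypothesis $d\leq 3$ is precisely what makes the two classes \emph{height $1$} and \emph{height $\geq d-1$} cover all positive heights. For $d\geq 4$ an intermediate range $2\leq\operatorname{ht}(Q)<d-1$ of heights opens up, uncontrolled by either Theorem~\ref{thm: weak} or Lemma~\ref{lem: int}, which is exactly where the counterexamples of Corollary~\ref{counterexample} live.
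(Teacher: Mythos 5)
Your proposal is correct and follows essentially the same route as the paper: reduce to $P=(0)$, invoke Theorem~\ref{thm: weak} to get finiteness of the height-one Poisson primes, invoke Lemma~\ref{lem: int} (using $d-1\leq 2$) to get a nonzero element killed by all Poisson primes of height $\geq 2$, and observe that the intersection of all nonzero Poisson primes is therefore nonzero. The only difference is that you explicitly dispatch the trivial-bracket and $d\leq 1$ edge cases, which the paper leaves implicit; this is harmless tidying rather than a new idea.
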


\begin{proof}
In light of \cite[1.7(i), 1.10]{Oh} and Theorem \ref{thm: ratprim}, it is sufficient to show that if $P$ is a Poisson rational prime ideal of $A$ then $P$ is Poisson locally closed.  By replacing $A$ by $A/P$ if necessary, we may assume that $P=(0)$.  By Theorem \ref{thm: weak}, there are finitely many height one prime ideals of $A$ that are Poisson prime ideals.   By Lemma \ref{lem: int}, the intersection of prime ideals of height $\ge 2$ of $A$ that are Poisson prime ideals is nonzero.  It follows that the intersection of the nonzero Poisson prime ideals of $A$ is nonzero and hence we see that $(0)$ is Poisson locally closed, as desired.
\end{proof}

\medskip
\section{Arbitrary base fields of characteristic zero}
\noindent
So far we have restricted our attention to $\mathbb C$-algebras.
It is natural to ask whether our results, both positive and negative, extend to arbitrary base fields.
In this section we will show that everything except the fact that rationality implies primitivity, namely Theorem~\ref{thm: ratprim}, more or less automatically extends to arbitrary characteristic zero fields.

First a word about positive characteristic.
Note that if $A$ is a finitely generated commutative Poisson algebra over a field of characteristic $p>0$, then $a^p$ is in the Poisson centre for every $a\in A$, and in particular it can be shown that for a prime Poisson ideal $P$ of $A$, the notions of Poisson primitive, Poisson rational, and Poisson locally closed are all equivalent to the algebra $A/P$ being a finite extension of the base field. 
Thus we restrict our attention to base fields of characteristic zero.

Let us consider first the construction of Poisson algebras in which $(0)$ is rational but not locally closed.
This was done in Sections~\ref{diffalgsect} and~\ref{sect-poissoncounterexample}.
The only use of the complex numbers in Theorem~\ref{diffex} was that they form an algebraically closed field.
Starting, therefore, with an arbitrary field $k$ of characteristic zero, we obtain, over $L=k^{\alg}$, an affine $L$-algebra $R$ equipped with an $L$-linear derivation $\delta$, such that the field of constants of $\Frac(R)$ is $L$ and the intersection of all nontrivial prime differential ideals of $R$ is zero.
Now, as in the proof of Theorem~\ref{thm: main}, we can write $R=R_0\otimes_FL$ where $F$ is a finite extension of $k$ and $R_0$ is a differential affine $F$-subalgebra of $R$ such that $\Frac(R_0)\cap L=F$.
So the constant field of $\Frac(R_0)$ is $L$, and hence algebraic over $k$.
Since the intersection of a prime differential ideal in $R$ with $R_0$ is prime and differential in $R_0$, we get that the intersection of all prime differential ideals of $R_0$ is also trivial.
We can view $R_0$ as an affine $k$-algebra and that changes neither the fact about the constants of $\Frac(R_0)$, nor the fact about the intersection of the prime differential ideals of $R_0$.
Apply Proposition~\ref{lem: pb} to the $k$-algebra $R_0$ to see that $R_0[t]$ can be endowed with a Poisson bracket such that $(0)$ is not locally closed and the Poisson center of $\Frac\big(R_0[t]\big)$ is equal to the constant field of $R_0$ which is algebraic over $k$.
That is, $(0)$ is rational in $R_0[t]$.
We have thus proved the following generalisation of Corollary~\ref{counterexample}:

\begin{theorem}  Let $k$ be a field of characteristic zero and $d\ge 4$ be a natural number.  Then there exists an affine Poisson $k$-algebra of Krull dimension $d$ such that $(0)$ is Poisson rational but not Poisson locally closed.
\label{counterexample-k}
\end{theorem}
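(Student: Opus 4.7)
The plan is to reduce to the algebraically closed case already settled in Sections~\ref{diffalgsect} and~\ref{sect-poissoncounterexample}, via a descent argument — an approach essentially sketched in the paragraph preceding the theorem. First I would apply Theorem~\ref{diffex} not over $\mathbb{C}$ but over $L:=k^{\alg}$: the construction underlying Theorem~\ref{diffex} uses only that $\mathbb{C}$ is an algebraically closed field of characteristic zero admitting a positive transcendence degree function field $K/\mathbb C$ with an appropriate derivation and a simple non-isotrivial abelian variety over $K$, and all of these ingredients are available over any algebraically closed field of characteristic zero. This produces an affine $L$-algebra $R$ of Krull dimension $d-1\geq 3$ equipped with an $L$-linear derivation $\delta$ such that $\Frac(R)^{\delta}=L$ and the intersection of all nontrivial prime $\delta$-ideals of $R$ is zero.

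Next I would descend $R$ to an affine $k$-algebra $R_0$. Exactly as in the proof of Theorem~\ref{thm: main}, write $R=R_0\otimes_F L$, where $F$ is a finite-type (hence finite) extension of $k$ in $L$ and $R_0$ is an affine $F$-subalgebra of $R$ closed under $\delta$. Invoking \cite[Theorem~11]{Vamos} as there, I can then arrange $\Frac(R_0)\cap L=F$. Two properties must be inherited by $R_0$: (a) $\Frac(R_0)^{\delta}=F$, and (b) the intersection of the nontrivial prime $\delta$-ideals of $R_0$ is zero. For (a), any constant $f\in\Frac(R_0)$ is a fortiori a constant of $\Frac(R)$, hence lies in $L\cap\Frac(R_0)=F$. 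For (b), the minimal-tensor-length argument appearing in the proof of Theorem~\ref{thm: main} shows that every nontrivial prime $\delta$-ideal $P$ of $R$ meets $R_0$ nontrivially (this uses $\Frac(R_0)^{\delta}\subseteq L$ from~(a)); combined with the fact that nontrivial prime $\delta$-ideals of $R$ intersect to zero, this yields~(b).

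Finally, viewing $R_0$ as an affine $k$-algebra — still finitely generated since $F/k$ is finite, and still of Krull dimension $d-1$ since $F/k$ is integral — I would apply Proposition~\ref{lem: pb} to $(R_0,\delta)$ to obtain a Poisson bracket on $R_0[t]$, which is an affine Poisson $k$-algebra of Krull dimension $d$. By part~(1) of that proposition, the Poisson center of $\Frac\bigl(R_0[t]\bigr)$ equals the constant field $\Frac(R_0)^{\delta}=F$, which is algebraic over $k$, so $(0)$ is Poisson rational. By part~(2), each nontrivial prime $\delta$-ideal $P$ of $R_0$ yields a nontrivial Poisson prime ideal $PR_0[t]$, and by~(b) these intersect to zero, so $(0)$ is not Poisson locally closed. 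The main technical point is the descent step and in particular the verification of~(a) and~(b), but this is precisely the content already implemented in the proof of Theorem~\ref{thm: main}; once the right $F$ and $R_0$ are chosen via the Vámos argument, the rest is bookkeeping.
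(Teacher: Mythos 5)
Your proposal is correct and follows essentially the same route as the paper: invoke Theorem~\ref{diffex} over $L=k^{\alg}$, descend to a differential affine $F$-subalgebra $R_0$ with $\Frac(R_0)\cap L=F$ via the Vámos/tensor decomposition as in the proof of Theorem~\ref{thm: main}, and then apply Proposition~\ref{lem: pb} to $R_0[t]$. The only difference is that you spell out a couple of steps the paper glosses over — in particular, using the minimal-tensor-length argument to justify that nontrivial prime $\delta$-ideals of $R$ meet $R_0$ nontrivially, and noting that the constant field of $\Frac(R_0)$ is exactly $F$ (where the paper loosely writes ``$L$'') — which is the right thing to do.
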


Next we consider the positive statements, that is Theorem~\ref{thm: weak}.
First of all, the proof given there that if a Poisson prime ideal $P$ is contained in only finitely many Poisson prime ideals  of height ${\rm ht}(P)+1$ then $P$ is rational, works verbatim over an arbitrary field of characteristic zero.
The proof of the converse, on the other hand, uses both the uncountability and algebraic closedness of $\mathbb C$, because these are used in the proof of Proposition~\ref{prop: space}.
To deal with this, we require the following lemma, which shows that we can extend scalars and assume that the base field is algebraically closed and uncountable.  We note that given a Poisson bracket $\{-,-\}$ on a $k$-algebra $A$, there is a natural extension of $\{-,-\}$ to a Poisson bracket $\{-,-\}_F$ on $B=A\otimes_k F$ where $F$ is a field extension of $k$.  This is done by defining $\{a\otimes \alpha , b\otimes \beta\}= \{a,b\}\otimes \alpha\beta$ for $\alpha,\beta\in F$ and then extending via linearity.  We call the Poisson bracket $\{-,-\}_F$ the \emph{natural extension} of $\{-,-\}$ to $B$.  
 
\begin{lemma} Let $k$ be a field of characteristic zero and let $A$ be an affine $k$-algebra equipped with a Poisson bracket $\{-,-\}$.
Suppose that $k^{\alg}\cap {\rm Frac}(A)=k$ and that $(0)$ is a Poisson rational ideal of $A$.  Then for any algebraically closed field extension $F$ of $k$, the $F$-algebra $B:=A\otimes_k F$ is again a domain with $(0)$ a Poisson rational ideal with respect to the natural extension of $\{-,-\}$ to $B$.\label{lem: AB}
\end{lemma}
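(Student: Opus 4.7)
The plan is to prove the two assertions (integrity and rationality) separately. For integrity, I would observe that $k^{\alg}\cap\Frac(A)=k$ together with $\operatorname{char} k=0$ makes $\Frac(A)/k$ a regular field extension, so that $\Frac(A)\otimes_k F$ is a domain for every extension $F/k$; $k$-flatness of $F$ then embeds $B=A\otimes_k F$ inside $\Frac(A)\otimes_k F$, forcing $B$ itself to be a domain. For rationality, I would invoke statement~(iv) of Section~\ref{sect-diffpoi}: taking $\{x_1,\dots,x_m\}$ as a generating set of $A$ over $k$ (equivalently, of $B$ over $F$), the derivations $\delta_i=\{-,x_i\}$ extend $F$-linearly to $B$ and to $\Frac(B)$, and Poisson rationality of $(0)$ in $B$ is the statement that the common kernel of the $\delta_i$ on $\Frac(B)$ is algebraic over $F$. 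Since $F$ is algebraically closed this amounts to showing every common $\delta_i$-constant of $\Frac(B)$ lies in $F$. Any such constant already lives in $\Frac(A\otimes_k F_0)$ for some finitely generated subextension $F_0/k$ of $F$, so it is enough to prove that each such constant is algebraic over $F_0$; writing $F_0=L(\alpha)$ with $L=k(T_1,\dots,T_n)$ purely transcendental over $k$ and $\alpha$ algebraic over $L$, I would handle the two pieces in sequence.

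For the purely transcendental step, $\Frac(A\otimes_k L)=\Frac(A)(T)$ and each $\delta_i$ annihilates the $T_j$. Writing a $\delta_i$-constant $f$ as $p/q$ in lowest terms in the UFD $\Frac(A)[T]$, the identity $\delta_i(p)q=p\delta_i(q)$ combined with coprimality and the bound $\deg_T\delta_i(p)\leq\deg_T p$ forces $\delta_i(p)=c_ip$ and $\delta_i(q)=c_iq$ for a common $c_i\in\Frac(A)$. Comparing coefficients of monomials $T^\nu$, the pairwise ratios of the nonzero $\Frac(A)$-coefficients of $p$ are simultaneously $\delta_i$-constant for every $i$; by Poisson rationality of $(0)$ in $A$ combined with $k^{\alg}\cap\Frac(A)=k$ such common constants lie in $k$. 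Hence $p$ is a $\Frac(A)$-scalar multiple of an element of $k[T]$, and likewise for $q$; the leftover scalar is itself a $\delta_i$-constant of $\Frac(A)$ and so also in $k$, giving $f\in L$.

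For the algebraic step set $B_L:=A\otimes_k L$. The previous paragraph identifies the common $\delta_i$-constants of $\Frac(B_L)$ with $L$, and stability of regular extensions under purely transcendental base change ensures $L$ is algebraically closed in $\Frac(B_L)$; this guarantees that $A\otimes_k F_0=B_L\otimes_L F_0$ is a domain whose fraction field is a finite extension of $\Frac(B_L)$ of degree $[F_0:L]$. A $\delta_i$-constant $f\in\Frac(A\otimes_k F_0)$ then satisfies a monic minimal polynomial $f^n+c_{n-1}f^{n-1}+\cdots+c_0=0$ over $\Frac(B_L)$; applying $\delta_i$ and using $\delta_i(f)=0$ produces a polynomial relation of degree less than $n$ in $f$, which forces $\delta_i(c_j)=0$ and hence $c_j\in L$ for every $j$. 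Thus $f$ is algebraic over $L\subseteq F_0$, and since $F_0\subseteq F=F^{\alg}$ we conclude $f\in F$. The step I expect to require the most care is the purely transcendental case, specifically arranging the coefficient bookkeeping so that the eigenvalues in $\delta_i(p)=c_ip$ and $\delta_i(q)=c_iq$ genuinely agree (this is what lets the coefficient ratios be simultaneously $\delta_i$-constant and so land in $k$); the algebraic case then rests on the auxiliary field-theoretic fact that $L$ is algebraically closed in $\Frac(B_L)$, which is just the preservation of regularity under purely transcendental base change.
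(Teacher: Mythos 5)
Your proof is correct, but it follows a genuinely different route from the paper's. You decompose the finitely generated subextension $F_0/k$ into a purely transcendental tower $L=k(T_1,\dots,T_n)$ and a finite algebraic piece $L(\alpha)/L$, and handle each separately with classical differential-field arguments: a UFD/degree computation in $\Frac(A)[T]$ for the transcendental step, and the minimal-polynomial trick (apply $\delta_i$ to the minimal equation of a constant and use minimality to kill the coefficients) for the algebraic step, finishing with regularity considerations. The paper instead avoids any transcendental case at all: it starts with a putative nonconstant central element $u=b/c$ in $\Frac(B)$, reduces to a finitely generated $k$-subalgebra $R\subseteq F$, and then \emph{specializes} by choosing a maximal ideal $I\subset R$ (using the vanishing of the Jacobson radical and a $2\times 2$ determinant witnessing $b\notin Fc$) so that the image of $u$ in $\Frac(A\otimes_k L)$, with $L=R/I$ a finite extension of $k$, is still central and still not in $L$. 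It then expands $u=\sum f_i s_i$ in a $k$-basis of $L$ and, since $A\otimes_k L$ is free over $A$, concludes that each $f_i$ is Poisson central in $\Frac(A)$, contradicting rationality of $(0)$. The paper's specialization trick is shorter and sidesteps the coefficient bookkeeping you flag as the delicate point; your argument is more self-contained on the differential-algebra side and exposes the regularity input more explicitly. One small thing worth making explicit in your write-up: the final passage from ``$f$ is algebraic over $L\subseteq F$'' to ``$f\in F$'' uses that $F$ is algebraically closed in $\Frac(B)$, which holds because $F$ is algebraically closed and $B$ is a domain finitely generated over $F$ (so $\Frac(B)/F$ is regular); this is the analogue, one level up, of the regularity fact you already invoke for $L\subseteq\Frac(B_L)$.
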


\begin{proof} Since $k^{\alg}\cap {\rm Frac}(A)=k$, the $F$-algebra $B:=A\otimes_k F$ is again a domain.
The Poisson bracket on $A$ extends to a Poisson bracket $\{-,-\}_F$ on $B$ and we claim that $(0)$ is a Poisson rational ideal of $B$. 
Toward a contradiction, suppose that there exists $b/c\in {\rm Frac}(B)\setminus F$ that is in the Poisson centre, with $b,c\in B$, and $c$ nonzero.

We first show that we can witness this counterexample with a finite extension of $k$ rather than $F$.
There is a finitely generated $k$-subalgebra $R$ of $F$ such that $b,c\in A\otimes_k R$.
Let $a_1,\ldots ,a_n\in A$ and $r_1,\ldots ,r_n,s_1,\ldots ,s_n\in R$, some of which are possibly zero, be such that
$b=\sum_{i=1}^n a_i \otimes r_i$ and $c=\sum_{i=1}^n a_i \otimes s_i$.    Since $b\not \in Fc$, we have that there exist $i,j\in \{1,\ldots ,n\}$ such that the $2\times 2$ matrix whose first row is $(r_i,r_j)$ and whose second row is $(s_i,s_j)$ has nonzero determinant.  We let $\Delta\in R$ denote this nonzero determinant.  Since the Jacobson radical of $R$ is zero, there is some maximal ideal $I$ of $R$ such that $\Delta c\not \in A\otimes_k I$.   Then we have a surjection $A\otimes_k R \to A\otimes_k L$ where $L=R/I$ is a finite extension of $k$, and since $k^{\alg}\cap {\rm Frac}(A)=k$, $A\otimes_k L$ is a domain with $k^{\alg}\cap {\rm Frac}(A\otimes_k L) = L$.
By construction, $u:=(b+J)(c+J)^{-1}$ is in the Poisson centre of ${\rm Frac}(A\otimes_k L)$ and is not in $L$ since $\Delta\not\in I$.  

Now let $\{s_1,\ldots ,s_m\}\subseteq L$ be a basis for $L$ over $k$, and hence a basis for $A\otimes_k L$ as a finite and free $A$-module.
Then since ${\rm Frac}(A\otimes_k L)={\rm Frac}(A)\otimes_k L$, we have $u = \sum f_i s_i$ with $f_i\in {\rm Frac}(A)$.
As $u\notin L$, there exists some $f_i$ that is not in~$k$.
Now for any $x\in A$ we have
$\displaystyle 0=\{u,x\}=\sum \{f_i,x\} s_i$, and since the $s_i$ form a basis, we see that each $\{f_i,x\}=0$.
So all the $f_i$ are in the Poisson centre of $\Frac(A)$, contradicting the fact that ${\rm Frac}(A)$ has Poisson center~$k$.
\end{proof}

Now suppose that $A$ is an affine $k$-algebra equipped with a Poisson bracket.  Then $k^{\alg}\cap {\rm Frac}(A)$ is an algebraic extension $K$ of $k$.  In particular, we may replace $k$ by $K$ and replace $A$ by the $K$-subalgebra of ${\rm Frac}(A)$ generated by $K$ and $A$ if necessary and the resulting algebra will still have the property that $(0)$ is Poisson rational.  We may now take an uncountable algebraically closed extension $F$ of $k$ and invoke Lemma \ref{lem: AB} to show that the $F$-algebra $B:=A\otimes_k F$ has the property that $(0)$ is Poisson rational.  By Theorem~\ref{thm: weak}, $B$ has only finitely many height one prime ideals that are Poisson prime ideals.
We point out that it follows that $A$ can only have finitely many height one prime Poisson ideals.
Indeed, let $\{P_1,\ldots ,P_s\}$ be the set of height one prime ideals of $B$ that are Poisson.
By the ``going-down'' property for flat extensions, we see that $Q_i:=P_i\cap A$ must have height at most one in $A$.
So it suffices to show that every height one prime Poisson ideal $Q$ of $A$ is contained in some $P_i$; it will then have to be one of the nonzero $Q_i$ that occurs on this list. 
If $Q$ is a height one prime Poisson ideal of $A$ then the fact that $B$ is a free $A$-module gives that $(A/Q)\otimes_k F$ embeds in $B/QB$.  In particular, by Noether normalization, $B/QB$ contains a polynomial ring over $F$ in $d={\rm Kdim}(B)-1$ variables, where ${\rm Kdim}(B)$ denotes the Krull dimension of $B$, and hence has Krull dimension exactly ${\rm Kdim}(B)-1$.  Since $QB$ is a Poisson ideal, there is a height one prime ideal $Q'$ in $B$ that contains $QB$, which is necessarily a Poisson prime ideal by Lemma \ref{lem: decomp}.  Thus every height one prime Poisson ideal of $A$ is contained in some height one prime Poisson ideal of $B$, as desired.

We have thus proved:

\begin{theorem}
Let $k$ be a field of characteristic zero and $A$ an affine Poisson $k$-algebra.
Then a Poisson prime ideal $P$ of $A$ is rational if and only if the set of Poisson prime ideals $Q\supseteq P$ with ${\rm ht}(Q)={\rm ht}(P)+1$ is finite.
\label{thm: weak-k}
\end{theorem}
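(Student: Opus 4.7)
The plan is to reduce Theorem~\ref{thm: weak-k} to Theorem~\ref{thm: weak} (over uncountable algebraically closed fields of characteristic zero) by scalar extension. By passing to $A/P$, I may assume $P = (0)$ throughout.

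The direction ``finitely many height one Poisson primes $\Rightarrow (0)$ rational'' is the argument of Theorem~\ref{thm: weak}, which goes through verbatim over an arbitrary characteristic zero $k$: if $(0)$ were not rational, then a non-constant $f = a/b$ in the Poisson center of $\Frac(A)$ gives an infinite family $\{(a/b - \lambda)A_b : \lambda \in k\}$ of nontrivial principal Poisson ideals (infinite because $\mathrm{char}(k) = 0$), and a minimal prime over each is a height one Poisson prime by Krull's principal ideal theorem and Lemma~\ref{lem: decomp}. Distinct $\lambda$'s produce disjoint sets of such minimal primes, since a prime containing both $(a/b - \lambda_1)$ and $(a/b - \lambda_2)$ would contain the nonzero scalar $\lambda_1 - \lambda_2$. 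Contracting back to $A$ yields infinitely many height one Poisson primes of $A$.

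For the converse direction, the plan is to extend scalars via Lemma~\ref{lem: AB}. First replace $k$ by $K := k^{\alg} \cap \Frac(A)$ (an algebraic extension of $k$) and $A$ by the $K$-subalgebra of $\Frac(A)$ generated by $A$ and $K$; the result is still an affine $K$-algebra with $(0)$ Poisson rational, and now the hypothesis $K^{\alg} \cap \Frac(A) = K$ of Lemma~\ref{lem: AB} holds. Pick any uncountable algebraically closed extension $F$ of $K$ and set $B := A \otimes_K F$. Lemma~\ref{lem: AB} ensures $B$ is a domain in which $(0)$ is Poisson rational for the natural extension of the bracket. Since the proof of Theorem~\ref{thm: weak} relied on the base field only through its uncountability and algebraic closedness (used in Theorem~\ref{thm: main} via Proposition~\ref{prop: space}), it applies verbatim to $B$ over $F$, producing a finite list $P_1, \ldots, P_s$ of height one Poisson primes in $B$.

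The main obstacle, and the most delicate part of the argument, is descending this finiteness from $B$ to $A$. Flatness of $B$ over $A$ together with going-down ensures each contraction $Q_i := P_i \cap A$ has height at most one in $A$, so it suffices to show that every height one Poisson prime $Q$ of $A$ coincides with some $Q_i$. Given such a $Q$, the extension $QB$ is a Poisson ideal of $B$. Because $B$ is a free $A$-module, $B/QB \cong (A/Q) \otimes_K F$, which has Krull dimension $\mathrm{Kdim}(A) - 1 = \mathrm{Kdim}(B) - 1$ by Noether normalization. Catenarity of the affine $F$-algebra $B$ then provides a minimal prime $Q'$ of $B$ over $QB$ with $\mathrm{ht}(Q') = 1$; by Lemma~\ref{lem: decomp} this $Q'$ is Poisson, and since $Q \subseteq Q' \cap A$ with both height one primes of $A$, they coincide, placing $Q$ in the finite list $\{Q_1, \ldots, Q_s\}$.
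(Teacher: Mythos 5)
Your proposal is correct and follows the paper's proof essentially step for step: same reduction to $k^{\alg}\cap\Frac(A)=k$, same scalar extension via Lemma~\ref{lem: AB} and Theorem~\ref{thm: weak}, and the same descent argument using flatness, going-down, the Krull dimension of $B/QB$, and Lemma~\ref{lem: decomp}. If anything you are slightly more careful than the paper: you explicitly invoke catenarity to get a height one \emph{minimal} prime over $QB$ (which is what Lemma~\ref{lem: decomp} actually requires), and you correctly note that $B/QB$ is isomorphic to $(A/Q)\otimes_K F$ rather than merely containing it.
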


There is only remaining the issue of rationality implying primitivity (Theorem~\ref{thm: ratprim}).
Our proof here again uses, in an essential way, that $\mathbb C$ is uncountable.  We note, however, that the proof works in general for any uncountable field (see the remarks following the proof of Theorem~\ref{thm: ratprim}).
We are left therefore with the following open question:

\begin{question}
Suppose $k$ is a countable field of characteristic zero and $A$ an affine Poisson $k$-algebra.
Does rationality of a prime Poisson ideal $P$ imply that $P$ is primitive?
\end{question}

\section{The classical Dixmier-Moeglin equivalence}
\noindent
The counterexamples produced in this paper yield also counterexamples to the classical (noncommutative) Dixmier-Moeglin equivalence discussed in the introduction.  To explain this connection, we recall that given an associative ring $R$ equipped with a derivation $\delta$, one can form an associative \emph{skew polynomial ring} $R[x;\delta]$, which is an overring of $R$ with the property that it is a free left $R$-module with basis $\{x^n : n\ge 0\}$ and such that $xr=rx+\delta(r)$ for all $r\in R$.
Many ring theoretic properties of $R$ are inherited by $R[x;\delta]$; for example,  if $R$ is a domain then so is $R[x;\delta]$ (see \cite[Theorem 1.2.9 (i)]{McConnellRobson}) and if $R$ is left or right noetherian then so is $R[x;\delta]$ (see \cite[Theorem 1.2.9 (iv)]{McConnellRobson}).
Although this skew polynomial construction can be done for any associative ring $R$, we restrict our attention to case when $R$ is commutative.  The ideal structure of $R[x;\delta]$ is intimately connected to the structure of $\delta$-ideals in $R$; indeed, if $I$ is a $\delta$-ideal of $R$ then 
$IR[x;\delta]$ is easily checked to be a two-sided ideal of $R[x;\delta]$.
Using basic facts such as these, as well as some known results about $R[x;\delta]$, we show in Theorem \ref{dmex} below that if $(R,\delta)$ is as in Theorem \ref{diffex} then the skew polynomial ring $R[x;\delta]$ does not satisfy the Dixmier-Moeglin equivalence.  

One interesting feature of the ring $R[x;\delta]$ is that it has finite \emph{Gelfand-Kirillov dimension} whenever $R$ is a finitely generated commutative algebra over a field $k$. We recall that Gelfand-Kirillov dimension (GK-dimension, for short) is a noncommutative analogue of Krull dimension, which is defined as follows. Given a field $k$ and a finitely generated $k$-algebra $A$, a $k$-vector subspace $V\subseteq A$ is called a {\em generating subspace} if it is finite-dimensional, contains $1$, and generates $A$ as a $k$-algebra.
If this is the case we have
$$V\subseteq V^2\subseteq V^3 \subseteq \cdots\subseteq \bigcup_{n\ge 1} V^n = A$$
where $V^n$ denotes the subspace generated by products $v_1v_2\cdots v_n$ with $v_i\in V$.
The \emph{Gelfand-Kirillov dimension} of $A$ is then defined to be
$${\rm GKdim}(A):=\limsup_{n\to\infty} \frac{ \log({\rm dim}(V^n))}{\log\,n}.$$
This quantity is independent of the choice of generating subspace~\cite[Lemma~1.1]{krauselenagan}.  In practice, algebras often have a generating subspace $V$ for which ${\rm dim}(V^n)\sim Cn^d$ for some positive constant $C$ and some $d\ge 0$; in this case $d$ is the GK-dimension.
For a finitely generated commutative $k$-algebra, the Gelfand-Kirillov dimension and the Krull dimension coincide \cite[Theorem 4.5]{krauselenagan}.  

Noetherian noncommutative algebras that do not satisfy the classical Dixmier-Moeglin equivalence seem to be rare. There are very few examples of such algebras in the literature apart from those of Irving and Lorenz mentioned in the introduction, and these are of infinite GK-dimension.  To the best of our knowledge, the following result gives the first counterexamples in finite GK-dimension.

\begin{theorem}
\label{dmex}
With $(R,\delta)$ as in Theorem \ref{diffex}, the skew polynomial ring $R[x;\delta]$ is a noetherian ring of finite $GK$-dimension for which the Dixmier-Moeglin equivalence does not hold.
In particular, $(0)$ is a primitive (and hence rational) prime ideal of $R[x;\delta]$ that is not locally closed in the Zariski topology.
Moreover, for any natural number $n\ge 4$ there exists an example with GK-dimension $n$.
\end{theorem}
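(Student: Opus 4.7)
The plan is to handle the four assertions---noetherianity, finite GK-dimension with arbitrary value $n\geq 4$, failure of local closedness of $(0)$, and primitivity of $(0)$---in turn. Noetherianity follows directly from the cited Ore-extension fact. For finite GK-dimension I will invoke the standard formula $\operatorname{GKdim}(R[x;\delta])=\operatorname{GKdim}(R)+1=\operatorname{Kdim}(R)+1$; since Theorem~\ref{diffex} provides $R$ of any Krull dimension $\geq 3$, this simultaneously yields finiteness of GK-dimension and examples of every value $n\geq 4$. For the failure of local closedness, each nonzero prime $\delta$-ideal $P$ of $R$ gives a nonzero prime two-sided ideal $PR[x;\delta]$---primality comes from $R[x;\delta]/PR[x;\delta]\cong (R/P)[x;\bar\delta]$ being an Ore extension of a domain---and using that $R[x;\delta]$ is a free right $R$-module one has $\bigcap_P PR[x;\delta]=\big(\bigcap_P P\big)R[x;\delta]=(0)$ by Theorem~\ref{diffex}(ii).

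The main work is proving $(0)$ primitive. My plan is to build an explicit faithful simple left $R[x;\delta]$-module. First, pick a maximal ideal $\mathfrak{m}$ of $R$ whose largest $\delta$-ideal is $(0)$; such $\mathfrak{m}$ exist by the usual uncountability argument, since the hypothesis $\Frac(R)^\delta=\mathbb{C}$ from Theorem~\ref{diffex}(i) together with Lemma~\ref{lem: diffcenter} and Lemma~\ref{lem: decomp} forces, as in the proof of Theorem~\ref{thm: ratprim}, the minimal nonzero prime $\delta$-ideals of $R$ to form a countable family, and their associated proper Zariski closed subsets of $\operatorname{MaxSpec}(R)$ cannot cover $\operatorname{MaxSpec}(R)$ over the uncountable field $\mathbb{C}$. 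Next, define the ``Taylor expansion'' ring homomorphism $\phi\colon R\to\mathbb{C}[[T]]$ by $\phi(r)=\sum_{i\geq 0}\overline{\delta^i(r)}\,T^i/i!$ (overline denoting reduction mod $\mathfrak{m}$); by Leibniz this is a ring map, and its kernel equals the largest $\delta$-ideal in $\mathfrak{m}$, hence $(0)$. Extending by $x\mapsto d/dT$ yields an injective ring homomorphism $\Phi\colon R[x;\delta]\hookrightarrow\operatorname{End}_{\mathbb{C}}(\mathbb{C}((T)))$ under which $\mathbb{C}[[T]]$ becomes an $R[x;\delta]$-submodule of $\mathbb{C}((T))$. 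The candidate faithful simple module is then $M:=\mathbb{C}((T))/\mathbb{C}[[T]]$.

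The principal obstacle is verifying that $M$ is both faithful and simple. Faithfulness over $R[x;\delta]$ comes from analysing the leading Laurent coefficients of $\Phi(a)(T^{-n})$ as $n\to\infty$: if $a=\sum a_ix^i$ annihilates every $T^{-n}$ modulo $\mathbb{C}[[T]]$, an inductive examination of these coefficients forces $\phi(a_i)=0$ for all $i$, whence $a=0$ by injectivity of $\phi$. The crux is simplicity. Here the key point is that $\mathfrak{m}$ is not itself $\delta$-stable---otherwise $\mathfrak{m}$ would be a nonzero $\delta$-ideal contained in itself, contradicting our choice---so some $r\in\mathfrak{m}$ has $\delta(r)\notin\mathfrak{m}$, equivalently $\phi(r)=c_1T+c_2T^2+\cdots$ with $c_1\neq 0$. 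In the basis $\{\overline{T^{-n}}\}_{n\geq 1}$ of $M$, multiplication by $r$ is then a strictly upper-triangular shift operator with $c_1$ on the superdiagonal, so iterating $r$ on any nonzero $\bar\alpha=\sum_{n=k}^{K}\alpha_{-n}\overline{T^{-n}}$ (with $\alpha_{-K}\neq 0$) collapses it after $K-1$ applications to $\alpha_{-K}c_1^{K-1}\overline{T^{-1}}$, a nonzero scalar multiple of $\overline{T^{-1}}$; the action of $x=d/dT$ then recovers every $\overline{T^{-n}}$ from $\overline{T^{-1}}$, showing $M$ is cyclic and simple. The ``hence rational'' clause follows from the standard implication that primitive ideals are rational in noetherian algebras satisfying the mild hypotheses at play here.
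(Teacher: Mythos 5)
Your argument is correct, and it tracks the paper's proof for noetherianity, the GK-dimension formula, and the failure of local closedness (intersecting the prime two-sided ideals $PR[x;\delta]$ over all nonzero prime $\delta$-ideals $P$). Where you diverge is the primitivity of $(0)$, and the divergence is genuine. The paper proceeds by citation: from $\Frac(R)^\delta=\mathbb{C}$ it deduces that $(0)$ is Poisson rational, hence Poisson primitive, in the auxiliary Poisson algebra $R[z]$ with bracket $\{r,z\}=\delta(r)$; it then invokes a theorem of Jordan to transfer this to $\delta$-primitivity of $(0)$ in $R$, and a theorem of Goodearl--Warfield to pass from $\delta$-primitivity of $R$ to primitivity of $R[x;\delta]$. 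You instead locate a maximal ideal $\mathfrak m\subset R$ with trivial $\delta$-core by running the same uncountability/prime-decomposition argument that underlies Theorem~\ref{thm: ratprim} (this \emph{is} the $\delta$-primitivity of $(0)$, established directly rather than via Jordan), and then build an explicit faithful simple left $R[x;\delta]$-module: the formal Taylor map $\phi(r)=\sum\overline{\delta^i(r)}\,T^i/i!$ is a ring embedding $R\hookrightarrow\mathbb{C}[[T]]$ with kernel exactly the $\delta$-core of $\mathfrak m$, $x\mapsto d/dT$ extends it to $R[x;\delta]$, and $M=\mathbb{C}((T))/\mathbb{C}[[T]]$ is simple (any nonzero element is driven down to $\overline{T^{-1}}$ by an $r\in\mathfrak m$ with $\delta(r)\notin\mathfrak m$, and $x$ regenerates all of $M$ from $\overline{T^{-1}}$) and faithful (the leading-coefficient analysis you sketch does go through, since the coefficients $(n+i-1)!/(n-1)!$ are linearly independent polynomials of distinct degrees in $n$). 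In effect you are re-proving from scratch the special case of Goodearl--Warfield that the paper cites, via what is the standard ``exponential'' construction behind that theorem. Both routes are valid; the paper's is shorter and offloads the work to the literature, yours is longer but self-contained and makes the simple module completely explicit.
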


\begin{proof}
What Theorem \ref{diffex} gives us is a complex affine algebra $R$ equipped with a derivation $\delta$ such that the field of constants of $\big(\Frac(R),\delta\big)$ is $\mathbb C$, and
the intersection of all nontrivial prime $\delta$-ideals of $R$ is zero.
Given a nonzero prime $\delta$-ideal $P$, we have that 
$Q:=PR[x;\delta]$ is a two-sided ideal of $R[x;\delta]$.
The canonical morphism induces an isomorphism $R[x;\delta]/Q\cong (R/P)[x;\delta']$, where $\delta'$ is the derivation on $R/P$ induced by $\delta$.
Since $R/P$ is an integral domain, so is $(R/P)[x;\delta']$, and hence $Q$ is a nonzero prime ideal of $R[x;\delta]$.
Now, if $a$ is in the intersection of all $Q$'s obtained in this manner, then as $a$ can be written uniquely as $r_nx^n+\cdots+r_0$ for some $n\geq 0$ and $r_0,\dots, r_n\in R$, one sees that the all the $r_i$ must be contained in the intersection of all nontrivial prime $\delta$-ideals of $R$, which we know to be trivial.
It follows that the intersection of all nontrivial prime ideals of $R[x;\delta]$ is trivial, and hence $(0)$ is not locally closed in ${\rm Spec}(R[x;\delta])$.  

The fact that that the field of constants of $\big(\Frac(R),\delta\big)$ is $\mathbb C$ implies that in the commutative algebra $R[z]$ with Poisson bracket given by $\{r,s\}=0$ for $r,s\in R$ and $\{r,z\}=\delta(r)$, the prime ideal $(0)$ is Poisson rational, and hence Poisson primitive by Theorem~\ref{rationalimpliesprimitive}.
By a result of Jordan \cite[Theorem 4.2]{Jordan} it follows that $(0)$ is {\em $\delta$-primitive} in $R$; that is, there is some maximal ideal of $R$ that does not contain a nonzero $\delta$-ideal of $R$.
A result due to Goodearl-Warfield \cite[Corollary 3.2]{GoodWar} now gives that $(0)$ is primitive in $R[x;\delta]$.

Finally, if $R$ is of Krull dimension $m$ then the GK-dimension of $R$ is also $m$ \cite[Theorem~4.5]{krauselenagan}.
Hence the GK-dimension of $R[x;\delta]$ is $m+1$ (see \cite[Proposition~3.5]{krauselenagan}).
So $R[x;\delta]$ is indeed a noetherian and finite GK-dimensional counterexample to the Dixmier-Moeglin equivalence.
Since Theorem~\ref{diffex} gives us such an $R$ of Krull dimension $m$ for any $m\geq 3$, 
 we obtain an example with any integer GK-dimension greater than or equal to  $4$, as claimed.
\end{proof}

 It would be interesting to obtain additional counterexamples.
 More precisely, noting that the Poisson algebra $R[t]$ of Corollary \ref{counterexample} is the semiclassical limit of $R[x;\delta]$ (in the filtered/graded sense \cite[2.4]{Goodearl2}), it is natural to ask:

\begin{question} Do the Poisson algebras of Corollary~\ref{counterexample} admit other formal or algebraic deformations which do not satisfy the classical Dixmier-Moeglin equivalence?
\end{question}

%\bibliographystyle{plain}
%\bibliography{../poisson}

\def\cprime{$'$}

\end{document}